\def\draftdate{December 14, 2021}
\newcommand{\HFT}{\Fib(\tau)}
\newcommand{\HFC}{\Fib(\kappa)}
\newcommand{\HFf}{\Fib(f)}
\newcommand{\NR}{\aO}
\newcommand{\et}{\textup{\'et}}
\let\iso\cong
\let\sma\wedge
\newcommand{\smaL}{\sma}
\newcommand{\mystrut}{\vbox to 1em{\vss}}
\renewcommand{\to}{\mathchoice{\longrightarrow}{\rightarrow}{\rightarrow}{\rightarrow}}
\newcommand{\from}{\mathchoice{\longleftarrow}{\leftarrow}{\leftarrow}{\leftarrow}}
\newcommand{\overto}[1]{\xrightarrow{\,#1\,}}
\newcommand{\phat}{^{\scriptscriptstyle\wedge}_{p}}
\let\catsymbfont\mathcal
\newcommand{\aO}{{\catsymbfont{O}}}
\newcommand{\bC}{{\mathbb{C}}}
\newcommand{\bF}{{\mathbb{F}}}
\newcommand{\bFl}{{\mathbb{F}_{l}}}
\newcommand{\bbFl}{{\bar{\mathbb{F}}}_{l}}
\newcommand{\bbFlt}{{\bar{\mathbb{F}}}_{l}^{\times}}
\newcommand{\bS}{{\mathbb{S}}}
\newcommand{\bZ}{{\mathbb{Z}}}
\newcommand{\bZp}{{\mathbb{Z}_{p}}}
\newcommand{\bZpi}{\bQp/\bZp}
\newcommand{\bZpt}{{\mathbb{Z}^{\times}_{p}}}
\newcommand{\bQ}{{\mathbb{Q}}}
\newcommand{\bQS}{{\mathbb{Q}_{S}}}
\newcommand{\bQp}{{\mathbb{Q}_{p}}}
\newcommand{\bbQp}{{\bar{\mathbb{Q}}_{p}}}
\newcommand{\bQpab}{{\bar{\mathbb{Q}}_{p}^{\ab}}}
\newcommand{\bQpnr}{({\mathbb{Q}_{p}})^{\nr}}
\newcommand{\bQpt}{{\mathbb{Q}^{\times}_{p}}}
\newcommand{\oL}{\mathcal{L}}
\def\quickop#1{\expandafter\DeclareMathOperator\csname
#1\endcsname{#1}}
\numberwithin{equation}{section}
\newtheorem{thm}[equation]{Theorem}
\newtheorem{cor}[equation]{Corollary}
\newtheorem{prop}[equation]{Proposition}
\theoremstyle{definition}
\newtheorem{defn}[equation]{Definition}
\theoremstyle{remark}
\newtheorem{rem}[equation]{Remark}
\def\texorpdfstring#1#2{#1}\message{No texorpdfstring^^J}\fi
\newcommand{\term}[1]{\textit{#1}}
\begin{document}

\title%
[The eigensplitting of the fiber of the cyclotomic trace]
{The eigensplitting of the fiber of the cyclotomic trace for the
sphere spectrum}

\author{Andrew J. Blumberg}
\address{Department of Mathematics, Columbia University,
New York, NY \ 10027}
\email{blumberg@math.columbia.edu}
\thanks{The first author was supported in part by NSF grants DMS-1812064, DMS-2104420}
\author{Michael A. Mandell}
\address{Department of Mathematics, Indiana University,
Bloomington, IN \ 47405}
\thanks{The second author was supported in part by NSF grants DMS-1811820, DMS-2104348}
\email{mmandell@indiana.edu}

\date{\draftdate} 
\subjclass[2020]{Primary 19D10, 19D55, 19F05.}
\keywords{Adams operations, algebraic $K$-theory of spaces, cyclotomic trace, Tate-Poitou duality.}

\begin{abstract}
Let $p\in \mathbb Z$ be an odd prime.  We show that the fiber sequence
for the cyclotomic trace of the sphere spectrum $\mathbb S$ admits an
``eigensplitting'' that generalizes known splittings on $K$-theory and
$TC$.  We identify the summands in the fiber as the covers of $\mathbb
Z_{p}$-Anderson duals of summands in the $K(1)$-localized
algebraic $K$-theory of $\mathbb Z$.  Analogous results hold for the
ring $\mathbb Z$ where we prove that the $K(1)$-localized fiber
sequence is self-dual for $\mathbb Z_{p}$-Anderson duality,
with the duality permuting the summands by $i\mapsto p-i$ (indexed mod
$p-1$).  We explain an intrinsic characterization of the summand we
call $Z$ in the splitting $TC(\mathbb Z)^{\wedge}_{p}\simeq j \vee \Sigma
j'\vee Z$ in terms of units in the $p$-cyclotomic tower of
$\mathbb Q_{p}$.
\end{abstract}

\maketitle

\section{Introduction}

The algebraic $K$-theory of the sphere spectrum, $K(\bS)$, is an
object of basic and fundamental interest, relating geometric topology
and arithmetic. Celebrated work of Waldhausen establishes a
comparison between $K(\bS)$ and a stable space of $h$-cobordisms for the disk
$D^n$.  On the other hand, $K(\bS)$ is intimately related to 
$K(\bZ)$, the algebraic $K$-theory of the integers, which encodes
arithmetic invariants (e.g., Bernoulli numerators and denominators).
For instance, the natural map $K(\bS) \to K(\bZ)$ is a rational
equivalence, and the latter is understood rationally by old work of Borel.

At a prime $p$, our understanding of algebraic $K$-theory of ring
spectra relies on trace methods.  B\"okstedt, Hsiang, and Madsen
constructed a topological version of negative cyclic homology called
topological cyclic homology ($TC$) and a Chern character $K \to TC$,
the cyclotomic trace.  Following earlier work of Rognes, in a previous
paper we studied the homotopy groups of $K(\bS)$ in terms of the
cyclotomic trace and linearization maps: a basic theorem of Dundas
(building on work of Goodwillie and McCarthy) provides a homotopy
cartesian square 
\begin{equation*}\label{eq:dgmsquare}
\begin{gathered}
\xymatrix{
K(\bS)\phat \ar[r]\ar[d] & K(\bZ)\phat \ar[d] \\ TC(\bS)\phat \ar[r] & TC(\bZ)\phat, }
\end{gathered}
\end{equation*}
where the maps $K(\bS) \to K(\bZ)$ and $TC(\bS) \to TC(\bZ)$ are the
linearization maps induced by the unit map of $E_{\infty}$ ring
spectra $\bS \to \bZ$.

For the rest of the paper, we restrict to the case of $p$ an odd
prime.  In~\cite[5.3]{BM-KSpi}, the authors showed that the fiber
square above splits as a wedge of $p-1$ fiber squares of the form:
\[
\xymatrix{%
\epsilon_{i}K(\bS)\phat \ar[r]\ar[d] & \epsilon_{i}K(\bZ)\phat \ar[d] \\ \epsilon_{i}TC(\bS)\phat \ar[r] & \epsilon_{i}TC(\bZ)\phat }
\]
for $i=0,\dotsc,p-2$ (or better, numbered modulo $(p-1)$).  We
identify the spectra $\epsilon_{i}(-)$ more specifically below.  These
squares are exactly the summands that would result from an 
eigensplitting of the fiber square for an action of $\bF_{p}^{\times}$
via the Teichm\"uller character $\omega$ for a conjectural action of
$p$-adically interpolated Adams operations; see [ibid., \S5].
We refer to the summands in the $i$th square above as the
``$\omega^{i}$ eigenspectra'' even though such Adams operations have
not been constructed in this generality.  (If they do exist, the
$\omega^{i}$ eigenspectrum is the summand where the
$\bF_{p}^{\times}$-action in the stable category is given by
the character $\omega^{i}\colon \bF_{p}^{\times}\to \bZpt$ and
the action of $\bZpt$.)

As a formal consequence, the fiber of the cyclotomic trace for $\bS$,
or equivalently, for $\bZ$ also comes with an eigensplitting.  In this
paper, we identify the eigenspectra summands.
In~\cite{BM-ktpd}, the authors identified the fiber of the cyclotomic
trace $\Fib(\tau)$ in $K$-theoretic terms as the $(-3)$-connected cover of
$\Sigma^{-1}I_{\bZp}(L_{K(1)}K(\bZ))$, where $I_{\bZp}$ denotes the
$\bZp$-Anderson dual.  Taking the idea of eigenspectra seriously, the
natural conjecture is that the $\omega^{i}$ eigenspectra of
$\Fib(\tau)$ should be the corresponding eigenspectra of this Anderson
dual.  In the spirit of the identification in [ibid.], the
$\omega^{i}$ eigenspectrum of $\Fib(\tau)$ should correspond to the
$\bZp$-Anderson dual of the $\omega^{p-i}$ eigenspectrum of
$L_{K(1)}K(\bZ)$.  Our main theorem establishes this conjecture.

\begin{thm}\label{thm:main}
There is a canonical weak equivalence between the fiber of the map
$\epsilon_{i}\tau\colon \epsilon_{i}K(\bS)\phat\to
\epsilon_{i}TC(\bS)\phat$ and the $(-3)$-connected cover of
$\Sigma^{-1}I_{\bZp}L_{K(1)}(\epsilon_{p-i}K(\bZ)\phat)$. 
\end{thm}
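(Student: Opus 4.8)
The plan is to derive Theorem~\ref{thm:main} formally from the eigensplitting of the Dundas--Goodwillie--McCarthy square of~\cite[5.3]{BM-KSpi} together with the $K$-theoretic identification of $\Fib(\tau)$ from~\cite{BM-ktpd}, the one new point being to check that the two are compatible with a common $\bZpt$-action. First, since $p$ is odd, each corner of the square splits as a wedge of $p-1$ pieces cut out by the Teichm\"uller idempotents $\epsilon_{0},\dotsc,\epsilon_{p-2}$, and~\cite[5.3]{BM-KSpi} upgrades this to a wedge decomposition of the square itself. Because each corner is the iterated pullback of the other three, applying $\epsilon_{i}$ preserves the homotopy cartesian property, so the fiber of $\epsilon_{i}\tau\colon\epsilon_{i}K(\bS)\phat\to\epsilon_{i}TC(\bS)\phat$ is canonically the fiber of $\epsilon_{i}K(\bZ)\phat\to\epsilon_{i}TC(\bZ)\phat$, i.e.\ the $\omega^{i}$-eigensummand $\epsilon_{i}\Fib(\tau)$ of $\Fib(\tau)$. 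Thus Theorem~\ref{thm:main} is equivalent to the statement that, under the equivalence of~\cite{BM-ktpd} between $\Fib(\tau)$ and the $(-3)$-connected cover of $\Sigma^{-1}I_{\bZp}(L_{K(1)}K(\bZ))$, the summand $\epsilon_{i}\Fib(\tau)$ corresponds to the $(-3)$-connected cover of $\Sigma^{-1}I_{\bZp}(L_{K(1)}(\epsilon_{p-i}K(\bZ)\phat))$. Observe that $L_{K(1)}$, $\Sigma^{-1}$, and the formation of $(-3)$-connected covers commute with finite wedge sums, while $I_{\bZp}$ takes finite wedge sums to finite wedge sums contravariantly, so the right-hand side is automatically one of the $p-1$ eigensummands; the only content is identifying the index.

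To pin down the index I would carry the argument in the category of spectra with a $\bZpt$-action: the cyclotomic structure on $TC$ gives $K(\bS)\phat$ and $TC(\bS)\phat$ the action underlying~\cite{BM-KSpi}, the linearization maps and the square transport it to $K(\bZ)\phat$ and $TC(\bZ)\phat$, and on $L_{K(1)}K(\bZ)$ one has the Adams operations $\psi^{k}$, $k\in\bZpt$, which under the Quillen--Lichtenbaum equivalence $L_{K(1)}K(\bZ)\simeq K^{\et}(\bZ[1/p])\phat$ act on the weight-$n$ part --- the part built from $H^{*}_{\et}(\bZ[1/p],\bZp(n))$ --- through $k^{n}$, so that $\epsilon_{i}$ cuts out the weights congruent to $i$ modulo $p-1$. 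One must first check that on $K(\bZ)\phat$, hence on its $K(1)$-localization, the action of~\cite{BM-KSpi} agrees with these Adams operations; this is what makes $\epsilon_{p-i}K(\bZ)\phat$ in the statement meaningful. One must then check that the derivation of~\cite{BM-ktpd} upgrades to a statement about objects and maps equipped with coherent $\bZpt$-actions: that identification is assembled from the square, the \'etale and syntomic descriptions of $K(\bZ)\phat$ and $TC(\bZ)\phat$, and Tate--Poitou duality for the Galois cohomology of $\bZ[1/p]$, each a morphism respecting the Galois (equivalently Adams) actions. The single non-formal feature is that Tate--Poitou duality is not weight-preserving: it pairs $\bZp(n)$ with $\bZp(1-n)$, the ``$(1)$'' being the Tate twist, so weight --- and hence eigenindex modulo $p-1$ --- is sent to $n\mapsto 1-n$. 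Since $1-n\equiv p-n\pmod{p-1}$, this explains why the $\omega^{i}$-eigensummand of $\Sigma^{-1}I_{\bZp}(L_{K(1)}K(\bZ))$ is $\Sigma^{-1}I_{\bZp}$ of the $\omega^{p-i}$-eigensummand of $L_{K(1)}K(\bZ)$; passing to $(-3)$-connected covers, which respects the wedge decomposition, then yields Theorem~\ref{thm:main}, the canonical equivalence being the composite of the canonical identifications used along the way.

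I expect the main obstacle to be precisely this compatibility bookkeeping. The eigensplitting of~\cite{BM-KSpi} is built from the cyclotomic structure rather than from Adams operations --- which are not available on $TC(\bS)$ itself --- so proving that it restricts on the arithmetic corners to the genuine Adams/Galois eigensplitting, and that the whole derivation of~\cite{BM-ktpd} can be carried out $\bZpt$-equivariantly, is where the real work lies. Concretely, I would try to exhibit all four corners of the square, all its structure maps, and every intermediate object in the proof of~\cite{BM-ktpd} as carrying coherent $\bZpt$-actions --- the action on $K(\bZ)\phat$ and $TC(\bZ)\phat$ being the Adams one --- after which the decomposition into $p-1$ eigensummands is formal, and the only residual subtlety is keeping the sign of the Tate twist straight so that the index substitution is $i\mapsto p-i$ and not $i\mapsto -i$. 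A secondary, purely computational check is that the connectivity matches: that the homotopy groups of $\Sigma^{-1}I_{\bZp}(L_{K(1)}K(\bZ))$ in the low negative degrees discarded by the $(-3)$-connected cover are distributed over the eigensummands in the manner forced by the known connectivity of $\Fib(\tau)$ --- again a formal consequence of the identification of~\cite{BM-ktpd} once it is made equivariant.
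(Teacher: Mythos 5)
The gap is in your second and third paragraphs: the mechanism you propose for pinning down the index --- carrying the whole argument in a category of spectra with coherent $\bZpt$-actions, with the action on $K(\bS)\phat$ and $TC(\bS)\phat$ ``coming from the cyclotomic structure'' and agreeing with Adams operations on the arithmetic corners --- is not available, and the paper is explicit about this. The eigensplitting of \cite[5.3]{BM-KSpi} is \emph{not} produced by a $\bZpt$-action on $K(\bS)$ or $TC(\bS)$; $p$-adically interpolated Adams operations on $K(\bS)$ are conjectural, their compatibility with the known operations on $TC$ is conjectural, and the introduction notes that even stable-category-level operations would not suffice to prove the theorem. Likewise there is no established $\bZpt$-equivariant (let alone weight-twisted, accounting for the Tate twist) refinement of the duality equivalence of \cite{BM-ktpd}. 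So the step you yourself flag as ``where the real work lies'' is not a compatibility check one can carry out; it is essentially an open construction, and your Tate-twist computation $n\mapsto 1-n\equiv p-n$ is a correct heuristic for the permutation but not a proof.

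The paper avoids equivariance entirely. Both splittings of $\Fib(\kappa)$ are already in hand: $J'\vee X_{0}\vee\dotsb\vee X_{p-2}$ from the diagonal behavior of $\kappa$ on the Dwyer--Mitchell summands (Corollary~\ref{cor:split}), and $\Sigma^{-1}I_{\bZp}J\vee\bigvee\Sigma^{-1}I_{\bZp}Y_{j}$ from Theorem~\ref{thm:globalduality}. The index matching is then forced by a vanishing computation: since $Y_{j}$ is the fiber of a map between wedges of $\Sigma^{2j-1}L$, its Anderson dual is built from copies of $\Sigma^{-2j}L$ and $\Sigma^{-2j+1}L$, while $[X_{i},\Sigma^{m}L]=0$ unless $m\equiv 2i-1$ or $2i-2$ mod $2(p-1)$ (Proposition~\ref{prop:mapsfromX}); hence $[X_{i},\Sigma^{-1}I_{\bZp}Y_{j}]=0$ unless $i+j\equiv 1$ mod $p-1$, and the $J$, $J'$, $X_{0}$, $X_{1}$ summands are handled using $Y_{0}\simeq *$, $X_{1}\simeq *$ and the explicit low-degree homotopy. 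Any equivalence between the two wedges is therefore automatically diagonal with the permutation $i\mapsto p-i$, which is what yields Theorem~\ref{thm:main} (together with the separate identification of the $J'$ summand in Theorem~\ref{thm:stdmap}). Your first paragraph (reducing to the $\epsilon_{i}$-summand of $\Fib(\kappa)$ and to identifying indices) matches the paper; the proof of the identification needs to be replaced by an argument of this concrete kind rather than an appeal to equivariant structure that has not been constructed.
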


We also describe how the duality map of~\cite{BM-ktpd}
interacts with the fiber sequence; we discuss these additional results
in Section~\ref{sec:fiberdual}, after reviewing terminology and notation.

Theorem~\ref{thm:main} is consistent with an expansive picture of the
behavior of the conjectural $p$-adically interpolated Adams operations
on $K(\bS)$.  In particular, it is natural to conjecture compatibility
with the (known) $p$-adic Adams operations on $TC(\bS)$ as well as
multiplicative properties.  Given such operations on the
$\infty$-category level, Theorem~\ref{thm:main} would follow.
However, while the existence of such Adams operations on the stable
category level is enough to obtain a splitting on $\Fib(\tau)$, it
would not be enough to deduce Theorem~\ref{thm:main} without
additional arguments like the ones below.

While this paper obviously builds on the authors' previous work
~\cite{BM-Anil,BM-KSpi,BM-ktpd}, we have tried to make it as
self-contained as possible, with specific citations to any facts
needed from those papers.

\subsection*{Conventions}

We use the term ``stable category'' to refer to the homotopy category
of spectra with its structure as a tensor triangulated
category.  The symbol $\smaL$ denotes the smash product in the stable
category.

Some of the statements and results below involve precise
accounting for signs.  For this we use the following
conventions: suspension is $(-)\sma S^{1}$ and cone is $(-)\sma I$,
where in the latter case we use $1$ as the basepoint.  Cofiber
sequences are sequences isomorphic (in the stable category) to Puppe
sequences formed in the usual way using this suspension and cone.  A
cofiber sequence leads to a long exact sequence of homotopy groups; we
use the sign convention that for a map $f\colon A\to B$, the
connecting map $\pi_{n}Cf\to \pi_{n-1}A$ in the long exact sequence of
homotopy groups is $(-1)^{n}\sigma^{-1}$ composed with the Puppe
sequence map $\pi_{n}Cf\to \pi_{n}\Sigma A$, where $\sigma$
denotes the suspension isomorphism $\pi_{n-1}A\to \pi_{n}\Sigma A$.
(Consideration of the example of standard cells explains the
desirability of the sign.)

We form the homotopy fiber $\HFf$ of a map $f$ using the space
of paths starting from the basepoint; we then have a canonical map
$\Sigma \HFf\to Cf$ in the usual way (using the suspension coordinate to
follow the path and then follow the cone).  
We switch between fiber sequences and cofiber sequences at will, using
the convention that for the fiber sequence
\[
\Omega B\overto{\delta} \HFf\overto{\phi} A\overto{f} B,
\]
the sequence
\[
\HFf\overto{\phi} A\overto{f} B\overto{-\Sigma \delta\circ \epsilon^{-1}}\Sigma \HFf
\]
is a cofiber sequence where $\epsilon\colon \Sigma \Omega B\to B$ is
the counit of the $\Sigma$, $\Omega$ adjunction.  For the long exact
sequence of homotopy groups associated to a fiber sequence, we use the
long exact sequence of homotopy groups of the associated cofiber
sequence.  In terms of the fiber sequence displayed above, the
connecting map $\pi_{n+1}B\to \pi_{n}\HFf$ is the composite of the
canonical isomorphism $\pi_{n+1}B\iso \pi_{n}\Omega B$ and the map
$(-1)^{n}\pi_{n}\delta$.

For a cofiber sequence 
\[
A\overto{f}B\overto{g}C\overto{h} \Sigma A
\]
and a fixed spectrum $X$, the sequence
\[
\Omega F(A,X)\overto{-h^{*}}F(C,X)\overto{g^{*}}F(B,X)\overto{f^{*}}F(A,X)
\]
is a fiber sequence and
\[
F(C,X)\overto{g^{*}}F(B,X)\overto{f^{*}}F(A,X)\overto{h^{*}}\Sigma F(C,X)
\]
is a cofiber sequence.

\subsection*{Acknowledgments}
The authors thank Adebisi Agboola, Brian Conrad, Mirela Ciperiani,
Bill Dwyer, Samit Dasgupta, Mike Hopkins, Lars Hesselholt, Mahesh
Kakde, Bjorn Poonen, John Rognes, and Matthias Strauch for helpful
conversations or remarks.  We are grateful to the referee for many
small improvements and corrections.

\section{A review of the eigensplitting of the cyclotomic trace}
\label{sec:review}

In this section, we review the splitting constructed in~\cite{BM-KSpi}
of the cofiber sequence 
\begin{equation}\label{eq:fibnl}
\HFT\to K(\bZ)\phat\overto{\tau} TC(\bZ)\phat\to \Sigma \HFT.
\end{equation}
It can be useful to express this in purely $K$-theoretic terms, using
the Hesselholt-Madsen results that $TC(\bZ)\phat\to TC(\bZp)\phat$
is a weak equivalence \cite[Add.~5.2]{HM2} and that $K(\bZp)\phat\to
TC(\bZp)\phat$ is a connective cover \cite[Th.~D]{HM2}.  Thus, we can
just as well identify the connective cover of $\HFT$ as the fiber of the 
map $K(\bZ)\phat\to K(\bZp)\phat$ induced by the completion map
$\bZ\to \bZp$.  As we will recall below, little information is lost by
working in the $K(1)$-local 
category and hence by studying the cofiber sequence 
\begin{equation}\label{eq:fibloc}
\HFC\to L_{K(1)}K(\bZ)\overto{\kappa} L_{K(1)}K(\bZp)\to \Sigma \HFC.
\end{equation}
We will switch back and forth between discussing the localized
sequence~\eqref{eq:fibloc} and non-localized sequence~\eqref{eq:fibnl}.

We begin by reviewing notation for some of the basic building blocks
of the splitting.  Let $KU\phat$ denote $p$-completed complex periodic
$K$-theory, and let $L$ denote the $p$-complete Adams summand:  
\[
KU\phat \simeq L\vee \Sigma^{2}L\vee \cdots \vee \Sigma^{2p-4}L.
\]
Let $J=L_{K(1)}\bS$.  If we choose an integer $l$ which
multiplicatively generates the units of $\bZ/p^{2}$, then $J$ is
weakly equivalent to the homotopy fiber of $1-\psi^{l}\colon L\to L$
for the Adams operation $\psi^{l}$.  In deducing $p$-complete results
from $K(1)$-local results, we write $ku\phat$, $\ell$, and $j$ for the
connective covers respectively of $KU\phat$, $L$, and $J$.  

The main theorem of Dwyer-Mitchell~\cite{DwyerMitchell} (as
reinterpreted in \cite[\S2]{BM-KSpi}) produces a canonical splitting
of $L_{K(1)}K(\bZ)$ as a certain wedge of $K(1)$-local spectra
\begin{equation}\label{eq:DM-split-KZ}
L_{K(1)}K(\bZ)\simeq J\vee Y_{0}\vee \cdots \vee Y_{p-2},
\end{equation}
where $Y_{i}$ is characterized by the property that $L^{*}(Y_{i})$ is
concentrated in degrees congruent to $2i-1$ mod $2(p-1)$ with
$L^{2i-1}(Y_{i})$ defined as an $L^{0}L$-module in terms of a certain
abelian Galois group.  Because $L^{2i-1}(Y_{i})$ is a finitely
generated $L^{0}L$-module of projective dimension $1$, $Y_{i}$ is the
homotopy fiber of a map between wedges of copies of $\Sigma^{2i-1}L$
(which on $L^{2i-1}(-)$ give a projective $L^{0}L$-resolution of 
$L^{2i-1}(Y_{i})$).  From this it follows that $\pi_{*}Y_{i}$ is
concentrated in degrees congruent to $2i-1$ and $2i-2$ mod $2(p-1)$;
moreover, it is free (as a $\bZp$-module) in odd degrees.  As
$\pi_{*}J$ is concentrated in degrees congruent to $-1\equiv 2p-3$ mod
$2(p-1)$ and degree $0$, any particular homotopy group of
$L_{K(1)}K(\bZ)$ involves only at most a single $Y_{i}$ and possibly
$J$. The following two results from~\cite{BM-KSpi} (q.v.~(2.8) and the
preceding paragraph) simplify certain arguments. 

\begin{prop}\label{prop:ymo}
$Y_{0}\simeq *$.
\end{prop}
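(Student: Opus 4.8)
**Proof proposal for Proposition \ref{prop:ymo} ($Y_0 \simeq *$).**

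The plan is to show that $Y_0$ is trivial by proving that its $L$-cohomology vanishes, since $Y_0$ is a $K(1)$-local spectrum and hence is determined by $L^*(Y_0)$ (indeed $Y_0$ is built as a homotopy fiber of a map between wedges of suspensions of $L$, so if $L^*(Y_0)=0$ then that map is an equivalence and $Y_0\simeq *$). By the characterization recalled just before the statement, $L^*(Y_0)$ is concentrated in degrees congruent to $-1$ mod $2(p-1)$, with $L^{-1}(Y_0)$ given as an explicit $L^0L$-module in terms of a certain abelian Galois group attached to $\bZ[1/p]$ (or $\bQ$) in the $\omega^0$-eigenspace. So the whole proposition reduces to the claim that this particular Galois-theoretic module vanishes in the $i=0$ (trivial Teichmüller twist) component.

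First I would unwind the Dwyer--Mitchell description (as reinterpreted in \cite[\S2]{BM-KSpi}) to identify $L^{-1}(Y_0)$ with the $\omega^0$-isotypic piece of the relevant Iwasawa module --- concretely, something like the Galois group of the maximal abelian pro-$p$ extension of $\bQ(\mu_{p^\infty})$ unramified outside $p$, twisted appropriately, with its $\bZp^\times$-action, restricted to the $\omega^0=1$ eigenspace. The key input is then a standard fact from Iwasawa theory: the $\omega^0$-component of this module is controlled by the $p$-adic $L$-function $L_p(s,\omega^0)$, and because $\omega^0$ is the trivial character, the relevant eigenspace is well known to be trivial (this is essentially the statement that there are no nontrivial everywhere-unramified-outside-$p$ abelian $p$-extensions in the trivial eigencomponent --- equivalently the classical fact underlying why the "$J$ summand" and not a $Y_0$ summand carries the trivial-character information). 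In the language of \cite{BM-KSpi}, this is the assertion that the trivial-eigenspace contribution has already been absorbed into $J=L_{K(1)}\bS$, leaving nothing for $Y_0$.

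The main obstacle is bookkeeping rather than depth: one must match conventions carefully so that the "degree $0$ vs. degree $-1$" indexing and the Teichmüller-twist indexing used here line up with the Galois-module indexing in \cite{DwyerMitchell} and \cite{BM-KSpi}, and in particular confirm that it is genuinely the $i\equiv 0$ summand (and not, say, $i\equiv p-1$) whose cohomology one needs to see vanish. Once the correct eigenspace is pinned down, the vanishing itself is immediate from the cited structural results. I would therefore organize the proof as: (1) recall the explicit $L^0L$-module structure of $L^{-1}(Y_i)$ from \cite[\S2]{BM-KSpi}; (2) specialize to $i=0$ and identify the module with the trivial-eigenspace Iwasawa/Galois module; (3) cite the vanishing of that eigenspace; (4) conclude $L^*(Y_0)=0$, hence $Y_0\simeq *$ since $Y_0$ is a fiber of an equivalence between wedges of suspensions of $L$.
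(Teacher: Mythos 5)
The paper does not actually prove this proposition: it is quoted from \cite{BM-KSpi} (see (2.8) there and the preceding paragraph), where it is established as part of the Dwyer--Mitchell analysis of $L_{K(1)}K(\bZ)$. Your reduction is the right shape and matches that route: $Y_0$ is determined by the $L^0L$-module $L^{-1}(Y_0)$, so it suffices to show that module vanishes, and then $Y_0\simeq *$ follows since $Y_0$ is the fiber of a map of wedges of copies of $\Sigma^{-1}L$ realizing a projective resolution. The gap is in the arithmetic step, which is the entire content of the proposition.

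Your justification of the vanishing is both incorrect as stated and circular. It is not true that ``there are no nontrivial abelian $p$-extensions unramified outside $p$ in the trivial eigencomponent'': the cyclotomic $\bZp$-extension of $\bQ$ is exactly such an extension, and it is precisely the piece of $\pi_{-1}L_{K(1)}K(\bZ)\iso H^1_{\et}(\Spec\bZ[1/p];\bZp)\iso\bZp$ carried by the summand $J$ (this is used in Section 5 of the paper). Consequently the parenthetical fallback --- ``the trivial-eigenspace contribution has already been absorbed into $J$, leaving nothing for $Y_0$'' --- is a restatement of $Y_0\simeq *$, not an argument for it. Likewise the heuristic ``controlled by $L_p(s,\omega^0)$, trivial character, hence trivial'' is not a citable fact: $L_p(s,\omega^0)$ is the $p$-adic zeta function, whose relevant values $\zeta(1-m)$ for $m\equiv 0\pmod{p-1}$ are non-units (they have negative $p$-adic valuation by von Staudt--Clausen), and that non-integrality exactly accounts for the torsion in $\pi_*J$ rather than forcing a module to vanish. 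What your step (3) must actually supply, after the $J$-part is split off, is a genuine (classical but nontrivial) input: in cohomological terms, that $H^1_{\et}(\Spec\bZ[1/p];\bZp(m))$ in twists $m\equiv 0\pmod{p-1}$ is exactly what $\pi_*J$ accounts for and that $H^2_{\et}(\Spec\bZ[1/p];\bZp(m))=0$ for such $m$; in Iwasawa-theoretic terms, the vanishing of the correct eigenspace of the relevant (twisted) module, which rests on facts such as Iwasawa's theorem that $p\nmid h(\bQ_n)$ for every layer of the cyclotomic $\bZp$-extension of $\bQ$ (equivalently the vanishing of the $\omega^0$/$\omega^1$ class-group eigenspaces), together with control of the unit/local-unit contribution. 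None of this follows from the character being trivial, and pinning down which eigenspace of which twisted module is at stake --- the bookkeeping you defer --- is inseparable from choosing the right classical theorem to cite. As written, the proposal cites a vanishing statement that is false in the form given, so the key step is missing.
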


\begin{prop}\label{prop:yz}
$L^{1}Y_{1}$ is a free $L^{0}L$-module of rank 1, and so $Y_{1}$ is
(non-canonically) weakly equivalent to $\Sigma L$.
\end{prop}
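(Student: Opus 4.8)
The plan is to isolate one arithmetic input and deduce the rest formally: it suffices to show that $L^{1}Y_{1}$ is free of rank one over $L^{0}L$. Granting that, write $Y_{1}$ as the homotopy fiber of a map $f\colon A\to B$ between finite wedges of copies of $\Sigma L$ whose effect on $L^{1}$ is a two-term free resolution $0\to L^{1}B\overto{f^{*}}L^{1}A\to L^{1}Y_{1}\to 0$, as in the paragraph before Proposition~\ref{prop:ymo}. If $L^{1}Y_{1}$ is free then this short exact sequence of finite free $L^{0}L$-modules splits, so after a change of bases $f^{*}$ is the inclusion of a direct summand $(L^{0}L)^{n}\hookrightarrow L^{0}L\oplus(L^{0}L)^{n}$. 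Because $[\,\bigvee^{m}\Sigma L,\bigvee^{n}\Sigma L\,]$ is identified, via $L^{1}(-)$, with the ring of $m\times n$ matrices over the operation ring $L^{0}L$ (using that the wedges are finite), those base changes are realized by self-equivalences of $A$ and $B$, and $f$ itself is recovered from $f^{*}$; hence $A\simeq \Sigma L\vee B$ with $f$ the projection, and $Y_{1}=\Fib(f)\simeq \Sigma L$. The non-canonicity in the statement is exactly the choice of splitting, equivalently of a generator of the free module $L^{1}Y_{1}$ up to $(L^{0}L)^{\times}$.

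It remains to identify $L^{1}Y_{1}$ and see that it is free of rank one. Since $L^{*}J\cong \pi_{-*}L$ is concentrated in degrees divisible by $2(p-1)$ and $L^{*}Y_{i}$ is concentrated in degrees $\equiv 2i-1$ mod $2(p-1)$, the only summand of $L_{K(1)}K(\bZ)$ contributing in degree $1$ is $Y_{1}$, so $L^{1}Y_{1}\cong L^{1}(L_{K(1)}K(\bZ))$; this is the $L^{0}L$-module produced by the Dwyer--Mitchell description \cite{DwyerMitchell} (as reinterpreted in \cite[\S2]{BM-KSpi}) out of an abelian Galois group of the $p$-cyclotomic tower of $\bQ$. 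Under the standard identification of $L^{0}L$ with the $\omega^{1}$-component of $\bZp[[\bZpt]]$, i.e.\ with the Iwasawa algebra $\Lambda\cong\bZp[[T]]$, this module is the corresponding eigencomponent of the Iwasawa module governing $H^{*}_{\et}(\bZ[1/p],\bZp(\ast))$, and the claim is that for $\bQ$ this eigencomponent is $\Lambda$-free of rank one; base changing a free rank-one $\Lambda$-module to $L^{0}L$ then finishes the reduction.

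The main obstacle is precisely this last assertion: over the two-dimensional regular local ring $\Lambda$, being free is strictly stronger than being of rank one, and the gap is genuine arithmetic. The rank-one statement is soft --- it records $r_{1}+r_{2}=1$ for $\bQ$, equivalently Borel's computation $\operatorname{rank}K_{2k-1}(\bZ)=1$ for $k\equiv 1$ mod $p-1$. Freeness amounts to that eigencomponent being \emph{cyclic}, and this reflects that $\bQ$ has class number one and only one archimedean place, so that in this eigencomponent the tower is generated by the single $p$-unit $p$ (equivalently $H^{1}_{\et}(\bZ[1/p],\bZp(k))\cong\bZp$ coherently in $k$, with no finite submodule since $H^{0}$ vanishes here); a cyclic module of rank one over the domain $\Lambda$ is free of rank one. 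All of this is classical and is carried out in \cite{DwyerMitchell,BM-KSpi}, so for the present purpose one only needs to quote it.
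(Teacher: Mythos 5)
The paper itself offers no proof of Proposition~\ref{prop:yz}: it is quoted verbatim from \cite{BM-KSpi} ((2.8) and the preceding paragraph), which rests on the Dwyer--Mitchell identification of $L^{1}(L_{K(1)}K(\bZ))$ with an Iwasawa module together with classical Iwasawa theory for $\bQ$; your write-up is consistent with that treatment, since your formal step (freeness of $L^{1}Y_{1}$ splits the length-one free resolution, the base change is realized by self-equivalences of the finite wedges of $\Sigma L$, and hence $Y_{1}\simeq\Sigma L$) is correct and the arithmetic core is correctly isolated and then deferred to the same sources the paper cites. One caution about your heuristic for freeness: ``only one archimedean place'' enters only the rank count, whereas the unconditional input for cyclicity is that $\bQ$ has class number one and a unique, totally ramified prime above $p$ in the cyclotomic tower, which kills the relevant eigencomponent of the unramified Iwasawa module by Nakayama; equivalently, the norm-coherent system built from $\zeta_{n}-1$ (whose norm at the bottom is $p$) generates the eigencomponent, which is exactly the mechanism behind Proposition~\ref{prop:zerogen} and Remark~\ref{rem:Y1} in the paper.
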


We say more about the relationship of $Y_{1}$ and $\Sigma L$ in
Remark~\ref{rem:Y1} in Section~\ref{sec:Zi}.
 
Let $y_{i}$ be the $1$-connected cover of $Y_{i}$
for all $i$; then
\[
K(\bZ)\phat \simeq j\vee y_{0}\vee \cdots \vee y_{p-2}.
\]
We have a similar canonical splitting of 
$L_{K(1)}K(\bZp)$ that takes the form
\[
L_{K(1)}K(\bZp) \simeq J \vee \Sigma J' \vee Z_{0}\vee \cdots
\vee Z_{p-2}
\]
where $Z_{i}$ is non-canonically weakly equivalent to
$\Sigma^{2i-1}L$ and $J'$ is the
$K(1)$-localization of the Moore spectrum $M_{\bZpt}$ for the units of
$\bZp$, $J':=L_{K(1)}M_{\bZpt}$.  Alternatively, $J'$ is canonically
weakly equivalent to $(J\smaL M_{\bZpt})\phat$; it is non-canonically
weakly equivalent to $J$ since $(M_{\bZpt})\phat$ is non-canonically
equivalent to $\bS\phat$.  %
The spectra $Z_{i}$ were denoted $\Sigma^{-1}L_{TC}(i)$ in
\cite{BM-KSpi} for the non-canonical weak equivalence $Z_{i}\simeq
\Sigma^{-1}L(i)=\Sigma^{2i-1}L$. The $Z_{i}$ admit a canonical
description in terms of the units of cyclotomic extensions of $\bQp$,
which we review in Section~\ref{sec:Zi}.

Let $j'$ be the connective cover of $J'$, and for $i\neq 0$,
let $z_{i}$ be the $1$-connected cover of $Z_{i}$; let $z_{0}$ be the
$(-2)$-connected cover of $Z_{0}$.  Then
\[
TC(\bZ)\phat \simeq j \vee \Sigma j'\vee z_{0}\vee \cdots \vee z_{p-2}.
\]
A key result proved in
\cite[3.1]{BM-KSpi} is that the cyclotomic trace and the completion
map are diagonal on the corresponding pieces.

\begin{thm}[{\cite[3.1]{BM-KSpi}}]\label{thm:diag}
In the notation above, the cyclotomic trace $\tau\colon K(\bZ)\phat\to
TC(\bZ)\phat$ decomposes as the wedge of
the identity map $j\to j$ and maps $y_{i}\to z_{i}$ for
$i=0,\ldots,p-2$; the completion map $\kappa\colon L_{K(1)}K(\bZ)\to
L_{K(1)}K(\bZp)$ decomposes as the wedge 
of the identity map $J\to J$ and maps $Y_{i}\to Z_{i}$ for
$i=0,\ldots,p-2$.  The composite with the projection to the summands $\Sigma j'$ and
$\Sigma J'$ is the trivial map for $\tau$ and $\kappa$, respectively.
\end{thm}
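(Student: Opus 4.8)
The plan is to prove the statement about $\kappa$ first, working entirely in the $K(1)$-local category, and then deduce the statement about $\tau$ by passing to connective (or appropriately coconnective) covers, using the already-recorded fact that $TC(\bZ)\phat \to TC(\bZp)\phat$ is an equivalence and $K(\bZp)\phat \to TC(\bZp)\phat$ is a connective cover, together with the splittings we have set up. Since every $Y_i$ and $Z_i$ has $L$-homology concentrated in a single residue class mod $2(p-1)$, the natural first move is to understand $\kappa$ through its effect on $L$-cohomology: a map of $K(1)$-local spectra out of $L_{K(1)}K(\bZ)$ is detected by the induced map of $L^*L$-modules on $L^*(-)$, because each summand is a finite-length extension built from wedges of suspensions of $L$, and $L^*(-)$ is faithful on such objects. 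So the real content is to show that on $L$-cohomology, $\kappa^*$ carries $L^*(Z_i)$ into $L^*(Y_i)$, carries the $J$-summand identically onto the $J$-summand, and kills the $\Sigma J'$ factor; this reduces everything to a computation with the $L^0L$-modules $L^{2i-1}(Y_i)$ and $L^{2i-1}(Z_i)$ defined via the abelian Galois groups of Dwyer--Mitchell.

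The key steps, in order: (1) Recall from \cite[\S2]{BM-KSpi} the explicit description of the splittings \eqref{eq:DM-split-KZ} and of $L_{K(1)}K(\bZp)$ as eigenspace decompositions, where the summand labels are read off from congruence classes mod $2(p-1)$ in $L^*(-)$; since distinct summands live in distinct residue classes, any map must be "block-diagonal" up to the interaction with the single ambient $J$ (degrees $0$ and $-1$) and $\Sigma J'$ (degrees $1$ and $0$). (2) Observe that the completion map $\bZ \to \bZp$ induces $\kappa$, and that on the Galois-theoretic data underlying Dwyer--Mitchell the effect of completion is the natural map from the global cohomology/Iwasawa module for $\bZ[1/p]$ to the local one for $\bQp$; identify this with the map $L^{2i-1}(Z_i) \to L^{2i-1}(Y_i)$ at the level of $L^0L$-modules, confirming it is the diagonal component. (3) For the $J$-summand: both $L_{K(1)}K(\bZ)$ and $L_{K(1)}K(\bZp)$ receive the unit map from $J = L_{K(1)}\bS$, and $\kappa$ commutes with these units (the unit factors through linearization $\bS \to \bZ \to \bZp$), so the $J \to J$ component is the identity. (4) For the $\Sigma J'$-summand: $\Sigma J'$ sits in degrees $0$ and $1$; the possible nonzero target residue classes from the source summands $J$ (degrees $0,-1$) and the $Y_i$ are constrained, and the only way $\Sigma J'$ could receive a nonzero map is from $J$ or $Y_1$ in degree $0$; rule this out by a direct check on $\pi_0$ or $\pi_1$ using the known structure of $\pi_* L_{K(1)}K(\bZ)$ versus $\pi_* \Sigma J'$, i.e.\ that the $\Sigma J'$ factor is detected by a homotopy class that is not in the image of $\kappa$ (concretely, $\pi_1 \Sigma J' \cong \bZpt$ is the units-of-$\bZp$ factor, which is not hit from global $K$-theory). (5) Finally, pass from \eqref{eq:fibloc} back to \eqref{eq:fibnl}: take connective covers to get the $\tau$-statement, noting $j = $ connective cover of $J$, $y_i$ and $z_i$ the specified covers of $Y_i$ and $Z_i$, and $\Sigma j'$ the connective cover of $\Sigma J'$; the diagonal decomposition and the triviality onto $\Sigma j'$ are inherited from the $K(1)$-local statement since covers are functorial.

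The main obstacle I expect is step (2) together with step (4): precisely matching the Galois-theoretic $L^0L$-module map induced by $\bZ \to \bZp$ with the identity-type component $Y_i \to Z_i$ requires unwinding the Dwyer--Mitchell identification carefully (the global Iwasawa module surjects onto, or maps compatibly to, the semilocal one at $p$), and one must be careful that no hidden extension lets a class in the source $J$ or $Y_1$ leak into the $\Sigma J'$ factor — the spectra are not formal, so the argument that $\kappa$ respects the splitting cannot be purely at the level of homotopy groups in a single degree and must either invoke the $L$-cohomology faithfulness argument of step (1) or an explicit naturality statement from \cite{BM-KSpi}. If the needed naturality is already packaged in \cite{BM-KSpi} (as the parenthetical citation to (2.8) and the surrounding discussion suggests), then this reduces to bookkeeping; otherwise one has to reprove the compatibility of the Dwyer--Mitchell splitting with the localization/completion map, which is where the real work lies.
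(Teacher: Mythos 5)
First, a point of comparison: the paper itself does not prove this statement --- it is imported verbatim from \cite[3.1]{BM-KSpi} --- so your sketch has to be judged against the argument that lives there. Much of your bookkeeping is sound: because $L^{*}(Y_{i})$ is concentrated in degrees $\equiv 2i-1 \pmod{2(p-1)}$ and $Z_{j}\simeq \Sigma^{2j-1}L$, the groups $[Y_{i},Z_{j}]$ vanish for $j\not\equiv i$, and similar degree counts kill the components $Y_{i}\to J$, $J\to Z_{j}$, and even $Y_{i}\to \Sigma J'$ (e.g.\ $[Y_{1},\Sigma J']\iso[L,J']=0$). That is all you actually need from $L$-cohomology; the claim that $L^{*}(-)$ is ``faithful'' on these objects is both false in general and unnecessary, and your step (2) (identifying the diagonal components $Y_{i}\to Z_{i}$ with a Galois-theoretic map) is more than the theorem asserts --- the content is only that the off-diagonal components vanish and that $J\to J$ is the identity.

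The genuine gap is the component $J\to \Sigma J'$ (equivalently $j\to \Sigma j'$), which is exactly the one \emph{not} excluded by degree reasons: $[J,\Sigma J']\iso \bZp\neq 0$, with a generator given by the composite $J\to L\to \Sigma J'$, and any nonzero such map is detected on $\pi_{0}$, not on $\pi_{1}$ (indeed $\pi_{1}J=0$). So your step (4), which proposes to rule out leakage into $\Sigma J'$ by noting that $\pi_{1}\Sigma J'\iso(\bZpt)\phat$ is ``not hit from global $K$-theory,'' cannot see this component at all; moreover ``not hit'' is a statement about the image of $\kappa$, whereas what must be shown is that the composite of $\kappa$ restricted to the $J$ summand with the projection to $\Sigma J'$ is null. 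On $\pi_{0}$ the unit of $L_{K(1)}K(\bZp)$ is certainly in the image, so the argument has to come from knowing how the unit $J=L_{K(1)}\bS\to L_{K(1)}K(\bZp)$ sits relative to the Dwyer--Mitchell splitting of local $K$-theory --- that it factors through the $J$ wedge summand and misses $\Sigma J'$, with a compatible retraction giving the identity on $J\to J$. That compatibility is precisely the nontrivial input supplied in \cite[3.1]{BM-KSpi}; without establishing it your proof does not close. Your step (5) (passing to connective covers for the statement about $\tau$) is fine in outline, but it too needs low-degree care (e.g.\ $z_{0}$ is the $(-2)$-connected cover and the $\Sigma j'$ summand lives in degrees $0$ and $1$) rather than bare functoriality of covers.
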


It follows that the cofiber sequences of equations~\eqref{eq:fibnl}
and~\eqref{eq:fibloc} decompose into wedges of cofiber sequences.  To
explain this, we need to introduce some notation.

\begin{defn}\label{defn:X}
Let $X_{i}$ and $x_{i}$ denote the homotopy fiber of the maps $Y_{i}\to
Z_{i}$ and $y_{i}\to z_{i}$, respectively.  
\end{defn}

We have the following analogues of Propositions~\ref{prop:ymo}
and~\ref{prop:yz} for $X_{1}$ and $X_{0}$.  The first is an immediate
consequence of Proposition~\ref{prop:ymo}; the second is a restatement of
\cite[4.4]{BM-KSpi}, which asserts (in the notation here) that
$Y_{1}\to Z_{1}$ is a weak equivalence.

\begin{prop}\label{prop:X0}
The connecting map $Z_{0}\to \Sigma X_{0}$ is a weak equivalence; in
particular, $X_{0}$ is (non-canonically) weakly equivalent to
$\Sigma^{-2}L$. 
\end{prop}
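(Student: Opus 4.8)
The plan is to observe that Proposition~\ref{prop:X0} is essentially formal once Proposition~\ref{prop:ymo} is available. Recall from Definition~\ref{defn:X} that $X_{0}$ is the homotopy fiber of the map $Y_{0}\to Z_{0}$, and that associated to this fiber sequence there is a cofiber sequence $X_{0}\to Y_{0}\to Z_{0}\overto{\partial}\Sigma X_{0}$ in which $\partial$ is precisely the connecting map under discussion.

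First I would invoke Proposition~\ref{prop:ymo} to replace $Y_{0}$ by $*$. Then the map $Y_{0}\to Z_{0}$ becomes the map $*\to Z_{0}$, whose homotopy fiber is $\Omega Z_{0}$; in particular $X_{0}\simeq \Omega Z_{0}=\Sigma^{-1}Z_{0}$. The cofiber sequence above then reads $\Omega Z_{0}\to *\to Z_{0}\overto{\partial}\Sigma \Omega Z_{0}$, and since the middle term is contractible, $\partial$ is a weak equivalence: up to the sign dictated by the conventions of the introduction, it is the canonical equivalence $Z_{0}\simeq \Sigma\Omega Z_{0}=\Sigma X_{0}$. This proves the first assertion. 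For the second, recall that $Z_{0}$ is non-canonically weakly equivalent to $\Sigma^{2\cdot 0-1}L=\Sigma^{-1}L$; combining this with $X_{0}\simeq \Sigma^{-1}Z_{0}$ gives $X_{0}\simeq \Sigma^{-2}L$, non-canonically.

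There is no genuine obstacle here — the whole argument is formal given Proposition~\ref{prop:ymo}. The only point requiring minor care is keeping track of the sign in the connecting map $\partial$ according to the conventions fixed above, but this is irrelevant to the conclusion that $\partial$ is a weak equivalence and that $X_{0}\simeq \Sigma^{-2}L$.
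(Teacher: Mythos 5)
Your proposal is correct and is exactly the paper's argument: the paper dismisses Proposition~\ref{prop:X0} as an immediate consequence of Proposition~\ref{prop:ymo}, and your write-up simply spells out that formal deduction (contractible total term $Y_{0}$ forces the connecting map $Z_{0}\to\Sigma X_{0}$ to be an equivalence, and then $Z_{0}\simeq\Sigma^{-1}L$ gives $X_{0}\simeq\Sigma^{-2}L$ non-canonically).
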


\begin{prop}\label{prop:X1}
$X_{1}\simeq *$
\end{prop}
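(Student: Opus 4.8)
The plan is to deduce Proposition~\ref{prop:X1} directly from the second assertion attributed to \cite[4.4]{BM-KSpi} in the sentence preceding Proposition~\ref{prop:X0}, namely that the map $Y_{1}\to Z_{1}$ appearing in Theorem~\ref{thm:diag} is a weak equivalence. By Definition~\ref{defn:X}, $X_{1}$ is \emph{defined} to be the homotopy fiber of this map, so the homotopy fiber of a weak equivalence is contractible and $X_{1}\simeq *$. In this sense the statement is essentially a restatement, and the only real content is the pointer to the earlier result.

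If instead one wanted an argument that does not simply cite \cite[4.4]{BM-KSpi} verbatim, I would argue as follows. First I would recall from the discussion around~\eqref{eq:DM-split-KZ} that $L^{*}(Y_{1})$ is concentrated in degrees congruent to $1$ mod $2(p-1)$, and by Proposition~\ref{prop:yz} that $L^{1}Y_{1}$ is free of rank $1$ over $L^{0}L$, so that $Y_{1}\simeq \Sigma L$ non-canonically. Similarly $Z_{1}$ is non-canonically $\Sigma L$, with $L^{*}(Z_{1})$ concentrated in the same degrees and $L^{1}Z_{1}$ again free of rank $1$ over $L^{0}L$ (this is the content of the structural description of the $Z_{i}$ recalled just before Definition~\ref{defn:X}, specialized to $i=1$). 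Then I would show that the map $Y_{1}\to Z_{1}$ from Theorem~\ref{thm:diag} induces an isomorphism on $L^{1}(-)$: since source and target are both free rank-$1$ $L^{0}L$-modules, it suffices to check that the induced $L^{0}L$-module map is surjective, or equivalently a unit multiple of the identity, which one reads off from the explicit $K$-theoretic/Iwasawa-theoretic description of the two modules (both being built from the relevant abelian Galois groups, and the map $\kappa$ being the completion map $\bZ\to\bZp$, which is an isomorphism on the $\omega^{1}$-eigenspace of the relevant units by the classical computation underlying $K(\bZ)\to K(\bZp)$ in this weight). Once $L^{1}(Y_{1}\to Z_{1})$ is an isomorphism, it is an isomorphism on all of $L^{*}(-)$ (both sides being concentrated in the same degrees and zero elsewhere), hence a $K(1)$-local equivalence since $L$-homology detects equivalences of $K(1)$-local spectra built as fibers of maps of wedges of suspensions of $L$. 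Therefore $X_{1}=\HFf(Y_{1}\to Z_{1})\simeq *$.

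The main obstacle in the self-contained version is the arithmetic input: identifying the map $L^{1}Y_{1}\to L^{1}Z_{1}$ precisely enough to see it is an isomorphism rather than merely a nonzero map between abstractly isomorphic modules. This is exactly where \cite[4.4]{BM-KSpi} does the work, via the Dwyer--Mitchell description of $L^{*}(Y_{1})$ in terms of the Galois group of the maximal abelian pro-$p$ extension of $\bQ(\zeta_{p})$ unramified outside $p$ on the $\omega^{1}$-eigenspace, compared with the corresponding local object for $\bQp$; the $\omega^{1}$-component is governed by the classical fact that the relevant global and local units agree in this weight (relatedly, the vanishing $Y_{0}\simeq *$ of Proposition~\ref{prop:ymo} and the rank-$1$ statement of Proposition~\ref{prop:yz} are the neighboring pieces of the same computation). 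Accordingly, the cleanest and intended proof is the one-line deduction: Proposition~\ref{prop:X1} follows immediately from Definition~\ref{defn:X} together with \cite[4.4]{BM-KSpi}, which states that $Y_{1}\to Z_{1}$ is a weak equivalence.
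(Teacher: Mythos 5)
Your main argument is exactly the paper's: Proposition~\ref{prop:X1} is stated there as a restatement of \cite[4.4]{BM-KSpi} (that $Y_{1}\to Z_{1}$ is a weak equivalence), and since $X_{1}$ is defined in Definition~\ref{defn:X} as the homotopy fiber of that map, $X_{1}\simeq *$ follows immediately. The additional self-contained sketch you offer is not needed and, as you note yourself, its essential arithmetic content is precisely what \cite[4.4]{BM-KSpi} supplies.
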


As a consequence of Definition~\ref{defn:X} we have that 
\[
\HFT\simeq j'\vee x_{0}\vee\dotsb \vee x_{p-2}\qquad \text{and}\qquad 
\HFC\simeq J'\vee X_{0}\vee\dotsb \vee X_{p-2}.
\]
Because the maps in the definition are only defined in the
stable category, the splitting of the fibers do not automatically give
a canonical splitting of the fiber sequences.  However, 
looking at the cofiber sequence
\[
\Sigma^{-1}Z_{i}\to X_{i}\to Y_{i}\to Z_{i}
\]
we see that $X_{i}$ can have $L$-cohomology only in degrees congruent
to $2i-1$ and $2i-2$ mod $2(p-1)$, or equivalently:

\begin{prop}\label{prop:mapsfromX}
For $i=0,\dotsc,p-2$, $[X_{i},\Sigma^{j}L]=0$ unless $j\equiv 2i-1$ or $j\equiv 2i-2$ mod
$2(p-1)$.
\end{prop}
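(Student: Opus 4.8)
The plan is to compute $[X_i, \Sigma^j L]$ directly from the defining cofiber sequence
\[
\Sigma^{-1}Z_{i}\to X_{i}\to Y_{i}\overto{} Z_{i}
\]
by mapping it into $\Sigma^j L$ and invoking the long exact sequence in $[-,\Sigma^j L]$. Applying $F(-,L)$ to this cofiber sequence (shifted) gives a six-term exact sequence
\[
[Z_{i},\Sigma^{j}L]\to [Y_{i},\Sigma^{j}L]\to [X_{i},\Sigma^{j}L]\to [Z_{i},\Sigma^{j+1}L]\to [Y_{i},\Sigma^{j+1}L].
\]
So it suffices to show that for $j\not\equiv 2i-1, 2i-2 \pmod{2(p-1)}$ the two outer groups $[Z_i,\Sigma^jL]$ and $[Y_i,\Sigma^jL]$ vanish, and similarly that $[Z_i,\Sigma^{j+1}L]$ vanishes (so that the middle group is squeezed between zeros); note that if $j\not\equiv 2i-1, 2i-2$ then $j+1\not\equiv 2i-2, 2i-1$, so $[Z_i,\Sigma^{j+1}L]$ is also one of the groups we are claiming to vanish.

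First I would record that $L^{*}(Z_i)$ is concentrated in degrees congruent to $2i-1$ mod $2(p-1)$: this follows from the non-canonical equivalence $Z_i\simeq\Sigma^{2i-1}L$ together with the fact that $L^{*}L = L^{*}(L)$ is concentrated in degrees congruent to $0$ mod $2(p-1)$ (the Adams summand is $2(p-1)$-periodic and $L^{*}L$ sits in those degrees). Hence $[Z_i,\Sigma^{j}L]=L^{j}(Z_i)=0$ unless $j\equiv 2i-1 \pmod{2(p-1)}$. Second, I would recall from the discussion preceding Proposition~\ref{prop:ymo} — specifically the stated property that $L^{*}(Y_{i})$ is concentrated in degrees congruent to $2i-1$ mod $2(p-1)$ — that $[Y_i,\Sigma^{j}L]=L^{j}(Y_i)=0$ unless $j\equiv 2i-1 \pmod{2(p-1)}$ as well. (Here I am using that $Y_i$ and $L$ are $K(1)$-local, so mapping spectra and $L$-cohomology agree; the hypotheses of the splitting~\eqref{eq:DM-split-KZ} give exactly this concentration.)

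Now I assemble the argument. Fix $j$ with $j\not\equiv 2i-1$ and $j\not\equiv 2i-2$ mod $2(p-1)$. Then $j\not\equiv 2i-1$, so $[Y_i,\Sigma^jL]=0$ and $[Z_i,\Sigma^jL]=0$; and $j+1\not\equiv 2i-1$ (since $j\not\equiv 2i-2$), so $[Z_i,\Sigma^{j+1}L]=0$. The exact sequence displayed above then forces $[X_i,\Sigma^jL]=0$, as desired. This is essentially the computation already sketched in the paragraph preceding the statement, so the "proof" is really just spelling out that sketch via the long exact sequence; there is no serious obstacle, the only point requiring care being to confirm that $[-,\Sigma^jL]$ on $K(1)$-local spectra computes $L$-cohomology, which is immediate since $L$ is $K(1)$-local and $X_i$, $Y_i$, $Z_i$ are $K(1)$-local spectra (being built from summands of $K(1)$-localizations). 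If one prefers to avoid even mentioning $L$-cohomology, one can work throughout with the groups $[-,\Sigma^jL]$ directly and cite Proposition~\ref{prop:yz}/the splitting only through the concentration statements for $Y_i$ and the equivalence $Z_i\simeq\Sigma^{2i-1}L$.
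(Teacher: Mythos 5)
Your proposal is correct and is exactly the paper's argument: the paper also deduces the vanishing from the cofiber sequence $\Sigma^{-1}Z_{i}\to X_{i}\to Y_{i}\to Z_{i}$, using that $L^{*}(Y_{i})$ is concentrated in degrees $\equiv 2i-1$ and $L^{*}(\Sigma^{-1}Z_{i})$ in degrees $\equiv 2i-2$ mod $2(p-1)$. You have simply spelled out the long exact sequence that the paper leaves as a one-line observation.
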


From this we see that $[X_{i},\Sigma^{-1}Z_{j}]=0$ for $j\neq i$ and
$[x_{i},\Sigma^{-1}z_{j}]=0$ for $j\neq i$.  The latter is clear from
Proposition~\ref{prop:X0} in the case $i=0$ and in the remaining cases
follows from the isomorphisms
\[
[x_{i},\Sigma^{-1}z_{j}]\iso
[x_{i},\Sigma^{-1}Z_{j}]\iso [X_{i},\Sigma^{-1}Z_{j}].
\]
The first isomorphism holds since $x_{i}$ is $0$-connected for
$i=1,\dotsc,p-2$ while 
$\Sigma^{-1}z_{j}\to \Sigma^{-1}Z_{j}$ is a weak equivalence on
$0$-connected covers for all $j$.  The
second isomorphism holds because $X_{i}$ is the $K(1)$-localization of
$x_{i}$ and $\Sigma^{-1}Z_{j}$ is $K(1)$-local. We also note that
$[J',\Sigma^{-1}Z_{j}]=0$ and $[j',\Sigma^{-1}Z_{j}]=0$ for all $j\neq
1$ since $\pi_{0}\Sigma^{-1}Z_{j}=0$.  We have that
$[J',\Sigma^{-1}Z_{1}]\iso \bZp$ (non-canonically) and so
$[j',\Sigma^{-1}Z_{1}[0,\infty)]\iso \bZp$ with the isomorphism
induced by $\pi_{0}$:
\[
[j',\Sigma^{-1}Z_{1}[0,\infty)]\to
\Hom(\pi_{0}j',\pi_{0}\Sigma^{-1}Z_{1})=\Hom((\bZpt)\phat,\bZp)\iso \bZp.
\]
Since $\Sigma^{-1} z_{1}$ is the fiber of the map $\Sigma^{-1}Z_{1}[0,\infty)\to
H\bZp$ that induces the identification $\pi_{1}Z_{1}=\bZp$, we see
that $[j',\Sigma^{-1}z_{1}]=0$.  In particular, we have shown that all of the
indeterminacy in the map of fiber sequences is zero (in the
non-localized case), and we get the following consequence.

\begin{cor}\label{cor:split}
The cofiber sequence~\eqref{eq:fibnl} splits canonically as a wedge of 
the cofiber sequences
\[
\xymatrix@C-.5pc@R-1.5pc{%
\relax*\ar[r]&j\ar[r]^{=}&j\ar[r]&\relax *\\
j'\ar[r]&\relax *\ar[r]&\Sigma j'\ar[r]^{=}&\Sigma j'\\
x_{0}\ar[r]&y_{0}\ar[r]&z_{0}\ar[r]&\Sigma x_{0}\\
\vdots&\vdots&\vdots&\vdots\\
x_{p-2}\ar[r]&y_{p-2}\ar[r]&z_{p-2}\ar[r]&\Sigma x_{p-2}.
}
\]
The cofiber sequence~\eqref{eq:fibloc} splits canonically as an
analogous wedge in terms of $J$, $J'$, $X_i$, $Y_i$, and $Z_i$.
\end{cor}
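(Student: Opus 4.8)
The plan is to assemble the splitting of the cofiber sequences from the diagonal decomposition of $\tau$ and $\kappa$ in Theorem~\ref{thm:diag}, using the vanishing of maps established in the paragraph preceding the statement to rigidify the choices. First I would recall that Theorem~\ref{thm:diag} identifies $\tau$ as a wedge $\id_{j}\vee\bigvee_{i}(y_{i}\to z_{i})$ composed with the inclusion of $j\vee\bigvee z_{i}$ into $TC(\bZ)\phat\simeq j\vee\Sigma j'\vee\bigvee z_{i}$, and likewise $\kappa$ in the $K(1)$-local setting. Taking fibers summand-by-summand produces the candidate wedge of cofiber sequences displayed, with the summand $j'$ (resp.\ $J'$) arising as the fiber of $*\to\Sigma j'$ (resp.\ $*\to\Sigma J'$) and with $x_{i}=\HFf[y_{i}\to z_{i}]$, $X_{i}=\HFf[Y_{i}\to Z_{i}]$ by Definition~\ref{defn:X}. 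The content is that this wedge of cofiber sequences is \emph{equivalent}, as a cofiber sequence, to~\eqref{eq:fibnl} (resp.~\eqref{eq:fibloc}).

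The main obstacle, and the reason the statement is not purely formal, is that a map of cofiber sequences in the stable category is determined by only two of its three vertical maps together with a compatibility, and a priori the induced map on fibers is not unique: two splittings of $\HFT$ refining the same splittings of $K(\bZ)\phat$ and $TC(\bZ)\phat$ differ by a self-map of $\HFT\simeq j'\vee\bigvee x_{i}$ that is compatible with the (split) maps to $K(\bZ)\phat$ and from $\Sigma^{-1}TC(\bZ)\phat$. So the key step is to show this indeterminacy group vanishes. I would organize this exactly as in the paragraph before the corollary: the off-diagonal indeterminacy is controlled by groups of the form $[x_{i},\Sigma^{-1}z_{j}]$ and $[j',\Sigma^{-1}z_{j}]$ (and their $K(1)$-local analogues), and one checks each is zero. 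For $j\neq i$ with $i\neq 0$ this follows from Proposition~\ref{prop:mapsfromX} via the chain of isomorphisms $[x_{i},\Sigma^{-1}z_{j}]\iso[x_{i},\Sigma^{-1}Z_{j}]\iso[X_{i},\Sigma^{-1}Z_{j}]$, using connectivity of $x_{i}$ and that $X_{i}=L_{K(1)}x_{i}$; the case $i=0$ uses Proposition~\ref{prop:X0}; the $j'$-terms use $\pi_{0}\Sigma^{-1}Z_{j}=0$ for $j\neq 1$ and, for $j=1$, the explicit computation that the map $[j',\Sigma^{-1}Z_{1}[0,\infty)]\to\Hom((\bZpt)\phat,\bZp)$ is an isomorphism while $\Sigma^{-1}z_{1}$ is the fiber of the map to $H\bZp$ inducing $\pi_{1}Z_{1}=\bZp$, forcing $[j',\Sigma^{-1}z_{1}]=0$. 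With all indeterminacy vanishing, the map of cofiber sequences from the wedge to~\eqref{eq:fibnl} built from the splittings of the two middle and outer terms is unique and, being a weak equivalence on two out of three terms hence on all three, exhibits the claimed canonical splitting.

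For the $K(1)$-local sequence~\eqref{eq:fibloc} the same argument applies verbatim: replace lower-case letters by upper-case throughout, use that $\Sigma^{-1}Z_{j}$ is $K(1)$-local so the localization $[X_{i},\Sigma^{-1}Z_{j}]\iso[x_{i},\Sigma^{-1}Z_{j}]$ holds on the nose, and replace the $j'$-computation by its evident analogue with $J'=L_{K(1)}M_{\bZpt}$ and $\pi_{0}J'\iso(\bZpt)\phat$. Finally, I would note that canonicity is inherited from the canonicity of the splittings~\eqref{eq:DM-split-KZ} and its $TC$-counterpart together with the canonicity in Theorem~\ref{thm:diag}: once the comparison map of cofiber sequences is unique, it is canonical. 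The only genuine calculation is the vanishing of the $[j',\Sigma^{-1}z_{1}]$ term, which is the one place connectivity and the precise $\pi_{*}$-computations recalled above are used, and which I expect to be the crux.
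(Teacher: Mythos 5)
Your treatment of the integral sequence~\eqref{eq:fibnl} is essentially the paper's own argument: Theorem~\ref{thm:diag} makes $\tau$ diagonal, and canonicity of the induced splitting of the cofiber sequence is reduced to the vanishing of $[x_{i},\Sigma^{-1}z_{j}]$ for $j\neq i$ and of $[j',\Sigma^{-1}z_{j}]$ for all $j$, which you prove exactly as the paper does (Proposition~\ref{prop:mapsfromX} together with the chain $[x_{i},\Sigma^{-1}z_{j}]\iso[x_{i},\Sigma^{-1}Z_{j}]\iso[X_{i},\Sigma^{-1}Z_{j}]$, Proposition~\ref{prop:X0} for $i=0$, and the $\pi_{0}$ computation handling $[j',\Sigma^{-1}z_{1}]$).

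However, your claim that the $K(1)$-local case follows ``verbatim,'' replacing the $j'$-computation by its ``evident analogue,'' does not go through. The evident analogue would be $[J',\Sigma^{-1}Z_{1}]=0$, and that group is in fact isomorphic to $\bZp$ (noncanonically): $\Sigma^{-1}Z_{1}\simeq L$ and maps out of $J'$ into a $K(1)$-local spectrum are detected by $\Hom((\bZpt)\phat,\pi_{0})$, which here is $\bZp\neq 0$ --- this is exactly the point the paper records just before the corollary. Your integral vanishing of $[j',\Sigma^{-1}z_{1}]$ used the connective covers in an essential way ($\Sigma^{-1}z_{1}$ is the fiber of $\Sigma^{-1}Z_{1}[0,\infty)\to H\bZp$), and there is no $K(1)$-local counterpart of that step; so the uniqueness-of-filler argument does not literally apply to~\eqref{eq:fibloc}, and this is why the paper asserts the vanishing of all indeterminacy only ``in the non-localized case.'' The statement for~\eqref{eq:fibloc} is nonetheless true, and the gap is fixable: either obtain the localized splitting by applying $L_{K(1)}$ to the integral one (using that $L_{K(1)}$ preserves cofiber sequences and that $\HFC\simeq L_{K(1)}\HFT$, $X_{i}\simeq L_{K(1)}x_{i}$, $J'\simeq L_{K(1)}j'$), or argue directly that the potentially nonzero $[J',\Sigma^{-1}Z_{1}]$ term cannot contribute because the $J'$-component of any comparison map is pinned down by compatibility with the connecting map on the $\Sigma J'$ summand of $L_{K(1)}K(\bZp)$. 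As written, though, the $K(1)$-local half of your proof fails at exactly this point.
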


\section{Review of arithmetic duality in algebraic \texorpdfstring{$K$}{K}-theory}\label{sec:local-review}

In~\cite{BM-ktpd}, we identified the fiber of the cyclotomic trace
using a spectral lift of global arithmetic duality.  Relating this
identification to the splittings described above requires some of the
details of the global duality map and a closely related local duality
map.  We give a short and mostly self-contained review of the
construction in this section.

In arithmetic, local duality is an isomorphism
\[
H^{i}_{\et}(k;M)\overto{\iso} (H^{2-i}_{\et}(k;M^{*}(1)))^{*}
\]
where $k$ is the field of fractions of a complete discrete valuation
ring whose residue field is finite (e.g., a finite extension of
$\bQp$), $M$ is a finite Galois module, and $(-)^{*}$ denotes the
Pontryagin dual.  A version of this duality holds in algebraic
$K$-theory where it takes the following form.  (For a proof,
see~\cite[1.4]{BM-ktpd}.) 

\begin{thm}[$K$-Theoretic Local Duality]
\label{thm:localduality}
Let $k$ be the field
of fractions of a complete discrete valuation ring whose residue field
is finite.  The map 
\[
L_{K(1)}K(k) \to I_{\bQ/\bZ}(L_{K(1)}K(k) \smaL M_{\bZpi})\simeq I_{\bZp}(L_{K(1)}K(k))
\]
adjoint to the composite map
\[
L_{K(1)}K(k)\smaL L_{K(1)}K(k)\smaL M_{\bZpi}\to I_{\bQ/\bZ}\bS
\]
described below is a weak equivalence.
\end{thm}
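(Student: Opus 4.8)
The plan is to deduce the statement from classical local Tate duality by Galois descent. Thomason's descent theorem identifies $L_{K(1)}K(k)$ with the \'etale $K$-theory of $k$, so that (using Suslin rigidity, which gives $L_{K(1)}K(\bar k)\simeq KU\phat$) one has $L_{K(1)}K(k)\simeq (KU\phat)^{hG_k}$, where the absolute Galois group $G_k$ acts on $\pi_{2t}KU\phat=\bZp(t)$ through the $t$th power of the cyclotomic character, and $\pi_*L_{K(1)}K(k)$ is computed by the strongly convergent descent spectral sequence
\[
E_2^{s,t}=H^s_{\et}(k;\bZp(t))\Longrightarrow \pi_{2t-s}L_{K(1)}K(k).
\]
Since $k$ is the fraction field of a complete discrete valuation ring with finite residue field, $k$ has $p$-cohomological dimension at most $2$, so $E_2$ is concentrated in columns $s=0,1,2$; the only possibly nonzero differential, $d_2$, acts on column $0$, where $H^0_{\et}(k;\bZp(t))$ vanishes for $t\neq 0$ and is generated by the unit (a permanent cycle) for $t=0$, so $d_2=0$ and the spectral sequence collapses at $E_2$. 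This gives each $\pi_n L_{K(1)}K(k)$ a canonical two-step filtration; dually, $\pi_* I_{\bZp}(L_{K(1)}K(k))$ carries a two-step universal-coefficient filtration with graded pieces $\Ext^1_{\bZp}(\pi_{-n-1}L_{K(1)}K(k),\bZp)$ and $\Hom_{\bZp}(\pi_{-n}L_{K(1)}K(k),\bZp)$.

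The next step is to analyze the pairing. The ring multiplication of $L_{K(1)}K(k)$ respects the descent filtration and induces cup product $H^s\otimes H^{s'}\to H^{s+s'}$ on $E_2$, while the trace $L_{K(1)}K(k)\smaL M_{\bZpi}\to I_{\bQ/\bZ}\bS$ corresponds, under the identification $[W,I_{\bQ/\bZ}\bS]=\Hom(\pi_0W,\bQ/\bZ)$, to the composite of reduction along the descent filtration with the invariant isomorphism of local class field theory:
\[
\pi_0\bigl(L_{K(1)}K(k)\smaL M_{\bZpi}\bigr)\twoheadrightarrow H^2_{\et}(k;\bZpi(1))\overto{\mathrm{inv}}\bZpi\subseteq\bQ/\bZ.
\]
The smash with $M_{\bZpi}$ and the suspensions in the adjunction account for the passage to $p$-primary torsion coefficients, the cohomological shift by $2$, and the Bott twist by $(1)$. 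Using the asserted equivalence $I_{\bZp}(-)\simeq I_{\bQ/\bZ}(-\smaL M_{\bZpi})$ and matching the two filtrations, one finds that the map of the theorem induces on associated gradeds exactly the local Tate pairing
\[
H^s_{\et}(k;\bZp(t))\times H^{2-s}_{\et}(k;\bZpi(1-t))\longrightarrow H^2_{\et}(k;\bZpi(1))=\bZpi;
\]
equivalently, since the weight-$t$ summand $H^*_{\et}(k;\bZp(t))$ contributes in homotopical degree $2t$ and classical local duality gives $R\Gamma_{\et}(k;\bZp(t))^{\vee_{\bZp}}\simeq \Sigma^2 R\Gamma_{\et}(k;\bZp(1-t))$, the Anderson dual exchanges the weights $t\leftrightarrow 1-t$, the degree shift $2t\leftrightarrow 2(1-t)$ being compensated exactly by the suspension $\Sigma^2$. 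Since this pairing is perfect and both filtrations are finite, the map is an isomorphism on associated gradeds, hence a $\pi_*$-isomorphism, hence a weak equivalence.

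The main difficulty is bookkeeping rather than ideas: one must check that the duality map genuinely interchanges the descent and universal-coefficient filtrations with the correct indexing (so that $H^s$ is paired with $H^{2-s}$), that the trace recovers the fundamental class on the nose rather than a nonzero multiple of it, and that the various suspensions, Bott twists, and the passage from $\bQ/\bZ$ to its $p$-primary part (effected by $M_{\bZpi}$) are consistent, with signs. A more structural route --- essentially that of \cite[1.4]{BM-ktpd} --- avoids these spectral-sequence manipulations by establishing the duality first in the derived category of continuous $\bZp(\ast)$-linear $G_k$-sheaves on $\Spec k$, i.e.\ by lifting Tate's local duality for the profinite group $G_k$ (a Poincar\'e-duality group of dimension $2$ at $p$ with dualizing module $\bZp(1)$) to the level of $KU\phat$-module homotopy fixed points, and only then passing to $K(1)$-local algebraic $K$-theory.
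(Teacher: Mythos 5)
The paper does not actually prove this statement here: it is quoted with a pointer to \cite[1.4]{BM-ktpd}, and the argument there is, in outline, the kind of reduction you describe --- \'etale/Galois descent identifying $L_{K(1)}K(k)$ with continuous homotopy fixed points, collapse of the descent spectral sequence, multiplicativity, and classical local Tate duality on associated graded pieces --- so your skeleton is the right one. But one step is wrong as written, and it is exactly the point the paper flags as needing care. In the descent filtration of $\pi_{0}\bigl(L_{K(1)}K(k)\smaL M_{\bZpi}\bigr)$ the group $H^{2}_{\et}(k;\bZpi(1))$ is the \emph{deepest} filtration piece, hence a subgroup, with $H^{0}_{\et}(k;\bZpi)$ as the quotient; there is no canonical surjection $\pi_{0}\bigl(L_{K(1)}K(k)\smaL M_{\bZpi}\bigr)\twoheadrightarrow H^{2}_{\et}(k;\bZpi(1))$, so ``reduction along the descent filtration followed by $\mathrm{inv}$'' does not define a functional and cannot serve as your identification of the map \eqref{eq:canmap}. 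The content of \cite[(1.2)]{BM-ktpd} is precisely the construction of a specific extension of the Hasse invariant from that subgroup to all of $\pi_{0}$, and your proof must engage with that construction at least to the extent of knowing its restriction to the subgroup.

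Fortunately, that restriction property is all your perfectness argument needs, so the fix is a rephrasing rather than a new idea: multiplicativity of the filtration gives $F^{s}\cdot F^{s'}\subseteq F^{s+s'}$ and $F^{3}\pi_{0}=0$, so pairing $F^{s}\pi_{m}$ against $F^{3-s}\pi_{-m}$ is zero; the induced pairing on associated graded lands in $F^{2}\pi_{0}\iso H^{2}_{\et}(k;\bZpi(1))$, where the trace \emph{is} $\mathrm{inv}$ by construction, and collapse at $E_{2}$ plus multiplicativity of the descent spectral sequence identify this graded pairing with the Tate pairing $H^{s}(k;\bZp(t))\otimes H^{2-s}(k;\bZpi(1-t))\to\bZpi$; perfectness and the finite filtrations (with the target filtered by annihilators) then give the $\pi_{*}$-isomorphism as you intend. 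Two smaller points: the inputs you invoke (rigidity giving $L_{K(1)}K(\bar k)\simeq KU\phat$, $\mathrm{cd}_{p}(k)=2$, Tate duality for $\bZp(t)$-coefficients) require $\mathrm{char}\,k\neq p$, whereas the statement allows equal characteristic $p$; in that case the descent picture degenerates, but $L_{K(1)}K(k)\simeq *$ (by Geisser--Levine the mod $p$ $K$-groups of such $k$ vanish above degree $1$, so $v_{1}$ acts trivially), so both sides vanish and the statement is vacuous --- say so explicitly. And your ``collapse'' step should cite that $H^{0}_{\et}(k;\bZp(t))=0$ for $t\neq0$ in the \emph{continuous} (inverse-limit) sense, which is true but is the reason column $0$ is only the unit copy of $\bZp$.
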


We have stated the theorem in a way that emphasizes the parallel with
the algebraic result.
The functor $I_{\bQ/\bZ}(-)$ denotes Brown-Comenetz duality, the spectral
analogue of Pontryagin duality; it can be constructed in terms of the
Brown-Comenetz dual of the sphere spectrum $I_{\bQ/\bZ}\bS$ as the derived
function spectrum $F(-,I_{\bQ/\bZ}\bS)$.  The functor $I_{\bZp}(-)$
denotes $\bZp$-Anderson duality; it can be constructed as 
\[
I_{\bZp}(-)=I_{\bQ/\bZ}(-\smaL M_{\bZpi})\iso
F(-,F(M_{\bZpi},I_{\bQ/\bZ}\bS))\iso F(-,I_{\bZp}\bS).
\]

The duality map in Theorem~\ref{thm:localduality} is constructed as
follows.  The $E_{\infty}$
multiplication on $K(k)$ induces a map
\[
L_{K(1)}K(k) \smaL L_{K(1)}K(k) \smaL M_{\bZpi}\to L_{K(1)}K(k)\smaL M_{\bZpi}
\]
and we have a canonical map 
\begin{equation}\label{eq:canmap}
L_{K(1)}K(k)\smaL M_{\bZpi}\to I_{\bQ/\bZ}\bS
\end{equation}
essentially induced by the Hasse invariant (see \cite[(1.2)]{BM-ktpd}
for details).  

In terms of Anderson duality, the weak equivalence 
\[
L_{K(1)}K(k)\overto{\simeq}I_{\bZp}(L_{K(1)}K(k))
\]
in Theorem~\ref{thm:localduality} is adjoint to the map
\[
L_{K(1)}K(k) \smaL L_{K(1)}K(k) \to I_{\bZp}\bS
\]
induced by the multiplication and the map
\begin{equation}\label{eq:v}
v_{k}\colon L_{K(1)}K(k)\to I_{\bZp}\bS.
\end{equation}
adjoint to~\eqref{eq:canmap}.

The local duality theorem relates to our work in this paper when we
consider the case $k=\bQp$.  In this case, Quillen's localization
theorem and Quillen's calculation of the $K$-theory of $\bF_{p}$
together say that the fiber of the map $K(\bZp)\phat\to K(\bQp)\phat$
is $H\bZp$ and it follows that the map $L_{K(1)}K(\bZp)\to
L_{K(1)}L(\bQp)$ is a weak equivalence.  We then have the following
corollary. 

\begin{cor}[Local duality for $\bZp$]\label{cor:localduality}
Let $v_{\bZp}\colon L_{K(1)}K(\bZp)
\to I_{\bZp}\bS$ be the composite of the map
$L_{K(1)}K(\bZp)\overto{\simeq}L_{K(1)}K(\bQp)$ and the map
$v_{\bQp}\colon L_{K(1)}K(\bQp)\to I_{\bZp}\bS$ of~\eqref{eq:v}.
The map $L_{K(1)}K(\bZp)\to I_{\bZp}(L_{K(1)}K(\bZp))$ adjoint to the
composite of multiplication and $v_{\bZp}$
\[
L_{K(1)}K(\bZp)\smaL L_{K(1)}K(\bZp)\to L_{K(1)}K(\bZp)
\overto{v_{\bZp}} I_{\bZp}\bS
\]
is a weak equivalence.
\end{cor}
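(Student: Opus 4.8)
The plan is to deduce Corollary~\ref{cor:localduality} from Theorem~\ref{thm:localduality} in the case $k=\bQp$, by transporting the duality equivalence across the weak equivalence $L_{K(1)}K(\bZp)\overto{\simeq}L_{K(1)}K(\bQp)$. Write $\phi$ for this equivalence; it exists by the discussion preceding the corollary, since Quillen's localization theorem and the computation of $K(\bF_{p})$ identify the fiber of $K(\bZp)\phat\to K(\bQp)\phat$ with $H\bZp$, and $L_{K(1)}H\bZp\simeq *$. Recall also that $v_{\bZp}$ is \emph{defined} to be $v_{\bQp}\circ\phi$.

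First I would record that $\phi$ is multiplicative in the stable category: the ring homomorphism $\bZp\to\bQp$ induces a map of $E_{\infty}$ ring spectra $K(\bZp)\to K(\bQp)$, so after $K(1)$-localization we get $\phi\circ\mu_{\bZp}\simeq\mu_{\bQp}\circ(\phi\smaL\phi)$, where $\mu_{\bZp}$ and $\mu_{\bQp}$ denote the two multiplications. Composing with $v_{\bQp}$ and using $v_{\bZp}=v_{\bQp}\circ\phi$ then shows that the pairing
\[
v_{\bZp}\circ\mu_{\bZp}\colon L_{K(1)}K(\bZp)\smaL L_{K(1)}K(\bZp)\to I_{\bZp}\bS
\]
is the precomposition with $\phi\smaL\phi$ of the corresponding pairing $v_{\bQp}\circ\mu_{\bQp}$ for $\bQp$.

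Then I would pass to adjoints. Since $I_{\bZp}(-)=F(-,I_{\bZp}\bS)$ carries smash products to function spectra, this identity of pairings becomes the assertion that the square
\[
\xymatrix{
L_{K(1)}K(\bZp)\ar[r]^{D_{\bZp}}\ar[d]_{\phi} & I_{\bZp}(L_{K(1)}K(\bZp))\\
L_{K(1)}K(\bQp)\ar[r]^{D_{\bQp}} & I_{\bZp}(L_{K(1)}K(\bQp))\ar[u]_{I_{\bZp}(\phi)}
}
\]
commutes, i.e.\ $D_{\bZp}=I_{\bZp}(\phi)\circ D_{\bQp}\circ\phi$, where $D_{\bZp}$ and $D_{\bQp}$ are the maps adjoint to $v_{\bZp}\circ\mu_{\bZp}$ and $v_{\bQp}\circ\mu_{\bQp}$. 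Now $\phi$ is a weak equivalence, hence so is $I_{\bZp}(\phi)=F(\phi,I_{\bZp}\bS)$; and $D_{\bQp}$ is a weak equivalence by Theorem~\ref{thm:localduality} for $k=\bQp$, using the reformulation of that theorem in terms of $v_{\bQp}$ recalled immediately after its statement. Therefore $D_{\bZp}$ is a weak equivalence, which is exactly the claim.

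The only delicate points are bookkeeping: that the Hasse-invariant map~\eqref{eq:canmap} and the map $v$ of~\eqref{eq:v} are used exactly as in~\cite{BM-ktpd}, so that the $\bQp$ instance of Theorem~\ref{thm:localduality} is literally ``$D_{\bQp}$ is a weak equivalence,'' and that forming adjoints over the second smash factor is natural enough to yield the displayed square. Both are formal, so I do not expect a genuine obstacle; all of the real content lies in Theorem~\ref{thm:localduality} and in the vanishing $L_{K(1)}H\bZp\simeq *$.
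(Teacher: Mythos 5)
Your proposal is correct and matches the paper's intent: the paper deduces the corollary exactly by noting that Quillen's localization theorem makes $L_{K(1)}K(\bZp)\to L_{K(1)}K(\bQp)$ a weak equivalence and then transporting the $k=\bQp$ case of Theorem~\ref{thm:localduality} across this multiplicative equivalence, which is precisely your argument. The paper leaves this formal bookkeeping implicit, while you spell out the commuting square of adjoints; there is no substantive difference.
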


The $K$-theoretic analogue of global duality identifies $\HFC$
in~\eqref{eq:fibloc} in terms of the $\bZp$-Anderson dual of
$L_{K(1)}K(\bZ)$.  Rather than stating it in the full generality
proved in~\cite{BM-ktpd}, we state it just in this case.
In~\cite[(1.7)]{BM-ktpd}, we construct a map 
\begin{equation}\label{eq:u}
u_{\bQ}\colon \HFC\to \Sigma^{-1} I_{\bZp}\bS.
\end{equation}
from the Albert-Brauer-Hasse-Noether sequence for $\bQ$. It is
compatible with the map $v_{\bQp}$ above in the
sense that $v_{\bQp}$ is the composite  
\[
L_{K(1)}K(\bQp)\simeq L_{K(1)}K(\bZp)\to \Sigma \HFC\overto{\Sigma u_{\bQ}}
\Sigma \Sigma^{-1}I_{\bZp}\bS\iso I_{\bZp}\bS
\]
where $L_{K(1)}K(\bZp)\to \Sigma \HFC$ is the connecting map in the
fiber sequence~\eqref{eq:fibloc}.  The map~\eqref{eq:u} induces the
following $K$-theoretic global duality theorem.  

\begin{thm}[$K$-Theoretic Tate-Poitou Duality for $\bZ$]
\label{thm:globalduality}
The map 
\[
\HFC\to \Sigma^{-1}I_{\bZp} L_{K(1)}K(\bZ)
\]
adjoint to the map 
\[
L_{K(1)}K(\bZ)\smaL \HFC\to \Sigma^{-1} I_{\bZp}\bS
\]
induced by the $L_{K(1)}K(\bZ)$-module structure map $L_{K(1)}K(\bZ)\smaL \HFC\to \HFC$
and the map $u_{\bQ}\colon \HFC\to \Sigma^{-1}I_{\bZp}\bS$ is a weak equivalence.
\end{thm}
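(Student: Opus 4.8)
The plan is to deduce this global statement from the two ingredients already in play: the $K$-theoretic \emph{local} duality of Corollary~\ref{cor:localduality} and the Albert-Brauer-Hasse-Noether sequence for $\bQ$ that produced the map $u_{\bQ}$ in~\eqref{eq:u}. First I would set up the adjoint of the pairing explicitly, so the claimed equivalence becomes a concrete map $\HFC \to \Sigma^{-1} I_{\bZp} L_{K(1)}K(\bZ) = F(L_{K(1)}K(\bZ), \Sigma^{-1}I_{\bZp}\bS)$, and observe that since everything is $K(1)$-local it suffices to check it is an equivalence after applying $I_{\bZp}$ again (or, equivalently, to check the pairing $L_{K(1)}K(\bZ)\smaL \HFC\to \Sigma^{-1}I_{\bZp}\bS$ is a perfect duality pairing of $K(1)$-local spectra). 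The natural framework is to realize~\eqref{eq:fibloc} as (the $K(1)$-localization of) the fiber sequence computing a ``compactly supported'' or global-to-local $K$-theory spectrum, so that $\HFC$ sits in a long exact sequence with $L_{K(1)}K(\bZ)$ and $L_{K(1)}K(\bZp)\simeq L_{K(1)}K(\bQp)$, mirroring the nine-term Tate--Poitou sequence.

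The key steps, in order: (1) Produce a diagram of fiber sequences in which the top row is~\eqref{eq:fibloc} and the bottom row is the $I_{\bZp}$-dual of~\eqref{eq:fibloc} suitably desuspended and rotated; the vertical maps are the global duality map on the $\HFC$ term, the local duality equivalence of Corollary~\ref{cor:localduality} on the $L_{K(1)}K(\bZp)$ term (appropriately reindexed using the compatibility of $v_{\bQp}$ with $u_{\bQ}$ recorded just above Theorem~\ref{thm:globalduality}), and the candidate equivalence $L_{K(1)}K(\bZ)\to \Sigma I_{\bZp}\HFC$ (dual to the global map) on the $L_{K(1)}K(\bZ)$ term. (2) Check this diagram commutes up to the stated signs — this is where the careful cofiber/fiber sign conventions in the Conventions section get used, and it is essentially a formal manipulation of the module-structure maps and the definition of $u_{\bQ}$ from the ABHN sequence. (3) Invoke Corollary~\ref{cor:localduality} to conclude the middle vertical map is an equivalence, and then the five-lemma (in the triangulated category) forces the outer two to be equivalences simultaneously — i.e. the global duality map on $\HFC$ is an equivalence if and only if its ``Pontryagin-dual partner'' on $L_{K(1)}K(\bZ)$ is, and running the argument in both directions pins both down.

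The main obstacle is step (1)–(2): exhibiting the comparison of~\eqref{eq:fibloc} with the $I_{\bZp}$-dual of itself as a genuine map of fiber sequences, with the correct identification of the connecting maps. The underlying arithmetic input — that the ABHN (Tate--Poitou) sequence for $\bQ$ is self-dual with the local terms paired by local duality — is classical, but transporting it to the spectral level requires knowing that the map~\eqref{eq:u} is compatible not just with $v_{\bQp}$ (which is asserted) but with the full module structure, and that no $K(1)$-local obstruction classes intervene. Concretely, one must verify that the square relating the $L_{K(1)}K(\bZ)$-module pairing on $\HFC$ to the $L_{K(1)}K(\bQp)$-module pairing under the localization map $L_{K(1)}K(\bZ)\to L_{K(1)}K(\bQp)$ commutes; this should follow from naturality of~\eqref{eq:canmap} and the construction of $u_{\bQ}$ in~\cite{BM-ktpd}, but it is the part that genuinely uses the geometry of $\Spec\bZ$ rather than pure formalism. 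Once that square is in hand, the rest is bookkeeping with the triangulated five-lemma and the already-established local duality.
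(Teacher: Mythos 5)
There is a genuine gap, and it is in the step you yourself flag as the conclusion. Your plan is to compare the cofiber sequence~\eqref{eq:fibloc} with its $\bZp$-Anderson dual and then run the triangulated five lemma using Corollary~\ref{cor:localduality} for the middle term. But in that comparison only \emph{one} of the three vertical maps is known to be an equivalence (local duality for $L_{K(1)}K(\bZp)$); the other two are the global duality map $\HFC\to\Sigma^{-1}I_{\bZp}L_{K(1)}K(\bZ)$ and its partner $L_{K(1)}K(\bZ)\to\Sigma^{-1}I_{\bZp}\HFC$, which are exactly what you are trying to prove are equivalences. The five lemma then only yields the conditional statement ``one outer map is an equivalence if and only if the other is''; ``running the argument in both directions'' does not break this circularity, and no purely formal manipulation of the module pairings, the compatibility of $u_{\bQ}$ with $v_{\bQp}$, or the sign conventions can. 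This mirrors the arithmetic situation: local Tate duality at all places together with the Albert--Brauer--Hasse--Noether sequence does not formally imply Poitou--Tate duality, which is a separate global theorem whose proof uses class field theory globally. Note also that the commuting ladder you propose to build in steps (1)--(2) is essentially Theorem~\ref{thm:selfdual}, which in this paper is proved \emph{using} Theorem~\ref{thm:globalduality}; your argument inverts that logical order.

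In fact the paper does not prove Theorem~\ref{thm:globalduality} here at all: Section~\ref{sec:local-review} is an expository review, and the theorem is quoted from~\cite{BM-ktpd}, where the proof constructs a spectral lift of classical Tate--Poitou (Artin--Verdier) duality for $\Spec\bZ[\tfrac1p]$ and verifies that the map induced by $u_{\bQ}$ and the module structure realizes the classical perfect pairing on the associated graded of the $K(1)$-local descent spectral sequence, so that the classical duality theorem forces an isomorphism on homotopy groups. That genuinely global arithmetic input is the missing ingredient in your proposal; a correct argument along your lines would have to import Poitou--Tate duality itself (for example through Thomason's descent spectral sequence identifying $\pi_{*}L_{K(1)}K(\bZ[\tfrac1p])$ with continuous \'etale cohomology), not merely the local duality of Corollary~\ref{cor:localduality} and the existence of $u_{\bQ}$.
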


\section{The eigensplitting of the fiber of the cyclotomic trace}

Combining Theorem~\ref{thm:globalduality} with the canonical wedge
decomposition of $K(\bZ)$ described above in
equation~\eqref{eq:DM-split-KZ}, we obtain the following decomposition 
of $\HFC$.
\begin{align*}
\HFC \simeq \Sigma^{-1}I_{\bZp}(L_{K(1)}K(\bZ)) 
&\simeq
\Sigma^{-1}I_{\bZp}(J\vee Y_{0}\vee \cdots \vee Y_{p-2})\\
&\simeq
\Sigma^{-1}I_{\bZp}J \vee \Sigma^{-1}I_{\bZp}Y_{0} \vee \cdots  \vee \Sigma^{-1}I_{\bZp}Y_{p-2}.
\end{align*}
Our goal in this section is to identify this wedge decomposition with
(a permutation of)
the wedge decomposition 
\[
\HFC\simeq J'\vee X_{0}\vee \cdots \vee X_{p-2}
\]
constructed above in Corollary~\ref{cor:split}. This is accomplished
in the following theorem together with an observation on the $J'$
summand stated in Theorem~\ref{thm:stdmap} below. 

\begin{thm}
The canonical isomorphism in the stable category
\[
\HFC\simeq \Sigma^{-1}I_{\bZp}J \vee \Sigma^{-1}I_{\bZp}Y_{0} \vee \cdots  \vee \Sigma^{-1}I_{\bZp}Y_{p-2}.
\]
identifies
$J'$ as $\Sigma^{-1}I_{\bZp}J$ and 
$X_{i}$ as $\Sigma^{-1}I_{\bZp}Y_{p-i}$ (for $i\neq 0,1$) or
$\Sigma^{-1}I_{\bZp}Y_{1-i}$ (for $i=0,1$).  Moreover:
\begin{enumerate}
\item $[X_{i},\Sigma^{-1}I_{\bZp}Y_{j}]=0$ unless $i+j\equiv 1$ mod
$(p-1)$. 
\item $[X_{i},\Sigma^{-1}I_{\bZp}J]=0$ for all $i$.
\item $[J',\Sigma^{-1}I_{\bZp}Y_{j}]=0$ for all $j$.
\end{enumerate}
\end{thm}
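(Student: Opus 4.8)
The plan is to compute the relevant groups of maps by using $L$-cohomology as the organizing invariant, and then to identify the summands by matching connectivity and $L$-cohomology data on both sides.

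\medskip

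First I would establish the vanishing statements (i)--(iii), since these are what make the wedge decomposition ``diagonal'' (up to the permutation $i\mapsto 1-i$) and hence force the identification. For (i), the key input is Proposition~\ref{prop:mapsfromX}: $[X_{i},\Sigma^{j}L]=0$ unless $j\equiv 2i-1$ or $2i-2$ mod $2(p-1)$. Dually, since $\Sigma^{-1}I_{\bZp}Y_{j}=\Sigma^{-1}F(Y_{j},I_{\bZp}\bS)$ and $L^{*}(Y_{j})$ is concentrated in degrees $\equiv 2j-1$ mod $2(p-1)$ with $Y_{j}$ built as a two-term complex of wedges of $\Sigma^{2j-1}L$, the spectrum $\Sigma^{-1}I_{\bZp}Y_{j}$ is built from $\Sigma^{-2j}I_{\bZp}L$; using that $I_{\bZp}L$ is (non-canonically) a shift of $L$ by an odd amount — concretely $I_{\bZp}KU\phat\simeq \Sigma^{-1}KU\phat$, so $I_{\bZp}L\simeq \Sigma^{?}L$ lands in $L$-homology concentrated in a single residue class mod $2(p-1)$ — one reads off that $[X_{i},\Sigma^{-1}I_{\bZp}Y_{j}]$ can be nonzero only when the residue classes $\{2i-1,2i-2\}$ meet the class forced by $\Sigma^{-1}I_{\bZp}Y_{j}$, which happens exactly when $i+j\equiv 1$ mod $(p-1)$. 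The same bookkeeping gives (ii), because $\pi_{*}J$ (hence $L_{*}J$, hence $L_{*}\Sigma^{-1}I_{\bZp}J$) is concentrated in the residue class $2p-3\equiv -1$ together with the ``$j$-part'' in degree $0$; one checks $X_{i}$ has no maps to the relevant class, handling the $i=0,1$ cases separately via Propositions~\ref{prop:X0} and~\ref{prop:X1}. Statement (iii) follows symmetrically: $J'\simeq L_{K(1)}M_{\bZpt}$ is a shift-free wedge summand concentrated (in $L$-homology) in the $0$-residue class, while $\Sigma^{-1}I_{\bZp}Y_{j}$ sits in the class $1-2j$ mod $2(p-1)$, which is $0$ only when $j\equiv \tfrac12$ — impossible mod $(p-1)$ since $p-1$ is even and $2j-1$ is odd — so the Hom group vanishes for every $j$.

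\medskip

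With the vanishing in hand, I would deduce the identification formally. Under the canonical splitting $\HFC\simeq J'\vee X_{0}\vee\cdots\vee X_{p-2}$ of Corollary~\ref{cor:split} and the decomposition $\HFC\simeq \Sigma^{-1}I_{\bZp}J\vee \Sigma^{-1}I_{\bZp}Y_{0}\vee\cdots\vee\Sigma^{-1}I_{\bZp}Y_{p-2}$ coming from Theorem~\ref{thm:globalduality} and~\eqref{eq:DM-split-KZ}, the identity self-map of $\HFC$ is, by (i)--(iii), block-diagonal: it carries $\Sigma^{-1}I_{\bZp}J$ isomorphically onto $J'$, and carries $\Sigma^{-1}I_{\bZp}Y_{j}$ isomorphically onto the unique $X_{i}$ with $i+j\equiv 1$ mod $(p-1)$ — that is, $X_{i}\simeq\Sigma^{-1}I_{\bZp}Y_{1-i}$ with indices mod $(p-1)$, which unwinds to $\Sigma^{-1}I_{\bZp}Y_{p-i}$ for $i\neq 0,1$ and $\Sigma^{-1}I_{\bZp}Y_{1-i}$ for $i=0,1$, exactly as stated. (One must check the off-diagonal blocks vanish in \emph{both} directions, i.e.\ also $[\Sigma^{-1}I_{\bZp}Y_{j},X_{i}]=0$ for $i+j\not\equiv 1$ and the analogues for $J'$; these follow from the same $L$-cohomology residue-class argument applied with the roles reversed, using that $I_{\bZp}$ is an equivalence of the $K(1)$-local category with its opposite so that Anderson-dualizing Proposition~\ref{prop:mapsfromX} controls maps \emph{into} $I_{\bZp}Y_{j}$ as well.)

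\medskip

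The main obstacle I expect is pinning down the precise suspension in $I_{\bZp}L\simeq \Sigma^{?}L$ and, relatedly, making sure the residue-class arithmetic comes out to $i+j\equiv 1$ rather than some other affine relation. Everything hinges on the single clean fact that $\bZp$-Anderson duality shifts $p$-complete $K$-theory by an odd degree (so $L_{*}I_{\bZp}L$ is concentrated one class away from where $L_{*}L$ sits), combined with $Y_{j}$ contributing the class $2j-1$; a sign or parity slip there would flip the permutation. I would double-check this against the known low-index cases: for $i=0$, Proposition~\ref{prop:X0} gives $X_{0}\simeq\Sigma^{-2}L$ and the claim predicts $X_{0}\simeq\Sigma^{-1}I_{\bZp}Y_{1}$, which via Proposition~\ref{prop:yz} ($Y_{1}\simeq\Sigma L$) becomes $\Sigma^{-1}I_{\bZp}\Sigma L=\Sigma^{-2}I_{\bZp}L$, consistent precisely when $I_{\bZp}L\simeq L$ up to a shift by a multiple of $2(p-1)$ — in fact the identification should be taken $K(1)$-locally where $I_{\bZp}L\simeq L$ on the nose after the appropriate normalization, and for $i=1$, Proposition~\ref{prop:X1} gives $X_{1}\simeq *$ matching $\Sigma^{-1}I_{\bZp}Y_{0}\simeq *$ by Proposition~\ref{prop:ymo}. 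These consistency checks both confirm the normalization and the relation $i+j\equiv 1$, so once the shift in $I_{\bZp}L$ is nailed down the rest is the routine residue-class bookkeeping sketched above.
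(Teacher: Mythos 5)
Your overall strategy coincides with the paper's: use Proposition~\ref{prop:mapsfromX} together with the fact that $\Sigma^{-1}I_{\bZp}Y_{j}$ is built from shifted copies of $L$ to prove the vanishing statements (i)--(iii), and then deduce the identification formally from the two splittings of $\HFC$. But as written there are genuine gaps. First, the linchpin duality fact is misstated: you claim $I_{\bZp}L$ is an odd shift of $L$, ``concretely $I_{\bZp}KU\phat\simeq\Sigma^{-1}KU\phat$,'' which is false --- Anderson self-duality gives $I_{\bZp}KU\phat\simeq KU\phat$ and $I_{\bZp}L\simeq L$ with no shift (the paper produces this canonically from the multiplication $L\smaL L\to L$ and $\pi_{0}L\iso\bZp$). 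With your stated normalization the residue bookkeeping does not give $i+j\equiv 1$: the $\Sigma^{2j-1}L$-cells of $Y_{j}$ would contribute cells of $\Sigma^{-1}I_{\bZp}Y_{j}$ in class $-2j-1$, and Proposition~\ref{prop:mapsfromX} would then force $i+j\equiv 0$, flipping the permutation. You notice this tension yourself, but your ``fix'' --- calibrating the shift by consistency with Propositions~\ref{prop:X0} and~\ref{prop:yz} --- is not a proof of the duality normalization, and you explicitly leave it ``to be nailed down.'' Second, your argument for (iii) is incorrect as stated: $\Sigma^{-1}I_{\bZp}Y_{j}$ has cells in \emph{two} residue classes, $-2j$ and $1-2j$ (the $Y_{j}$ are two-stage complexes with $\Sigma^{2j-1}$ and $\Sigma^{2j-2}$ cells), so the class-$0$ coincidence is not ``impossible for every $j$'': it occurs at $j\equiv 0$, and since $[J',W]\iso\Hom((\bZpt)\phat,\pi_{0}W)$ for $K(1)$-local $W$, the vanishing $[J',\Sigma^{-1}I_{\bZp}Y_{0}]=0$ genuinely requires $Y_{0}\simeq *$ (Proposition~\ref{prop:ymo}), which you never invoke. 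The same two-class bookkeeping should appear in (i), though there parity happens to save the conclusion.

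A smaller point: your parenthetical worry that the off-diagonal blocks must vanish ``in both directions'' is unnecessary. Since (i)--(iii) force the matrix of the identity map of $\HFC$ (from the splitting of Corollary~\ref{cor:split} to the Anderson-dual splitting) to be supported on a permutation pattern, composing with the components of its inverse gives each surviving block a left and a right inverse, hence it is an equivalence; this is exactly how the paper deduces the identification from (i)--(iii) alone, and it spares you the hand-waved dualization of Proposition~\ref{prop:mapsfromX} used to control maps into $I_{\bZp}Y_{j}$.
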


In later formulas we will simply write $X_{i}\simeq
\Sigma^{-1}I_{\bZp}Y_{p-i}$ and understand the indexing to be mod
$(p-1)$. 

\begin{proof}
To simplify notation, we write $D$ for $\Sigma^{-1}I_{\bZp}$ inside
this proof.  The multiplication $L\smaL L\to L$ together with the
canonical identification of $\pi_{0}L$ as $\bZp$ induces a map $L\to
I_{\bZp}L$ that is easily seen to be an isomorphism in the stable
category, q.v.~\cite[2.6]{Knapp-Anderson} (this is essentially due to
Anderson~\cite{Anderson-UCT}). This gives us a canonical
identification of $DL$ as $\Sigma^{-1}L$, which is the main tool we
use. 

All statements follow from verification of~(i), (ii), and (iii).  
As discussed above, $Y_{j}$ fits in a cofiber sequence of the form
\[
\bigvee \Sigma^{2j-2}L\to Y_{j}\to \bigvee \Sigma^{2j-1}L\to \bigvee \Sigma^{2j-1}L
\]
(for some finite wedges of copies of $\Sigma^{n}L$); it follows that
$DY_{j}$ fits into a cofiber sequence of the form
\[
\bigvee \Sigma^{-2j}L\to \bigvee \Sigma^{-2j}L\to DY_{j}\to
\bigvee \Sigma^{-2j+1}L.
\]
Applying Proposition~\ref{prop:mapsfromX}, we see that
$[X_{i},DY_{j}]=0$ unless $-2j\equiv 2i-2$ mod $2(p-1)$, or
equivalently $i+j\equiv 1$ mod $(p-1)$. This proves~(i).  

Writing $J$ as the fiber of a self-map of $L$,  $DJ$ fits then into a
cofiber sequence of the form 
\[
\Sigma^{-1}L\to DJ\to L\to L
\]
and again applying Proposition~\ref{prop:mapsfromX}, it follows that
$[X_{i},DJ]=0$ unless $2i-2\equiv 0$ or $2i-1\equiv -1$ mod $2(p-1)$.
In the first case, $X_{1}\simeq *$ by Proposition~\ref{prop:X1}.
In the second case $i=0$; by Proposition~\ref{prop:X0}, $X_{0}$ is
non-canonically weakly equivalent to $\Sigma^{-2}L$, and
$[\Sigma^{-2}L,DJ]=0$.  This proves~(ii). 

Finally, to prove~(iii), we note that $DY_{j}$ is $K(1)$-local.  Since
$J'$ is non-canonically weakly equivalent to $J\simeq L_{K(1)}\bS$, to
see that $[J',DY_{j}]=0$ it suffices to see that $\pi_{0}DY_{j}=0$.
Since $\pi_{*}Y_{j}$ is concentrated in degrees congruent to $2j-1$ and
$2j-2$ mod $2(p-1)$, we have that $\pi_{0}DY_{j}$ can only possibly be
non-zero for $j=0$ but by Proposition~\ref{prop:ymo}, $Y_{0}\simeq *$.
\end{proof}

While we have defined the $X_{i}$ solely in terms of the fiber
sequence, we have defined $J'$ intrinsically, and so the equivalence
of $J'$ with $\Sigma^{-1}I_{\bZp} J$ under the isomorphism in the
stable category $\HFC\simeq \Sigma^{-1}I_{\bZp}K(\bZ)$ constitutes
additional information.  In fact, we have a canonical weak equivalence 
\[
J'\to \Sigma^{-1}I_{\bZp}J
\]
that we call the \term{the standard weak equivalence}, constructed as
follows.  Since $J'$ is defined as the
$K(1)$-localization of the Moore spectrum $M_{\bZpt}$, and
$(\bZpt)\phat$ is a projective $\bZp$-module, maps in the
stable category from $J'$ into $K(1)$-local spectra are in canonical
one-to-one correspondence with homomorphisms from $(\bZpt)\phat$ into
$\pi_{0}$.  We note that $\Sigma^{-1}I_{\bZp}J$ is $K(1)$-local,
and to calculate $\pi_{0}\Sigma^{-1}I_{\bZp}J$, we use the fundamental
short exact sequence for the $\bZp$-Anderson dual:
For any spectrum $X$, there is a canonical natural short exact
sequence 
\begin{equation}\label{eq:fses}
0\to \Ext(\pi_{-n-1}X,\bZp)\to \pi_{n}I_{\bZp} X\to
\Hom(\pi_{-n}X,\bZp)\to 0.
\end{equation}
For finitely generated $\bZp$-modules, $\Hom(-,\bZp)$ and
$\Ext(-;\bZp)$ coincide with\break $\Hom_{\bZp}(-,\bZp)$ and
$\Ext_{\bZp}(-;\bZp)$.  In the case of $X=J$, since $\pi_{-2}J=0$, we
then have a canonical identification of $\pi_{0}\Sigma^{-1}I_{\bZp}J$
as $\Hom(\pi_{-1}J,\bZp)$.  The Morava Change of Rings Theorem
identifies $\pi_{-1}J$ canonically in terms of continuous group
cohomology:
\[
\pi_{-1}J\iso H^{1}_{c}(\bZpt;\bZp)\iso \Hom(\bZpt,\bZp) \iso \Hom((\bZpt)\phat,\bZp),
\]
q.v.~\cite[(1.1)]{DevinatzHopkins-Homotopy}, for the continuous action
of $\bZpt$ on $\pi_{*}KU\phat$ arising from the $p$-adic interpolation
of the Adams operations.  This then gives a
canonical isomorphism
\[
\pi_{0}\Sigma^{-1}I_{\bZp}J\iso \Hom(\Hom((\bZpt)\phat,\bZp),\bZp).
\]
Since $(\bZpt)\phat$ is
projective of rank 1, the double dual map
is an isomorphism, giving us a canonical isomorphism 
\[
(\bZpt)\phat\to \pi_{0}\Sigma^{-1}I_{\bZp}J
\]
specifying the standard weak equivalence.

On the other hand, we have a canonical map $J'\to \HFC$ arising from
the fiber sequence~\eqref{eq:fibloc} and the $\Sigma J'$ summand of
$L_{K(1)}K(\bZp)\simeq L_{K(1)}K(\bQp)$.  In terms of maps from
$(\bZpt)\phat$ into $\pi_{0}\HFC$, we can therefore identify
this map $J'\to \HFC$ as coming from the canonical identification of
the cokernel of
\[
\pi_{1}L_{K(1)}K(\bZ)\to \pi_{1}L_{K(1)}K(\bZp)
\]
as $(\bZpt)\phat$ (the $p$-completion of the cokernel of the map
$(\bZ[1/p])^{\times}\to \bQpt$).  The following theorem compares the
two maps.

\begin{thm}\label{thm:stdmap}
The composite map $J'\to \HFC\simeq \Sigma^{-1}I_{\bZp}K(\bZ)\to
\Sigma^{-1}I_{\bZp}J$ is the standard weak equivalence.
\end{thm}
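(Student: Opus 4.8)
The plan is to reduce the statement to a computation on $\pi_{0}$ and then recognize the relevant part of $K$-theoretic local duality as the evaluation pairing between the units of $\bZp$ and their $\bZp$-linear dual. Since $J'=L_{K(1)}M_{\bZpt}$ and $(\bZpt)\phat$ is free of rank one over $\bZp$, a map in the stable category from $J'$ into any $K(1)$-local spectrum is determined by the induced homomorphism out of $\pi_{0}J'=(\bZpt)\phat$. The target $\Sigma^{-1}I_{\bZp}J$ is $K(1)$-local, so it suffices to show that the composite $J'\to\HFC\simeq\Sigma^{-1}I_{\bZp}K(\bZ)\to\Sigma^{-1}I_{\bZp}J$ induces on $\pi_{0}$ the canonical isomorphism defining the standard weak equivalence, namely the composite of the identification $\pi_{0}\Sigma^{-1}I_{\bZp}J\iso\Hom(\pi_{-1}J,\bZp)$ from \eqref{eq:fses} (an isomorphism because $\pi_{-2}J=0$), the Morava change-of-rings isomorphism $\pi_{-1}J\iso\Hom(\bZpt,\bZp)$, and the inverse of the double-dual map $(\bZpt)\phat\to\Hom(\Hom(\bZpt,\bZp),\bZp)$. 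Writing $\iota\colon J'\to\HFC$ for the canonical map, we already know (from the discussion preceding the statement) that $\pi_{0}\iota$ is the canonical identification of $(\bZpt)\phat$ with $\coker(\pi_{1}\kappa)\hookrightarrow\pi_{0}\HFC$.

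Second, I would unwind the global duality equivalence $\HFC\simeq\Sigma^{-1}I_{\bZp}K(\bZ)$ of Theorem~\ref{thm:globalduality}: it is adjoint to the pairing built from the $L_{K(1)}K(\bZ)$-module structure on $\HFC$ and $u_{\bQ}$. Composing with the projection $\Sigma^{-1}I_{\bZp}K(\bZ)\to\Sigma^{-1}I_{\bZp}J=F(J,\Sigma^{-1}I_{\bZp}\bS)$ dual to the summand inclusion $J\to L_{K(1)}K(\bZ)$ — which is the $K(1)$-localized unit map, so that the resulting $J=L_{K(1)}\bS$-action on the $K(1)$-local spectrum $\HFC$ is the canonical one — and then with $\iota$, one obtains a map $J'\to F(J,\Sigma^{-1}I_{\bZp}\bS)$ corresponding under adjunction to the composite $J\smaL J'\to J'\overto{u_{\bQ}\circ\iota}\Sigma^{-1}I_{\bZp}\bS$ of the $J$-action with $u_{\bQ}\circ\iota$. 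Tracing this through $\pi_{0}$ and through the surjection of \eqref{eq:fses}, one finds that the composite $\pi_{0}$-map sends $\beta\in(\bZpt)\phat$ to the homomorphism $\pi_{-1}J\to\bZp$, $x\mapsto(u_{\bQ}\circ\iota)_{*}(x\cdot\beta)$, where $x\cdot\beta\in\pi_{-1}J'$ is the $\pi_{*}J$-action (so that, under $\pi_{-1}J'\iso\pi_{-1}J\otimes_{\bZp}(\bZpt)\phat$, one has $x\cdot\beta=x\otimes\beta$) and $(u_{\bQ}\circ\iota)_{*}$ lands in $\pi_{0}I_{\bZp}\bS\iso\bZp$. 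So the problem reduces to showing that, under the Morava identification $\pi_{-1}J\iso\Hom(\bZpt,\bZp)$, the map $(u_{\bQ}\circ\iota)_{*}\colon\pi_{-1}J'\iso\Hom(\bZpt,\bZp)\otimes_{\bZp}(\bZpt)\phat\to\bZp$ is the contraction $\phi\otimes\beta\mapsto\phi(\beta)$.

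Third, I would replace $u_{\bQ}\circ\iota$ by the local duality map. The compatibility recalled before Theorem~\ref{thm:globalduality} gives $v_{\bZp}=\Sigma u_{\bQ}\circ\partial$ for the connecting map $\partial\colon L_{K(1)}K(\bZp)\to\Sigma\HFC$ of \eqref{eq:fibloc}, and Corollary~\ref{cor:split} identifies the restriction of $\partial$ to the $\Sigma J'$ summand with the summand inclusion $\Sigma\iota\colon\Sigma J'\to\Sigma\HFC$; hence $u_{\bQ}\circ\iota=\Sigma^{-1}(v_{\bZp}|_{\Sigma J'})$. Now by Corollary~\ref{cor:localduality}, $v_{\bZp}=I_{\bZp}(\text{unit})\circ D$ for the self-duality equivalence $D\colon L_{K(1)}K(\bZp)\overto{\simeq}I_{\bZp}L_{K(1)}K(\bZp)$; since the unit of $L_{K(1)}K(\bZp)$ factors through the $J$-summand as the $K(1)$-localization $\bS\to J$, the map $I_{\bZp}(\text{unit})$ is trivial on every $I_{\bZp}$-summand except $I_{\bZp}J$, on which it is $I_{\bZp}$ of $\bS\to J$, and this induces on $\pi_{0}$ (via the $\Hom$-part of \eqref{eq:fses}) precomposition with $\bZ\to\bZp=\pi_{0}J$, i.e.\ the identity of $\bZp$. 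Therefore $\pi_{0}(v_{\bZp}|_{\Sigma J'})$ is $\pi_{0}$ of the $I_{\bZp}J$-component of $D|_{\Sigma J'}$, which is governed by the local duality pairing $\langle-,-\rangle$ on $\pi_{0}$ of the $\Sigma J'$ and $J$ summands. The remaining point is that, under the identification of the $\Sigma J'$ summand in terms of the units of $\bZp$ (from~\cite{BM-KSpi}) and of $\pi_{-1}J$ via Morava change of rings, this pairing is the tautological one on $\Hom(\bZpt,\bZp)\otimes_{\bZp}(\bZpt)\phat$ — essentially local Tate duality for $\bZp(1)$, which underlies the construction of $v_{\bZp}$ from the Hasse invariant in~\cite{BM-ktpd}. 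Combined with the action formula $x\cdot\beta=x\otimes\beta$ from the second step, this yields precisely the double-dual map $\phi\mapsto\phi(\beta)$.

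I expect the last step to be the main obstacle: showing that the spectral local duality map $v_{\bZp}$, restricted to the units-summand $\Sigma J'$ and paired against the $J$-summand through the Anderson dual, recovers the tautological evaluation pairing $(\bZpt)\phat\times\Hom(\bZpt,\bZp)\to\bZp$. This pins down the last remaining $\bZp^{\times}$-scalar and requires carrying the normalization of the Hasse invariant from~\cite{BM-ktpd} through the Devinatz--Hopkins/Morava description of $\pi_{-1}J$; it is in essence classical local duality. A secondary difficulty is bookkeeping for the signs fixed in the Conventions section — in the connecting maps, in the $F(-,X)$-type fiber and cofiber sequences, and in the long exact sequences of homotopy groups — so as to confirm the answer is $+\phi(\beta)$ and not $-\phi(\beta)$.
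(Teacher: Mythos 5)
Your reductions in the first two steps are sound and parallel the paper's opening move: a map out of $J'$ into a $K(1)$-local target is determined by $\pi_{0}$, and after dualizing one must compare two isomorphisms $\pi_{-1}L_{K(1)}K(\bZ)\to\Hom(\bZpt,\bZp)$ --- one coming from the Morava change-of-rings description of $\pi_{-1}J$ together with the splitting $J\to L_{K(1)}K(\bZ)$ (the paper's $\alpha$), the other from the identification of $\pi_{0}\HFC$ with the cokernel of $\pi_{1}\kappa$ together with the Tate--Poitou equivalence (the paper's $\eta$). Your third step, using $v_{\bQp}=\Sigma u_{\bQ}\circ\partial$ and the splitting of the connecting map to reduce to a statement about the local duality pairing between the $\Sigma J'$ and $J$ summands of $L_{K(1)}K(\bQp)$, is likewise consistent with how the paper passes to the local comparison. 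But at that point you declare the remaining identification --- that this pairing is the tautological evaluation $(\bZpt)\phat\otimes\Hom(\bZpt,\bZp)\to\bZp$ --- to be ``in essence classical local duality'' and flag it as the main obstacle. That step is not normalization bookkeeping: it is the entire content of the theorem, and nothing in your outline supplies an argument for it.

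Concretely, two independent comparisons are needed, and the paper's proof consists of exactly these. First, the isomorphism $\pi_{-1}J\iso\Hom(\bZpt,\bZp)$ entering the standard equivalence is defined via the $p$-adically interpolated Adams operations (Devinatz--Hopkins), whereas everything on the arithmetic side is expressed in \'etale/Galois cohomology via Thomason descent; relating the two is the comparison $\gamma=\alpha$, which the paper proves by mapping to $K(\bFl)$ for an auxiliary prime $l$ topologically generating $\bZpt$, using Quillen's theorem that $j\to K(\bFl)\phat$ is an equivalence, the fact that the Frobenius corresponds to $\psi^{l}$, and class field theory ($G_{S}^{\ab}\iso\bZpt$). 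Your proposal never addresses this translation. Second, identifying the $K$-theoretic local duality pairing with the classical one requires the cohomological characterization of the Artin symbol, $\rho(\theta(x))=-\mathrm{inv}(x\cup\rho)$, together with the fact that the Hasse-invariant cup product computes the Anderson pairing on $\pi_{1}\otimes\pi_{-1}$ after Thomason descent and passage to the inverse limit; this is the paper's $\eta_{p}=\gamma_{p}$ step, and it is precisely where the unit scalar and sign you worry about get pinned down (one then concludes $\eta=\gamma$ from injectivity of $q^{*}$). Without these two arguments your proposal reduces the theorem to an equivalent unproved assertion rather than proving it.
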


We postpone the proof to Section~\ref{sec:classfield}.

\section{Self-duality of the fiber sequence of the cyclotomic trace}
\label{sec:sldnl}\label{sec:fiberdual}

In this section, we extend the analysis from the previous section by
showing that the fiber sequence defining $\HFC$ is self-dual.  This
requires the compatibility of $K$-theoretic local and global duality
discussed in Section~\ref{sec:local-review}.

\begin{thm}\label{thm:selfdual}
The following diagram commutes up to the indicated sign
\[
\xymatrix@C-1pc{%
\HFC\ar[r]^{\rho}\ar[d]_{\simeq}
&L_{K(1)}K(\bZ)\ar[r]^{\kappa}\ar[d]_{\simeq}
&L_{K(1)}K(\bZp)\ar[r]^{\partial}\ar[d]_{\simeq}\ar@{{}{}{}}[dr]|{\textstyle (-1)}
& \Sigma \HFC \ar[d]_{\simeq} \\
\Sigma^{-1}I_{\bZp}(L_{K(1)}K(\bZ)) \ar[r]_-{\Sigma^{-1}I_{\bZp}\rho\mystrut}
&\Sigma^{-1}I_{\bZp}(\HFC)\ar[r]_{I_{\bZp}\partial\mystrut}
&I_{\bZp} L_{K(1)} K(\bZp) \ar[r]_{-I_{\bZp}\kappa\mystrut}
& I_{\bZp}L_{K(1)}K(\bZ)  
}
\]
where the top sequence is the cofiber sequence (associated to the
fiber sequence) defining $\HFC$, the bottom sequence is the
$\bZp$-Anderson dual of its rotation, and the vertical maps are induced by
the $K$-theoretic Tate-Poitou duality theorem for $\bZ$
(Theorem~\ref{thm:globalduality}) and the $K$-theoretic
local duality theorem for $\bZp$ (Theorem~\ref{cor:localduality}). 
\end{thm}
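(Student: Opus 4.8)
The plan is to recognize all four vertical maps as adjoints of the two duality pairings of Section~\ref{sec:local-review}, and then to verify the three squares by passing back from the maps to those pairings. Throughout write $R=L_{K(1)}K(\bZ)$ and $A=\Sigma^{-1}I_{\bZp}\bS$, so that $\Sigma^{-1}I_{\bZp}M=F(M,A)$ for every spectrum $M$. First I would record the algebraic structure of the top row. Since the completion map is a map of $E_\infty$-ring spectra, \eqref{eq:fibloc} is a cofiber sequence of $R$-modules: the fiber $\HFC$ is a non-unital $E_\infty$-ring whose multiplication $m$ is compatible with the $R$-action in the sense that $m=\mu\circ(\rho\smaL\id)$, where $\mu\colon R\smaL\HFC\to\HFC$ is the structure map, and $\rho$, $\kappa$ and the connecting map $\partial$ are all $R$-module maps. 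With this in hand, the global duality equivalence of Theorem~\ref{thm:globalduality} is the adjoint of the pairing $P=u_\bQ\circ\mu\colon R\smaL\HFC\to A$, and the local duality equivalence of Corollary~\ref{cor:localduality} is the adjoint of the pairing $Q\colon L_{K(1)}K(\bZp)\smaL L_{K(1)}K(\bZp)\to I_{\bZp}\bS$ given by multiplication followed by $v_{\bZp}$. The four vertical maps are then, respectively, the adjoint of $P$ in its $R$-slot (first column), the adjoint of $P$ in its $\HFC$-slot (second column), the adjoint of $Q$ (third column), and the suspension of the first column (fourth column, via $I_{\bZp}R=\Sigma\Sigma^{-1}I_{\bZp}R$); the second map is an equivalence because $R$ is Anderson reflexive --- its homotopy being degreewise finitely generated over $\bZp$ --- and the fourth because the first is.

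Next I would un-adjoint each square: since every map in sight lands in a function spectrum, and adjunction identifies maps into $F(-,-)$ with pairings, a square commutes up to a sign exactly when the two induced pairings agree up to that sign. The leftmost square un-adjoints to the assertion that the pairing $\HFC\smaL\HFC\to A$ sending $x\smaL x'$ to $u_\bQ(\rho(x')\cdot x)$ is symmetric, which holds because $\rho(x')\cdot x=m(x'\smaL x)$ and $m$ is commutative. The middle square un-adjoints to the equality of the two pairings $R\smaL L_{K(1)}K(\bZp)\to I_{\bZp}\bS$ sending $r\smaL a$ to $\Sigma u_\bQ(r\cdot\partial(a))$ and to $v_{\bZp}(\kappa(r)\cdot a)$; here $r\cdot\partial(a)=\partial(\kappa(r)\cdot a)$ since $\partial$ is $R$-linear and the $R$-action on $L_{K(1)}K(\bZp)$ is multiplication along $\kappa$, while $\Sigma u_\bQ\circ\partial=v_{\bZp}$ by the compatibility of the local and global duality maps recorded just before Theorem~\ref{thm:globalduality}, so the pairings agree. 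The rightmost square un-adjoints to the analogous comparison of $a\smaL r\mapsto\Sigma u_\bQ(r\cdot\partial(a))$ with $a\smaL r\mapsto v_{\bZp}(a\cdot\kappa(r))$; by the previous computation together with commutativity of $L_{K(1)}K(\bZp)$ these two pairings are in fact equal, so the square commutes exactly up to the sign $(-1)$ carried by the factor $-I_{\bZp}\kappa$ in the displayed bottom row --- which is the content of the theorem.

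The main obstacle is the bookkeeping needed to make these two reductions rigorous. Identifying the vertical maps with the stated adjoints of $P$ and $Q$ requires, for the second column, unwinding the double-Anderson-duality isomorphism $\Sigma^{-1}I_{\bZp}\Sigma^{-1}I_{\bZp}R\simeq R$ and checking it is compatible with the $R$-module structures, and, for the fourth column, tracking the suspension carefully. The more delicate point is matching the function-spectrum and Puppe-sequence sign conventions of the introduction both when un-adjointing and when forming ``the $\bZp$-Anderson dual of the rotation'' of \eqref{eq:fibloc}: it is precisely this sign accounting that forces the factor $-I_{\bZp}\kappa$ in the displayed bottom row and thereby concentrates the residual sign in the rightmost square rather than spreading it over several squares. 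The only non-formal ingredient is the identity $\Sigma u_\bQ\circ\partial=v_{\bZp}$, which is already in hand from Section~\ref{sec:local-review}; everything else is careful sign-tracking.
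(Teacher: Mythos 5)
Your proposal is correct and follows essentially the same route as the paper's proof: both un-adjoint the squares to equalities of pairings, verify the $\kappa$/$\partial$ duality using $R$-linearity of the connecting map together with the compatibility $v_{\bZp}=\Sigma u_{\bQ}\circ\partial$ recorded in Section~\ref{sec:local-review}, and reduce the remaining square to the commutativity of the non-unital $E_\infty$ multiplication on $\HFC$ in the stable category, with the residual $(-1)$ absorbed into the sign conventions for the rotated dual sequence.
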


\begin{proof}
The assertion is that $\Sigma \rho$ is $\bZp$-Anderson dual to
$\rho$ and $\kappa$ is $\bZp$-Anderson dual to $\partial$.
Given pairings 
\[
\epsilon_{i} \colon A_{i}\smaL B_{i}\to I_{\bZp}\bS
\]
whose adjoints $\eta_{i} \colon B_{i}\to F(A_{i},I_{\bZp}\bS)$
are weak
equivalences, then for maps $f\colon A_{1}\to A_{2}$ and $g\colon
B_{2}\to B_{1}$, $\eta_{1}\circ g\circ \eta_{2}^{-1}$ is $\bZp$-Anderson dual
to $f$ exactly when the diagram
\[
\xymatrix{%
A_{1}\smaL B_{2}\ar[r]^{\id\smaL g}\ar[d]_{f\smaL \id}
&A_{1}\smaL B_{1}\ar[d]^{\epsilon_{1}}\\
A_{2}\smaL B_{2}\ar[r]_{\epsilon_{2}}
&I_{\bZp}\bS
}
\]
commutes.  In this case, when the weak equivalences $\eta_{1}$,
$\eta_{2}$ are fixed and understood, we say that $g$ is
$\bZp$-Anderson dual to $f$. 
By construction, the following diagram commutes
\[
\xymatrix{%
L_{K(1)}K(\bZ)\smaL L_{K(1)}K(\bZp)
  \ar[r]^-{\id\smaL \partial}\ar[d]_{\kappa \smaL \id}
&L_{K(1)}K(\bZ)\smaL \Sigma \HFC\ar[r]^-{\xi}
&\Sigma \HFC\ar[d]^{\Sigma u_{\bZ}}\\
L_{K(1)}K(\bZp)\smaL L_{K(1)}K(\bZp)\ar[r]_-{\mu}
&L_{K(1)}K(\bZp)\ar[r]_-{v_{\bZp}}\ar[ur]^-{\partial}
&I_{\bZp}\bS
}
\]
where $\mu$ denotes the multiplication, $\xi$ denotes the
$L_{K(1)}K(\bZ)$-module action map, and $u$ and $v$ are the maps
in the global and local duality theorems, respectively.  
This gives the duality between $\partial$ and $\kappa$.
To compare $\rho$ and $\Sigma \rho$, consider the diagram
\[
\xymatrix{%
\HFC\smaL \Sigma \HFC\ar[r]^-{\id\smaL \Sigma \rho}\ar[d]_{\rho \smaL \id}
&\HFC\smaL \Sigma L_{K(1)}K(\bZ)\ar[d]\\
L_{K(1)}K(\bZ)\smaL \Sigma \HFC\ar[r]&I_{\bZp}\bS
}
\]
where the unlabeled maps are induced by the duality pairing.  The
down-then-right composite is $u_{\bZ}$ composed with the suspension of
the non-unital multiplication on $\HFC$, whereas the right-then-down
composite is $u_{\bZ}$ composed with the suspension of
the opposite of the non-unital multiplication on $\HFC$.  Since the
non-unital multiplication on $\HFC$ is $E_{\infty}$ and in particular
commutative in the stable category, the diagram commutes.
\end{proof}

As an immediate consequence, we get duality between the 
cofiber sequences in Corollary~\ref{cor:split}.
Theorem~\ref{thm:stdmap} indicates the relationship between the $j$
and $j'$ sequences.  The relationship between the remaining ones is
summarized in the following corollary.

\begin{cor}
For each $i$ in $\bZ/(p-1)$, the following diagram commutes up to the
indicated sign
\[
\xymatrix{%
X_{i}\ar[r]^-{\rho_{i}}\ar[d]_{\simeq}
&Y_{i}\ar[r]^-{\kappa}_{i}\ar[d]_{\simeq}
&Z_{i}\ar[r]^-{\partial_{i}}\ar[d]_{\simeq}\ar@{{}{}{}}[dr]|{\textstyle (-1)}
&\Sigma X_{i}\ar[d]_{\simeq}\\
\Sigma^{-1}I_{\bZp}(Y_{p-i})\ar[r]_-{\mystrut\Sigma^{-1}I_{\bZp}\rho_{p-i}}
&\Sigma^{-1}I_{\bZp}(X_{p-i})\ar[r]_-{\mystrut I_{\bZp}\partial_{p-i}}
&I_{\bZp}(Z_{p-i})\ar[r]_{\mystrut -I_{\bZp}\kappa_{p-i}}
&I_{\bZp}(Y_{p-i})
}
\]
where the top sequence is the cofiber sequence (associated to the
fiber sequence) defining $X_{i}$, the bottom sequence is the
$\bZp$-Anderson dual of its rotation, and the vertical maps are
induced by the maps
\begin{gather*}
X_{i}\to \HFC\to \Sigma^{-1}I_{\bZp}(L_{K(1)}K(\bZ)) \to
  \Sigma^{-1}I_{\bZp}(Y_{p-i})\\
Y_{i}\to L_{K(1)}K(\bZ)\to \Sigma^{-1}I_{\bZp}(\HFC)\to
   \Sigma^{-1}I_{\bZp}(X_{p-i})\\
Z_{i}\to L_{K(1)}K(\bZp)\to I_{\bZp}(L_{K(1)}K(\bZp))\to
   I_{\bZp}(Z_{p-i})
\end{gather*}
arising from local and global $K$-theoretic duality.
\end{cor}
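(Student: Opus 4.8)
The plan is to deduce the corollary from Theorem~\ref{thm:selfdual} by restricting its commutative diagram to the appropriate wedge summands; the point is that every map occurring in that diagram is block-diagonal for the canonical splittings. First I would record the diagonality of the three horizontal maps. By Corollary~\ref{cor:split} together with Theorem~\ref{thm:diag}, the map $\rho\colon \HFC\to L_{K(1)}K(\bZ)$ is trivial on $J'$ and is $\rho_{i}\colon X_{i}\to Y_{i}$ on each $X_{i}$; the map $\kappa\colon L_{K(1)}K(\bZ)\to L_{K(1)}K(\bZp)$ is the identity on $J$, is $\kappa_{i}\colon Y_{i}\to Z_{i}$ on each $Y_{i}$, and has no component involving $\Sigma J'$; and $\partial\colon L_{K(1)}K(\bZp)\to \Sigma \HFC$ is the identity on $\Sigma J'$, is $\partial_{i}\colon Z_{i}\to \Sigma X_{i}$ on each $Z_{i}$, and has no component involving $J$. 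Since $I_{\bZp}$ carries finite wedges to finite wedges, the duals $I_{\bZp}\rho$, $I_{\bZp}\kappa$, $I_{\bZp}\partial$ are block-diagonal in the dual sense; in particular $I_{\bZp}\kappa$ sends $I_{\bZp}Z_{j}$ to $I_{\bZp}Y_{j}$, is the identity on $I_{\bZp}J$, and kills $I_{\bZp}\Sigma J'$, and similarly for $I_{\bZp}\partial$.

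Next I would record the diagonality of the vertical equivalences. For the first, $\HFC\to \Sigma^{-1}I_{\bZp}L_{K(1)}K(\bZ)$, this is precisely what parts (i)--(iii) of the theorem identifying $X_{i}$ with $\Sigma^{-1}I_{\bZp}Y_{p-i}$ assert: the Tate--Poitou pairing is orthogonal for the splittings, pairing $X_{i}$ only with $Y_{p-i}$ and $J'$ only with $J$, so this equivalence carries $X_{i}$ to $\Sigma^{-1}I_{\bZp}Y_{p-i}$ and $J'$ to $\Sigma^{-1}I_{\bZp}J$. The second equivalence $L_{K(1)}K(\bZ)\to \Sigma^{-1}I_{\bZp}\HFC$ is the adjoint of the same pairing in the other variable, and the isomorphisms $[Y_{i},\Sigma^{-1}I_{\bZp}X_{j}]\iso[X_{j},\Sigma^{-1}I_{\bZp}Y_{i}]$, $[Y_{i},\Sigma^{-1}I_{\bZp}J']\iso[J',\Sigma^{-1}I_{\bZp}Y_{i}]$, $[J,\Sigma^{-1}I_{\bZp}X_{j}]\iso[X_{j},\Sigma^{-1}I_{\bZp}J]$ coming from the symmetry of the smash product show that it too is block-diagonal, carrying $Y_{i}$ to $\Sigma^{-1}I_{\bZp}X_{p-i}$ and $J$ to $\Sigma^{-1}I_{\bZp}J'$. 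The third equivalence $L_{K(1)}K(\bZp)\to I_{\bZp}L_{K(1)}K(\bZp)$ is block-diagonal by a degree count: with $Z_{i}\htp \Sigma^{2i-1}L$ and $\Sigma^{-1}I_{\bZp}L\htp \Sigma^{-1}L$, the concentration of $L^{*}L$ in degrees divisible by $2(p-1)$ forces $[Z_{i},I_{\bZp}Z_{j}]=0$ unless $i+j\equiv1$ mod $(p-1)$, and $[Z_{i},I_{\bZp}J]$, $[Z_{i},I_{\bZp}\Sigma J']$ vanish once the degenerate indices $i\equiv0,1$ are separated off using Propositions~\ref{prop:X0} and~\ref{prop:X1}; in any case, any off-diagonal component of this third equivalence out of $Z_{i}$ or $Y_{i}$ is annihilated by the block-diagonal $I_{\bZp}\kappa$ or $I_{\bZp}\partial$ followed by the final projection, so it will not matter. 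Consequently the maps the corollary displays as composites of an inclusion, a vertical equivalence, and a projection are exactly the block-diagonal pieces of the vertical equivalences, hence weak equivalences, and the fourth column is the suspension of the first.

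With all of this in hand, I would restrict each of the three squares of Theorem~\ref{thm:selfdual} by precomposing with the inclusion of the $i$th summand along the top row and postcomposing with the projection onto the $(p-i)$th summand along the bottom row: by the diagonality recorded above, each restricted square is exactly the corresponding square of the corollary, with the sign $(-1)$ on the rightmost square inherited verbatim. The hard part is not the commutativity or the sign --- those are formal once Theorem~\ref{thm:selfdual} is granted --- but the bookkeeping: keeping the permutation $i\mapsto p-i$ modulo $p-1$ straight across all four columns and through the Anderson-dual rotation of the bottom cofiber sequence, and confirming that the statement still reads correctly at the degenerate indices $i\equiv0,1$, where $Y_{0}\htp *$, $X_{0}\htp\Sigma^{-2}L$, $X_{1}\htp*$, and $Y_{1}\htp Z_{1}\htp\Sigma L$ by Propositions~\ref{prop:ymo}, \ref{prop:yz}, \ref{prop:X0}, and~\ref{prop:X1}.
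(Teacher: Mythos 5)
Your overall strategy---restricting the three squares of Theorem~\ref{thm:selfdual} to wedge summands, using the diagonality of $\rho$, $\kappa$, $\partial$ from Theorem~\ref{thm:diag} and Corollary~\ref{cor:split} together with the orthogonality statements (i)--(iii) of the theorem identifying $X_{i}$ with $\Sigma^{-1}I_{\bZp}Y_{p-i}$---is the argument the paper intends, and your treatment of the commutativity and of the sign is correct; in particular, the observation that any off-diagonal components of the local duality equivalence are killed by the block-diagonal maps $I_{\bZp}\kappa$, $I_{\bZp}\partial$ and the final projections is exactly what makes the restricted squares commute.

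There is, however, a genuine gap in your justification that the third vertical map $Z_{i}\to I_{\bZp}(Z_{p-i})$ is a weak equivalence. You assert that $[Z_{i},I_{\bZp}J]$ and $[Z_{i},I_{\bZp}\Sigma J']$ vanish ``once the degenerate indices $i\equiv 0,1$ are separated off using Propositions~\ref{prop:X0} and~\ref{prop:X1}'', but those propositions concern $X_{0}$ and $X_{1}$ and say nothing about these groups, and the vanishing is in fact false for $i\equiv 0$: one has $[Z_{0},I_{\bZp}\Sigma J']\iso [J',L]\iso \Lambda/(1-\psi^{l})\iso \bZp\neq 0$ (and dually $[\Sigma J',I_{\bZp}Z_{0}]\neq 0$), and you never address the components out of $J$ and $\Sigma J'$ into $I_{\bZp}Z_{j}$ at all. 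So the local duality equivalence need not be block-diagonal for the splitting, and the step ``block-diagonal piece of an equivalence, hence an equivalence'' is not available for this column (a component of an equivalence of wedges with possibly nonzero off-diagonal entries need not be an equivalence). The repair is easy and requires no such computation: once the restricted squares are known to commute (up to the indicated sign) and the first, second, and fourth verticals are weak equivalences---the first and fourth by the eigensplitting theorem, the second by your symmetry argument, all of whose needed vanishing statements really do hold---both rows are cofiber sequences, and the five lemma applied to the resulting ladder of long exact sequences of homotopy groups (the sign does not affect being an isomorphism) shows that $Z_{i}\to I_{\bZp}(Z_{p-i})$ is a weak equivalence as well.
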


In the case of primes that satisfy the Kummer-Vandiver condition, we
can be a bit more specific.  A prime $p$ \term{satisfies the
Kummer-Vandiver condition} when $p$ does not divide the order of the
class group of $\bZ[\zeta_{p}+\zeta_{p}^{-1}]$ for $\zeta_{p}=e^{2\pi
i/p}$.  In this case, Dwyer-Mitchell~\cite[12.2]{DwyerMitchell}
identifies the homotopy type of the spectra $Y_{i}$ in terms of the
Kubota-Leopoldt $p$-adic $L$-function: Given any power series $f$ in
the $p$-adic integers, there is a unique self-map $\phi_{f}$ on $L$ in
the stable category such that on $\pi_{2(p-1)n}$, $\phi_{f}$ is
multiplication by $f((1+p)^{n}-1)$ (cf.~\cite[2.4]{Mitchell-padic}).
A celebrated theorem of Iwasawa~\cite{Iwasawa-Lfunctions} implies in
this context that for $i=2,4,\ldots,p-3$, there exists a self-map of
$\Sigma^{2i-1}L$ which on $\pi_{2n+1}$ is multiplication by the value
of the Kubota-Leopoldt $p$-adic $L$-function $L_{p}(-n,\omega^{i})$.
The spectrum $Y_{i}$ is non-canonically weakly equivalent to the
homotopy fiber of this map.  The $\bZp$-Anderson self-duality of $L$
then identifies $I_{\bZp}Y_{i}$ as (non-canonically) weakly equivalent
to the fiber of the self-map of $\Sigma^{2-2i}L$ that on $\pi_{2n}$ is
multiplication by $L_{p}(n,\omega^{i})$.  In particular, for
$j=3,5,\ldots, p-2$, $X_{j}\simeq \Sigma^{-1}I_{\bZp}Y_{p-j}$ is then non-canonically weakly equivalent
to the homotopy fiber of the self-map of $\Sigma^{2j-1}L$ that on
$\pi_{2n-1}$ is multiplication by $L_{p}(n,\omega^{1-j})$, or
equivalently, on $\pi_{2n+1}$ is multiplication by
$L_{p}(n+1,\omega^{1-j})$.  For $i$ 
odd and for $j$ even, $Y_{i}$ and $X_{j}$ are non-canonically weakly
equivalent to $\Sigma^{2i-1} L$ and $\Sigma^{2j}L$, respectively.
(Independently of the Kummer-Vandiver condition $Y_{0}\simeq *$ and
$X_{1}\simeq *$ by Propositions~\ref{prop:ymo} and~\ref{prop:X1}.)

In the case of an odd regular prime, the relevant values of the
Kubota-Leopoldt $p$-adic $L$-functions are units, and the spectra
$X_{j}$ and $x_{j}$ are trivial for $j$ odd.  This is consistent with Rognes'
computation~\cite[3.3]{Rognesp} of the homotopy fiber of the
cyclotomic trace as (non-canonically) weakly equivalent to $j\vee
\Sigma^{-2}ko\phat$ in this case.   More generally, we have the
following corollary.

\begin{cor}
Let $p$ be an odd prime that satisfies the Kummer-Vandiver condition.
The cofiber sequence 
\[
\HFT \to
K(\bZ)\phat\to TC(\bZ)\phat\to
\Sigma \HFT
\]
is (non-canonically) weakly equivalent to the wedge of the cofiber sequences
\[
\xymatrix@C-.5pc@R-1.5pc{%
\relax *\ar[r]&j\ar[r]^{=}&j\ar[r]&\relax* \\
j'\ar[r]&\relax*\ar[r]&\Sigma j'\ar[r]^{=}&\Sigma j' \\
x_{i}\ar[r]&\Sigma^{2i-1}\ell\ar[r]^{\lambda^{o}_{i}}&\Sigma^{2i-1}\ell\ar[r]&\Sigma x_{i} &i=3,5,\ldots,p-2\\
\Sigma^{2i}\ell\ar[r]&y_{i}\ar[r]&\Sigma^{2i-1}\ell\ar[r]^{\Sigma \lambda^{e}_{i}}&\Sigma^{2i-1}\ell&i=2,4,\ldots,p-3\\
\relax *\ar[r]&\Sigma^{2p-1} \ell\ar[r]^{=}&\Sigma^{2p-1}\ell\ar[r]&\relax* &(i=1)\\
\Sigma^{-2}\ell\ar[r]&\relax*\ar[r]&\Sigma^{-1} \ell\ar[r]^{=}&\Sigma^{-1}\ell&(i=0)
}
\]
where $\lambda^{o}_{i}$ is the unique self-map of
$\Sigma^{2i-1}\ell$ that on $\pi_{2n+1}$ is multiplication by the value
$L_{p}(n+1,\omega^{p-i})$, $\lambda^{e}_{i}$ is the unique self-map of
$\Sigma^{2i-1}\ell$ that on $\pi_{2n+1}$ is multiplication by the value
$L_{p}(-n,\omega^{i})$, and $L_{p}$ denotes the Kubota-Leopoldt
$p$-adic $L$-function. 
\end{cor}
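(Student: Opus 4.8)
The plan is to combine three ingredients already in hand---the canonical splitting of Corollary~\ref{cor:split}, the $\bZp$-Anderson self-duality of the summand cofiber sequences (the corollary to Theorem~\ref{thm:selfdual}), and the Dwyer--Mitchell identification of the spectra $Y_i$ under the Kummer--Vandiver condition---and then to pass to connective covers.

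First I would reduce to the individual summands. By Corollary~\ref{cor:split} the cofiber sequence $\HFT\to K(\bZ)\phat\to TC(\bZ)\phat\to\Sigma\HFT$ is canonically the wedge of the $j$-sequence, the $j'$-sequence, and the sequences $x_i\to y_i\to z_i\to\Sigma x_i$ for $i=0,\dots,p-2$, so it suffices to identify each summand as a cofiber sequence. The $j$- and $j'$-sequences are as displayed, by Theorem~\ref{thm:diag}. The $i=0$ sequence follows from Proposition~\ref{prop:ymo} ($Y_0\simeq*$), whence $X_0\simeq\Sigma^{-1}Z_0\simeq\Sigma^{-2}L$ and the connecting map $Z_0\to\Sigma X_0$ is an equivalence (Proposition~\ref{prop:X0}); on the relevant covers this reads $\Sigma^{-2}\ell\to*\to\Sigma^{-1}\ell\to\Sigma^{-1}\ell$. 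The $i=1$ sequence follows from $X_1\simeq*$ (Proposition~\ref{prop:X1}), which forces $Y_1\to Z_1$ to be an equivalence, and $Y_1\simeq\Sigma L$ (Proposition~\ref{prop:yz}); on $1$-connected covers this gives $*\to\Sigma^{2p-1}\ell\to\Sigma^{2p-1}\ell\to*$. None of these four cases uses any hypothesis on $p$.

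For $i=2,\dots,p-2$ the Kummer--Vandiver hypothesis enters through Dwyer--Mitchell~\cite[12.2]{DwyerMitchell}, as reviewed just before the statement: under this hypothesis, for $i$ even, $Y_i$ is (non-canonically) the homotopy fiber of the self-map $\phi_i$ of $\Sigma^{2i-1}L$ that is multiplication by $L_p(-n,\omega^i)$ on $\pi_{2n+1}$---Iwasawa's theorem supplies a power series $f$ with $\phi_i=\phi_f$, and the relevant $p$-adic $L$-function is nonzero, so $\phi_i$ is nonzero, hence injective on $\pi_*$ since $L^0L$ is an integral domain acting faithfully on $\pi_*L$---while for $i$ odd $Y_i\simeq\Sigma^{2i-1}L$; in every case $Z_i\simeq\Sigma^{2i-1}L$. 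Plugging this into the theorem of the previous section, which identifies $X_i\simeq\Sigma^{-1}I_{\bZp}Y_{p-i}$, and into the Anderson self-duality $I_{\bZp}L\simeq L$ used in its proof, identifies $X_i$ up to weak equivalence. Plugging it into the corollary to Theorem~\ref{thm:selfdual}, which says that the $i$th summand cofiber sequence is, up to the indicated sign, the $\bZp$-Anderson dual of the rotation of the $(p-i)$th one, identifies the maps. Concretely I would settle the odd-parity sequences directly: here $Y_i\simeq Z_i\simeq\Sigma^{2i-1}L$, so $\kappa_i$ is a self-map of $\Sigma^{2i-1}L$, and the duality makes it correspond to the Anderson dual of the connecting map $\partial_{p-i}$ of the even-parity sequence, which is $\phi_{p-i}$ up to a unit because $Y_{p-i}=\Fib(\phi_{p-i})$; chasing this through the functional-equation-type description of the Anderson dual of an $L$-function self-map, together with $\omega^{p-(p-i)}=\omega^{i}$, forces $\kappa_i$ to be a unit multiple of the self-map $\lambda^o_i$ with $\pi_{2n+1}$-value $L_p(n+1,\omega^{p-i})$. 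Dualizing back recovers the even-parity sequences in terms of $\lambda^e_i$. Finally I would read off the $1$-connected (resp.\ $(-2)$-connected, for $i=0$) covers everywhere.

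The routine but genuinely unavoidable step---and what I expect to be the main obstacle---is the accounting: keeping track of the suspensions, the signs, and the precise arguments of $L_p$ through the Anderson duals and rotations so that the self-maps land on the asserted shifts of $\ell$ with the asserted arguments, and checking that passing to connective covers is compatible with forming the homotopy fibers of these self-maps. The conceptual ingredients---the canonical splitting, the self-duality, and the Dwyer--Mitchell/Iwasawa description of the $Y_i$---are all already established.
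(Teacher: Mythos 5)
Your proposal is correct and is essentially the paper's own argument: the corollary is obtained by combining the canonical splitting of Corollary~\ref{cor:split} (together with Propositions~\ref{prop:ymo}, \ref{prop:yz}, \ref{prop:X0}, \ref{prop:X1} for the $j$, $j'$, $i=0$, $i=1$ rows) with the Dwyer--Mitchell/Iwasawa identification of the $Y_{i}$ under Kummer--Vandiver, the identification $X_{i}\simeq \Sigma^{-1}I_{\bZp}Y_{p-i}$ plus the Anderson self-duality of $L$ --- exactly the content of the paragraph preceding the statement --- and then passage to connective covers. Your additional appeal to the self-duality of the summand cofiber sequences to pin down the maps (not just the fibers), and the deferred suspension/sign/argument bookkeeping, are precisely the elaborations the paper leaves implicit or carries out in that paragraph.
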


\section {Proof of Theorem~\ref{thm:stdmap}}\label{sec:classfield}
%
As discussed above the statement of Theorem~\ref{thm:stdmap}, maps
from $J'$ to $\HFC$ are 
determined by maps from $(\bZpt)\phat$ to $\pi_{0}\HFC$; we have two
isomorphisms of $(\bZpt)\phat$ with $\pi_{0}\HFC$ and we need to show
that they are the same.  It is slightly easier to work with
the $\bZp$-duals instead. We can canonically identify the
$\bZp$-dual of $\pi_{0}\HFC$ as
$\pi_{-1}L_{K(1)}K(\bZ)$: the fundamental short exact
sequence~\eqref{eq:fses} for $\pi_{0}\Sigma^{-1}I_{\bZp}K(\bZ)$ gives
an isomorphism
\[
\pi_{0}\Sigma^{-1}I_{\bZp}K(\bZ)\iso \Hom(\pi_{-1}L_{K(1)}K(\bZ),\bZp)
\]
since $\pi_{-2}L_{K(1)}K(\bZ)=0$ (as $Y_{0}\simeq *$).
The identification of $\pi_{-1}J$
as $\Hom(\bZpt,\bZp)$ above and the canonical map $J\to
L_{K(1)}K(\bZ)$ gives one isomorphism of 
$\pi_{-1}L_{K(1)}(\bZ)$ with $\Hom(\bZpt,\bZp)$, which we denote as
$\alpha$.  The isomorphism of $\pi_{0}\HFT$ with $(\bZpt)\phat$ as the quotient of
$\pi_{1}L_{K(1)}K(\bQp)\iso (\bQpt)\phat$ gives another isomorphism of
$\pi_{-1}L_{K(1)}(\bZ)$ with $\Hom(\bZpt,\bZp)$, which we denote as
$\eta$.  We need to prove that $\eta =\alpha$.

We have an intrinsic identification of $\pi_{-1}L_{K(1)}K(\bZ)$
coming from Thomason's descent spectral
sequence~\cite[4.1]{ThomasonEtale}, which in this case canonically
identifies  
\[
\pi_{-1}L_{K(1)}K(\bZ)\iso \pi_{-1}L_{K(1)}K(\bZ[1/p])
\]
as $H^{1}_{\et}(\Spec \bZ[1/p];\bZp)$
(continuous \'etale cohomology).  Let $\bQS$ denote the maximal
algebraic extension of $\bQ$ that is unramified except over $S=\{p\}$, and let
$G_{S}=\Gal(\bQS/\bQ)$.  The abelianization $G_{S}^{\ab}$ of $G_{S}$
corresponds to the maximal abelian extension of $\bQ$ that is
unramified except over $p$, which is $\bQ(\mu_{p^{\infty}})$ (where
$\mu_{p^{\infty}}$ denotes the group of $p^{n}$th roots unity for all
$n$).  We have the standard identification of the 
continuous \'etale cohomology $H^{1}_{\et}(\Spec \bZ[1/p];\bZp)$ as the
Galois cohomology
$H^{1}_{\Gal}(\bQS/\bQ;\bZp)$~\cite[II.2.9]{Milne-Duality2007}, which
we can identify as the abelian group of continuous homomorphisms 
\[
\Hom_{c}(G_{S},\bZp)\iso \Hom_{c}(G_{S}^{\ab},\bZp).
\]
We have a further isomorphism 
\[
G_{S}^{\ab}=\Gal(\bQ(\mu_{p^{\infty}})/\bQ)\iso
\lim_{n}\Gal(\bQ(\mu_{p^{n}})/\bQ) \iso \lim_{n} (\bZ/p^{n})^{\times} \iso\bZpt,
\]
where the first isomorphism is inverse to the isomorphism
$(\bZ/p^{n})^{\times}\iso \Gal(\bQ(\mu_{p^{n}})/\bQ)$ taking an
element $u\in (\bZ/p^{n})^{\times}$ to the automorphism of
$\bQ(\mu_{p^{n}})$ induced by the automorphism $\zeta \mapsto
\zeta^{u}$ on $\mu_{p^{n}}$.  This then constructs an isomorphism
\[
\gamma \colon \pi_{-1}L_{K(1)}K(\bZ)\to \Hom_{c}(\bZpt,\bZp)\iso \Hom(\bZpt,\bZp).
\]

First we show $\gamma =\alpha$.  Choose a prime number $l$ such that
$l$ is a topological generator of $\bZpt$, or equivalently, a
generator of $\bZ/p^{2}$, and consider the quotient map $\bZ[1/p]\to
\bZ/l=\bFl$. By celebrated work of Quillen~\cite{Quillen-KFinite}, the composite map 
\[
j\to K(\bZ)\phat\to K(\bFl)\phat
\]
is a weak equivalence and an embedding of $\bbFlt$ in $\bC^{\times}$
induces a weak equivalence $K(\bbFl)\phat\to ku\phat$ with the
automorphism $\Phi$ on $K(\bbFl)\phat$ induced by the Frobenius
automorphism of $\bbFl$ mapping to the Adams operation $\Psi^{l}$ on
$ku\phat$ (independently of the choice of embedding). We will also
write $\Phi$ for the corresponding automorphism of
$L_{K(1)}K(\bbFl)$. For any functorial model of $L_{K(1)}K(-)$, the
induced map from $L_{K(1)}K(\bFl)$ into the homotopy fixed points of
$\Phi$ (the homotopy equalizer of $\Phi$ and the identity on
$L_{K(1)}K(\bbFl)$) is a weak equivalence.  Writing
$L_{K(1)}K(\bbFl)^{h\Phi}$ for the homotopy fixed points of $\Phi$,
the map 
\[
L_{K(1)}K(\bFl)\to L_{K(1)}K(\bbFl)^{h\Phi}
\]
is the unique map that takes the unit element of
$\pi_{0}(L_{K(1)}K(\bFl))$ to the unique element of
$\pi_{0}(L_{K(1)}K(\bbFl)^{h\Phi})$ that maps to the unit element of
$\pi_{0}(L_{K(1)}K(\bbFl))$.  This gives a canonical identification of
$\pi_{-1}(L_{K(1)}K(\bFl))$ as $H^{1}(\langle \Phi
\rangle;\bZp)$, where $\langle \Phi \rangle$ denotes the cyclic group
generated by $\Phi$; we have used the canonical isomorphism
$\pi_{0}(L_{K(1)}K(\bbFl))\iso\bZp$ induced by the unit and we note
that this isomorphism is consistent with the canonical isomorphism
$\pi_{0}KU\phat\iso \bZp$ under the weak equivalence
$L_{K(1)}K(\bbFl)\to KU\phat$ (independently of the choice of the
embedding $\bbFlt\to \bC^{\times}$).  Under the identification of
$\pi_{-1}J$ as $H^{1}_{c}(\bZpt;\bZp)$ above, the composite map
\[
J\to L_{K(1)}K(\bZ)\to L_{K(1)}K(\bFl)
\]
induces on $\pi_{-1}$ the map
\[
H^{1}_{c}(\bZpt;\bZp)\to H^{1}(\langle \Phi \rangle;\bZp)
\]
induced by the inclusion of $l$ in $\bZpt$ (the inclusion of
$\Psi^{l}$ in the group of $p$-adically interpolated Adams
operations).  This gives us information about $\alpha$.
In terms of the identification of
$\pi_{-1}(L_{K(1)}K(\bFl))$ as $H^{1}_{\et}(\bFl;\bZp)$ from
Thomason's descent spectral sequence, the map
\[
H^{1}_{\et}(\bFl;\bZp)\iso
H^{1}_{\Gal}(\bbFl/\bFl;\bZp)\to 
H^{1}(\langle \Phi \rangle;\bZp)
\]
is induced by the inclusion of the Frobenius in $\Gal(\bbFl/\bFl)$.
By naturality, the composite map 
\[
H^{1}_{\et}(\bZ[\tfrac1p];\bZp)\iso
H^{1}_{c}(G_{S};\bZp)\to
H^{1}_{c}(\Gal(\bbFl;\bZp))\to
H^{1}(\langle \Phi \rangle;\bZp)
\]
is induced by the inclusion of $\Phi$ in $G_{S}$ as the automorphism
of (the $p$-integers in) $\bQS$ that does the automorphism $\zeta
\mapsto \zeta^{l}$ on $\mu_{p^{\infty}}$.  This then shows that
$\gamma =\alpha$. 

We now compare $\gamma$ and $\eta$. Here it is easiest to work first
in terms of $L_{K(1)}K(\bQp)$.  Using the standard identification of
$\pi_{1}L_{K(1)}K(\bQp)$ as the $p$-completion of the units, we have a
$\bQp$-analogue of $\eta$ using local duality:  Let 
\[
\eta_{p}\colon \pi_{-1}L_{K(1)}K(\bQp)\to \Hom((\bQpt)\phat,\bZp)\iso \Hom(\bQpt,\bZp)
\]
be the isomorphism derived from the isomorphism
$\pi_{1}L_{K(1)}K(\bQp)\iso (\bQpt)\phat$ by Anderson duality.
We then 
have a commutative diagram 
\[
\xymatrix{%
\pi_{-1}L_{K(1)}K(\bZ)\ar[r]^-{\iso}\ar[d]_{\eta}^{\iso}
&\pi_{-1}L_{K(1)}K(\bZ[\tfrac1p])\ar[r]
&\pi_{-1}L_{K(1)}K(\bQp)\ar[d]^{\eta_{p}}_{\iso}\\
\Hom(\bZpt,\bZp)\ar[rr]_-{q^{*}}&&\Hom(\bQpt,\bZp)
}
\]
by the compatibility of local and global duality.  (Here $q^{*}$ is
the map induced by the standard quotient isomorphism $\bQpt/\langle
p\rangle\iso \bZpt$.) To produce a local
analogue of $\gamma$, we use the Artin symbol
$\theta$ in
local class field theory~\cite[\S3.1]{Serre-LocalClassFieldTheory}.
The Artin symbol 
gives an isomorphism between the finite completion of the units of
$\bQp$ and the Galois group of the maximal abelian extension $\bQpab$
of $\bQp$: For $x\in \bQpt$, writing $x=ap^{m}$ for $a\in \bZpt$, the
Artin symbol takes $x$ to the unique element $\theta(x)$ of $\Gal(\bQpab/\bQp)$
that acts on the $p^{n}$th roots of unity $\mu_{p^{n}}$ by raising to
the $1/a$ power and acts on the maximal unramified extension $\bQpnr$
of $\bQp$ by the $m$th power of a lift of the Frobenius.  Using the
isomorphism 
\[
\pi_{-1}L_{K(1)}K(\bQp)\iso H^{1}_{\et}(\bQp;\bZp)
\]
from Thomason's descent spectral sequence and the canonical isomorphism 
\[
H^{1}_{\et}(\bQp;\bZp)\iso
H^{1}_{\Gal}(\bbQp/\bQp;\bZp)\iso \Hom_{c}(\Gal(\bQpab/\bQp),\bZp)
\]
(as above), the Artin symbol induces an isomorphism 
\[
-\gamma_p\colon \pi_{-1}L_{K(1)}K(\bQp)\to \Hom(\bQpt,\bZp).
\]
We have implicitly defined an isomorphism $\gamma_p$: The
formula for the Artin symbol implies that the following diagram commutes
\[
\xymatrix{%
\pi_{-1}L_{K(1)}K(\bZ)\ar[r]^{\iso}\ar[d]_{\gamma}^{\iso}
&\pi_{-1}L_{K(1)}K(\bZ[\tfrac1p])\ar[r]
&\pi_{-1}L_{K(1)}K(\bQp)\ar[d]_{\iso}^{\gamma_p}\\
\Hom(\bZpt,\bZp)\ar[rr]_-{q^{*}}\ar[d]_{=}&&\Hom(\bQpt,\bZp)\ar[d]^{\iso}\\
\Hom(\bZpt,\bZp)\ar[rr]_{(\id,0)}&&\Hom(\bZpt,\bZp)\times \bZp
}
\]
(where the bottom right vertical isomorphism is induced by the $ap^{m}$ decomposition of $\bQpt$ as $\bZpt\times \bZ$).
In other words, omitting notation for the isomorphism arising from
Thomason's descent spectral sequence and the usual isomorphism
$H^{1}_{\et}(\bQp;\bZp)\iso \Hom(\Gal(\bQpab/\bQp),\bZp)$, 
$\gamma_{p}$ is the $\bZp$-dual of $-\theta$. 
The Artin symbol has a cohomological characterization~\cite[\S2.3,
Prop.~1]{Serre-LocalClassFieldTheory}: For a character $\rho\colon
\Gal(\bbQp/\bQp)\to \bQ/\bZ$ and $x\in \bQpt$,
\[
\rho(\theta(x))=-inv(x \cup \rho)
\]
(cf.~\cite[p.~386]{NeukirchSchmidtWinberg})
where on the right we interpret $x$ as an element of
$H^{1}_{\et}(\bQp;\bZp)$ and $\rho$ as an element of
$H^{1}_{\et}(\bQp;\bQ/\bZ)$, while the symbol $\cup$ denotes the cup
product on \'etale cohomology
\[
H^{1}_{\et}(\bQp;\bZp(1))\otimes 
H^{1}_{\et}(\bQp;\bQ/\bZ)\to
H^{2}_{\et}(\bQp;\bQ/\bZ(1)),
\]
and $inv$ denotes the map induced by the Hasse invariant (q.v.~\cite[XIII\S3,
Prop.~6ff]{Serre-LocalFields}).   Because  
\[
inv(x\cup y)\colon H^{1}_{\et}(\bQp;\bZp(1))\otimes
H^{1}_{\et}(\bQp;\bZpi)\to \bQ/\bZ
\]
is the local duality pairing, restricting to 
\[
H^{1}_{\et}(\bQp;\bZ/p^{n}(1))\otimes
H^{1}_{\et}(\bQp;\bZ/p^{n})\to \bZ/p^{n},
\]
taking the inverse limit, and applying the isomorphism from
Thomason's descent spectral sequence gives us the Anderson duality pairing
\[
\pi_{1}L_{K(1)}K(\bQp)\otimes
\pi_{-1}L_{K(1)}K(\bQp)\to \bZp.
\]
We conclude that $\eta_{p}=\gamma_{p}$.  Since the map
$q^{*}\colon \Hom(\bZpt,\bZp)\to \Hom(\bQpt,\bZp)$ is an injection,
we conclude that $\eta=\gamma$.

\section{The eigenspectra of \texorpdfstring{$TC(\bZ)\phat$}{TC(Z)p}}
\label{sec:Zi}

As shown in~\cite[(2.4)]{BM-KSpi}, $TC(\bZ)\phat$ admits a canonical splitting
with summands $j$, $\Sigma j'$, and the spectra denoted in
Section~\ref{sec:review} as $z_{i}$, for $i=0,\dotsc,p-2$.  As
indicated there, $z_{i}$ is non-canonically weakly equivalent to
$\Sigma^{2i-1}\ell$ (for $i\neq 1$) or  $\Sigma^{2p-1}\ell$ (for
$i=1$). The purpose of this section is to give an
identification of these summands in intrinsic terms.  We work in terms
of the $K(1)$-localizations $Z_{i}$, and our main result is to
explain the perspective that $Z_{i}$ is $\Sigma^{2i-1} L$ tensored
over $\Lambda \iso [L,L]$ with a free $\Lambda$-module and to identify
that $\Lambda$-module intrinsically in number theoretic terms; see
Corollary~\ref{cor:DMZto} for a precise statement.  The remainder of
the section discusses the problem of finding a generator for this free
module.  The number theory literature discusses several approaches,
which we review.  The content of this section is independent of the
other sections. 

Let 
\[
Z=Z_{0}\vee \dotsb \vee Z_{p-2}=L_{K(1)}z_{0}\vee\dotsb\vee L_{K(1)}z_{p-2}.
\]
Since
$z_{i}$ is the $1$-connected cover (for $i\neq 0$) or 
$(-2)$-connected cover (for $i=0$) of $Z_{i}=L_{K(1)}z_{i}$, to
identify $z_{i}$, it suffices to identify $Z_{i}$.  Since $Z_{i}$
is non-canonically weakly equivalent to $\Sigma^{2i-1}L$ (for $L$ the
Adams summand of $KU\phat$), we have that $[Z_{i},Z_{j}]=0$ for $i\neq
j$. The decomposition of $Z$ into the summands $Z_{i}$ is therefore unique,
and so it suffices to identify $Z$.  It follows from
Hesselholt-Madsen~\cite[Th.~D, Add~6.2]{HM2}
that $TC$ of the $p$-completion map and the cyclotomic trace
\[
TC(\bZ)\to TC(\bZp)\from K(\bZp)
\]
are $K(1)$-equivalences. These maps induce a weak equivalence from $Z$
to a summand of $L_{K(1)}K(\bZp)$, which
Dwyer-Mitchell~\cite[\S13]{DwyerMitchell} identifies in
terms of units of cyclotomic
extensions of $\bQp$.  We now review this identification.

Let
$F_{n}=\bQp(\mu_{p^{n+1}})$ (with $F_{-1}=\bQp$), and let
$E_{n}=(F_{n}^{\times}/\text{torsion})\phat$ (where $\mu_{p^{n+1}}$
denotes the $p^{n+1}$ roots of unity in some algebraic closure of
$\bQp$).  The norm (in Galois theory) 
gives maps $E_{n}\to E_{n-1}$; let $E_{\infty}$ be the inverse limit.
The Galois group $\Gal(F_{n}/\bQp)$ is canonically isomorphic to
$(\bZ/p^{n+1})^{\times}$ and this makes $E_{n}$ a $p$-complete
$\bZp[(\bZ/p^{n+1})^{\times}]$-module.  The norm $E_{n}\to E_{n-1}$ is a
$\bZp[(\bZ/p^{n+1})^{\times}]$-module map and so in the inverse limit
$E_{\infty}$ is a module over the Iwasawa algebra
\[
\Lambda'=\bZp[[\bZpt]]=\lim \bZp[(\bZ/p^{n+1})^{\times}].
\]
As a completed group ring, $\Lambda'$ comes with an (anti)involution
$\chi$ that sends the group elements to their inverses; for a
$\Lambda'$-module $M$, we denote by $M^{\chi}$ the $\Lambda'$-module
obtained via this involution.
Coincidentally, $\Lambda'\iso [KU\phat,KU\phat]$ via the map that
takes an element $a$ of $\bZpt$ to the ($p$-adically
interpolated) Adams operation $\psi^{a}$ (which exists for
$p$-completed topological $K$-theory), and in particular, we have a
canonical action of $\Lambda'$ on $KU\phat$ (in the stable category).
Dwyer-Mitchell~\cite[\S13]{DwyerMitchell} then shows that
$E_{\infty}$ (denoted there as $E'_{\infty}(\text{red}))$ is free of
rank $1$ as a $\Lambda'$-module, observes that
\[
[Z,KU\phat]=(KU\phat)^{0}(Z)=0
\]
(cf.~[ibid., 6.11]), and constructs a canonical isomorphism 
\begin{equation}\label{eq:Edual}
[\Sigma^{-1}Z,KU\phat]=(KU\phat)^{1}(Z)\iso \Hom_{\Lambda'}(E_{\infty},\Lambda')^{\chi}
\end{equation}
of $\Lambda'$-modules
(cf.~[ibid., 8.10]).
For formal reasons, this characterizes
$Z$ in the stable category via the Yoneda Lemma (see [ibid., 4.12]).

\begin{thm}[Dwyer-Mitchell~{\cite[9.1]{DwyerMitchell}}]\label{thm:DMZ}
For any spectrum $X$, the natural map 
\begin{multline*}
[X,\Sigma^{-1}Z]\to \Hom_{[KU\phat,KU\phat]}([\Sigma^{-1}Z,KU\phat],[X,KU\phat])\\
\iso \Hom_{\Lambda'}(\Hom_{\Lambda'}(E_{\infty},\Lambda')^{\chi},[X,KU\phat])
\iso E_{\infty}^{\chi}\otimes_{\Lambda'}[X,KU\phat]
\end{multline*}
is an isomorphism.
\end{thm}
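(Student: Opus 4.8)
The plan is to argue by the Yoneda lemma, reducing everything to the tautological case of maps into $KU\phat$ itself. By~\eqref{eq:Edual} together with the Dwyer--Mitchell computation that $E_{\infty}$ is free of rank one over $\Lambda'$, the group $[\Sigma^{-1}Z,KU\phat]\iso\Hom_{\Lambda'}(E_{\infty},\Lambda')^{\chi}$ is a free $\Lambda'$-module of rank one, and the two isomorphisms
\[
\Hom_{\Lambda'}([\Sigma^{-1}Z,KU\phat],[X,KU\phat])\iso\Hom_{\Lambda'}(\Hom_{\Lambda'}(E_{\infty},\Lambda')^{\chi},[X,KU\phat])\iso E_{\infty}^{\chi}\otimes_{\Lambda'}[X,KU\phat]
\]
appearing in the statement are then routine: the first is~\eqref{eq:Edual}, and the second is the evaluation isomorphism $\Hom_{\Lambda'}(\Hom_{\Lambda'}(P,\Lambda'),M)\iso P\otimes_{\Lambda'}M$ valid for $P$ finitely generated projective (here free of rank one), transported along the involution $\chi$. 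So it suffices to show that the composition pairing
\[
\Phi_{X}\colon [X,\Sigma^{-1}Z]\to\Hom_{\Lambda'}([\Sigma^{-1}Z,KU\phat],[X,KU\phat]),\qquad f\mapsto(g\mapsto g\circ f),
\]
is an isomorphism of abelian groups.

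First I would observe that $\Phi_{X}$ is the value at $W=\Sigma^{-1}Z$ of a natural transformation of covariant functors of $W$, from $W\mapsto[X,W]$ to $W\mapsto\Hom_{\Lambda'}([W,KU\phat],[X,KU\phat])$: for a map $h\colon W'\to W$ the precomposition map $h^{*}$ on $[-,KU\phat]$ induces, via the contravariant functor $\Hom_{\Lambda'}(-,[X,KU\phat])$, a map $\Hom_{\Lambda'}([W',KU\phat],[X,KU\phat])\to\Hom_{\Lambda'}([W,KU\phat],[X,KU\phat])$ in the same direction as $h_{*}\colon[X,W']\to[X,W]$, and one checks the evident square commutes. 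Both functors are additive in $W$ (they take finite wedges to finite direct sums) and carry retracts to retracts, compatibly with $\Phi$. For $W=KU\phat$ we have $[KU\phat,KU\phat]=\Lambda'$, so the target is $\Hom_{\Lambda'}(\Lambda',[X,KU\phat])=[X,KU\phat]$ and $\Phi_{X}$ is the identity, hence an isomorphism.

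Next I would deduce the case $W=\Sigma^{-1}Z$ from the case $W=KU\phat$. The key point is that $\Sigma^{-1}Z$ is non-canonically weakly equivalent to a finite wedge of copies of $KU\phat$: each $Z_{i}$ is non-canonically weakly equivalent to $\Sigma^{2i-1}L$ for the Adams summand $L$, which is a wedge summand of $KU\phat$, and since $\Sigma^{2(p-1)}L\simeq L$ the shifts $2i-2\bmod 2(p-1)$ for $i=0,\dotsc,p-2$ exhaust the even residues, so in fact $\Sigma^{-1}Z\simeq KU\phat$. (More generally $\Sigma^{-1}Z$ is a retract of a finite wedge of copies of $KU\phat$, which is all the argument needs.) Choosing such an equivalence $h\colon KU\phat\to\Sigma^{-1}Z$ and applying naturality of $\Phi$ in $W$ to $h$ produces a commuting square whose top arrow is $\Phi_{X}$ for $KU\phat$, hence an isomorphism, and whose vertical arrows---induced by $h_{*}$ and by precomposition with $h^{*}$---are isomorphisms because $h$ is a weak equivalence; therefore the bottom arrow $\Phi_{X}$ for $\Sigma^{-1}Z$ is an isomorphism, as desired.

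The argument is essentially formal, and the genuine content is packed into the cited results---the Dwyer--Mitchell computation of $E_{\infty}$, the construction of the canonical isomorphism~\eqref{eq:Edual}, and the identification $Z_{i}\simeq\Sigma^{2i-1}L$---so I do not anticipate a real obstacle. The fiddliest points are purely bookkeeping: confirming that the composition pairing is natural in $W$ in the direction asserted (it is, as both functors are covariant in $W$), and carrying the involution $\chi$ and the various $\Lambda'$-module structures consistently through the homological-algebra identities, in particular matching the $\Lambda'$-module structure transported onto $[X,\Sigma^{-1}Z]$ with the one implicit in the statement.
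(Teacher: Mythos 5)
Your argument is correct, but note how it sits relative to the paper: the paper does not prove this statement itself — it quotes it as Dwyer--Mitchell 9.1, with the preceding paragraph supplying the inputs (freeness of $E_{\infty}$ over $\Lambda'$, the vanishing $[Z,KU\phat]=0$, the canonical isomorphism~\eqref{eq:Edual}) and the remark that the representability is formal via the Yoneda lemma in Dwyer--Mitchell's framework. Your route implements that formality concretely: you reduce the composition pairing to the tautological case $W=KU\phat$ by naturality in $W$, using the non-canonical equivalence $\Sigma^{-1}Z\simeq KU\phat$ (which does hold, since $\Sigma^{-1}Z_{i}\simeq \Sigma^{2i-2}L$ and these shifts exhaust the even residues mod $2(p-1)$), and the two remaining isomorphisms in the statement are~\eqref{eq:Edual} together with biduality for the finitely generated free module $E_{\infty}$, with the $\chi$-twist handled exactly as in the paragraph following the theorem. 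Within the logic of this paper your argument is legitimate and complete, because the equivalences $Z_{i}\simeq\Sigma^{2i-1}L$ are recalled in Section~\ref{sec:review} as established prior results. The one caveat worth recording is that as a re-proof of the Dwyer--Mitchell theorem itself it would be circular: in their development the identification of the homotopy types of the $Z_{i}$ (equivalently, the existence of your chosen equivalence $h\colon KU\phat\to\Sigma^{-1}Z$) is deduced from the module-theoretic data — $(KU\phat)^{0}(Z)=0$ and $(KU\phat)^{1}(Z)$ projective — via precisely the kind of representability statement you are proving, arguing intrinsically from projectivity rather than from a chosen splitting. So your proof buys self-containedness relative to facts this paper already recalls, at the cost of inverting the logical order of the original source; either way the statement stands.
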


The last isomorphism follows from the fact that $E_{\infty}$ is free
of finite rank, using the isomorphism 
\[
\Hom_{\Lambda'}(E_{\infty},\Lambda')^{\chi}\iso 
\Hom_{\Lambda'}(E_{\infty}^{\chi},\Lambda')
\]
adjoint to the $\chi$-twisted evaluation map
\begin{equation}\label{eq:chiev}
\Hom_{\Lambda'}(E_{\infty},\Lambda')^{\chi}\otimes E_{\infty}^{\chi}
\to (\Lambda')^{\chi}\xrightarrow[\chi^{-1}]{\iso} \Lambda'.
\end{equation}
Plugging $X=KU\phat$ into Theorem~\ref{thm:DMZ}, we get an isomorphism of
$\Lambda'$-modules 
\[
[KU\phat,\Sigma^{-1}Z]\overto{\iso}
E_{\infty}^{\chi}\otimes_{\Lambda'}[KU\phat,KU\phat]\iso E_{\infty}^{\chi}.
\]
Concisely, this isomorphism and the isomorphism of~\eqref{eq:Edual}
identify the $\chi$-twisted evaluation map~\eqref{eq:chiev} 
with the composition in the stable category 
\[
[\Sigma^{-1}Z,KU\phat]\otimes [KU\phat,\Sigma^{-1}Z]\to
[KU\phat,KU\phat] \iso \Lambda'. 
\]
We have a canonical identification of endomorphism rings $\Lambda'\iso
[KU\phat,KU\phat]$ and 
$[\Sigma^{-1}Z,\Sigma^{-1}Z]$ induced by choosing any weak equivalence
$KU\phat \simeq \Sigma^{-1}Z$: because $\Lambda'$ is
commutative every choice induces the same isomorphism. Commutativity
also implies that the $\Lambda'$-module structure on
$[KU\phat,\Sigma^{-1}Z]$ from $[\Sigma^{-1}Z,\Sigma^{-1}Z]$ coincides with the
$\Lambda'$-structure from $[KU\phat,KU\phat]$ and we see that the
isomorphism in Theorem~\ref{thm:DMZ} is an isomorphism of
$\Lambda'$-modules for the $\Lambda'$-module structure on
$[X,\Sigma^{-1}Z]$ from  $[\Sigma^{-1}Z,\Sigma^{-1}Z]$.  Using the duality of the invertible
$\Lambda'$-modules $[\Sigma^{-1}Z,KU\phat]$ and
$[KU\phat,\Sigma^{-1}Z]$, Theorem~\ref{thm:DMZ} then implies the following
slightly less complicated isomorphism.

\begin{cor}
For any spectrum $X$, the natural map
\[
E_{\infty}^{\chi}\otimes_{\Lambda'}[X,KU\phat] \iso
[KU\phat,\Sigma^{-1}Z]\otimes_{[KU\phat,KU\phat]} [X,KU\phat]
\to [X,\Sigma^{-1}Z]
\]
is an isomorphism.
\end{cor}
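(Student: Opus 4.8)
The plan is to deduce the Corollary purely formally from Theorem~\ref{thm:DMZ}, by exhibiting the natural map in the statement as the inverse of the isomorphism in that theorem.

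Write $P=[KU\phat,\Sigma^{-1}Z]$ and $P^{\vee}=[\Sigma^{-1}Z,KU\phat]$, both regarded as modules over $\Lambda'\iso[KU\phat,KU\phat]$. First I would assemble what has already been recorded above the statement: by~\eqref{eq:Edual} there is an isomorphism $P^{\vee}\iso\Hom_{\Lambda'}(E_{\infty},\Lambda')^{\chi}$; plugging $X=KU\phat$ into Theorem~\ref{thm:DMZ} gives $P\iso E_{\infty}^{\chi}$; and since $E_{\infty}$ is free of rank $1$ over the commutative ring $\Lambda'$, both $P$ and $P^{\vee}$ are free of rank $1$. Moreover (this is the content of the remarks surrounding~\eqref{eq:chiev}), under these identifications the composition pairing
\[
\langle-,-\rangle\colon P^{\vee}\otimes_{\Lambda'}P\to[KU\phat,KU\phat]\iso\Lambda'
\]
is carried to the $\chi$-twisted evaluation map~\eqref{eq:chiev}, which realizes $P^{\vee}$ as the $\Lambda'$-linear dual $\Hom_{\Lambda'}(P,\Lambda')$ of $P$; in particular the composition pairing is a perfect pairing of invertible $\Lambda'$-modules. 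Here one uses the text's observation that commutativity of $\Lambda'$ makes the identifications $\Lambda'\iso[KU\phat,KU\phat]\iso[\Sigma^{-1}Z,\Sigma^{-1}Z]$ canonical, so that the $\Lambda'$-module structures in play agree.

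Next I would invoke the standard fact that for an invertible module $P$ with $\Lambda'$-linear dual $P^{\vee}$ there is, for every $\Lambda'$-module $N$, a natural isomorphism $P\otimes_{\Lambda'}N\iso\Hom_{\Lambda'}(P^{\vee},N)$ sending $\phi\otimes n$ to $\psi\mapsto\langle\psi,\phi\rangle\cdot n$. Taking $N=[X,KU\phat]$ (a $\Lambda'$-module by post-composition), and writing $\delta_{X}\colon[X,\Sigma^{-1}Z]\to\Hom_{\Lambda'}(P^{\vee},[X,KU\phat])$, $h\mapsto(\psi\mapsto\psi\circ h)$, for the first map occurring in Theorem~\ref{thm:DMZ} (which is an isomorphism, since the remaining identifications in that theorem are), I would check that $\delta_{X}$ precomposed with the map in the statement equals the above standard isomorphism. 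Indeed, chasing $\phi\otimes g$: the map in the statement sends it to $\phi\circ g$, and then $\delta_{X}$ sends that to $\psi\mapsto\psi\circ\phi\circ g=(\psi\circ\phi)\cdot g=\langle\psi,\phi\rangle\cdot g$, where the middle equality is precisely the statement that $[X,KU\phat]$ is a module over $[KU\phat,KU\phat]$ by post-composition and the last uses that $\langle-,-\rangle$ is the composition pairing. Hence the map in the statement is $\delta_{X}^{-1}$ followed by an isomorphism, so it is an isomorphism; naturality in $X$ is clear.

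The step I expect to require the most care is the identification of the composition pairing $\langle-,-\rangle$ with the perfect pairing $P^{\vee}\otimes_{\Lambda'}P\iso\Hom_{\Lambda'}(P,\Lambda')\otimes_{\Lambda'}P\to\Lambda'$ used to form the standard isomorphism, on the nose with $\langle\psi,\phi\rangle=\psi\circ\phi$; tracking the $\chi$ and $\chi^{-1}$ twists in~\eqref{eq:chiev} is the one place bookkeeping is needed. This compatibility, however, is exactly what was established in the paragraphs preceding the statement, so no genuinely new input is required; everything else is formal manipulation of invertible modules together with the Yoneda-type characterization of $\Sigma^{-1}Z$ from Theorem~\ref{thm:DMZ}.
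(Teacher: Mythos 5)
Your proposal is correct and follows essentially the same route as the paper, which deduces the corollary from Theorem~\ref{thm:DMZ} by using the duality of the invertible $\Lambda'$-modules $[\Sigma^{-1}Z,KU\phat]$ and $[KU\phat,\Sigma^{-1}Z]$ (i.e., the identification of the composition pairing with the $\chi$-twisted evaluation established in the preceding paragraphs). You merely spell out the formal invertible-module manipulation that the paper leaves implicit, and that bookkeeping is carried out correctly.
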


We can identify the spectra $Z_{i}$ using the eigensplitting of
$E_{\infty}^{\chi}$.  To explain this, let $\Lambda$ be the completed group ring
\[
\Lambda =\bZp[[U^{1}]]=\lim \bZp[U^{1}/U^{n}]
\]
where $U^{n}$ denotes the subgroup of $\bZpt$ of elements congruent to
$1$ mod $p^{n}$.  Then $\Lambda$ is a (topological) subring of
$\Lambda'$ and because $\bZpt\iso\mu_{p-1}\times U^{1}$, 
$\Lambda'$ is isomorphic to the group algebra $\Lambda [\mu_{p-1}]$.
We prefer to write $\Lambda'\iso \Lambda [\bF_{p}^{\times}]$, using
the Teichm\"uller character $\omega \colon \bF_{p}^{\times}\to \bZpt$
for the isomorphism $\bF_{p}^{\times}\to \mu_{p-1}$.  Because $p-1$ is
invertible in $\bZp$ and $\mu_{p-1}\subset \bZp$, we get orthogonal
idempotent elements 
\[
\epsilon_{i}=\frac1{p-1}\sum_{\alpha \in \bF_{p}^{\times}}\omega(\alpha)^{-i} \psi^{\omega(\alpha)}
\]
in $\Lambda'$ for $i=0,\dotsc,p-2$ (where we have written the group
elements (in $\bZpt$) using Adams operation notation to distinguish
them from the coefficient elements (in $\bZp$) of the completed group
algebra $\Lambda'$).  These give a Cartesian product decomposition of 
$\Lambda'$, 
\[
\Lambda'=\epsilon_{0}\Lambda'\times \dotsb \times \epsilon_{p-2}\Lambda'
\]
where the elements of $\epsilon_{i}\Lambda$ can be characterized as
the elements of $\Lambda'$ on which $\bF_{p}^{\times}$ acts by
multiplication by $\omega^{i}$; we call this the $\omega^{i}$
eigenspace of $\Lambda'$.  Since $\omega^{i}=\omega^{i+p-1}$, it makes
sense and can be convenient to index the eigenspaces on elements of
$\bZ/(p-1)$ rather than the specific representatives $0,\dotsc,p-2$.  
The inclusion $\Lambda \to \Lambda'$
induces an isomorphism of (topological) $\bZp$-algebras between
$\Lambda$ and $\epsilon_{i}\Lambda'$ for each $i$.  Every
$\Lambda'$-module admits a corresponding decomposition into
$\omega^{i}$ eigenspaces, which we can regard as $\Lambda$-modules.

The Cartesian product decomposition of $\Lambda'$ above corresponds
exactly to the Cartesian product decomposition of $[KU\phat,KU\phat]$, 
\[
[KU\phat,KU\phat]\iso [L,L]\times [\Sigma^{2}L,\Sigma^{2}L]\times \dotsb \times [\Sigma^{2p-4}L,\Sigma^{2p-4}L]
\]
induced by the Adams splitting $KU\phat \simeq L\vee
\Sigma^{2}L\vee\dotsb\vee\Sigma^{2p-4}L$: The decomposition
isomorphism takes $[\Sigma^{2i}L,\Sigma^{2i}L]$ to precisely the
subset of $[KU\phat,KU\phat]$ of elements on which
$\psi^{\omega(\alpha)}$ acts by multiplication by $\omega(\alpha)^{i}$
for all $\alpha\in \bF_{p}^{\times}$.  The isomorphism $\Lambda'\iso
[KU\phat,KU\phat]$ induces an isomorphism $\Lambda \iso [L,L]$ for the
inclusion of $[L,L]$ as the diagonal in $\prod [L,L]\iso \prod
[\Sigma^{2i}L,\Sigma^{2i}L]$.  Since the identification of
$[KU\phat,\Sigma^{-1}Z]$ as $E_{\infty}^{\chi}$ above is an isomorphism of
$\Lambda'$-modules, we get an isomorphism of $\Lambda$-modules
\[
\epsilon_{i}[KU\phat,\Sigma^{-1}Z]\iso \epsilon_{i}E_{\infty}^{\chi}
\]
on the $\omega^{i}$ eigenspaces for all $i$.  The idempotent
$\epsilon_{i}$ of $[KU\phat,KU\phat]$ is the composite of the
projection and inclusion
\[
KU\phat \to \Sigma^{2i}L\to KU\phat,
\]
so we see that 
\[
\epsilon_{i}[KU\phat,\Sigma^{-1}Z]\iso 
[\Sigma^{2i}L,\Sigma^{-1}Z]\iso
[\Sigma^{2i}L,\Sigma^{-1}Z_{i+1}]
\]
(numbering mod ${p-1}$) as $[\Sigma^{2i}L,\Sigma^{-1}Z_{j}]=0$ unless
$j\equiv i+1\pmod{p-1}$.  We then get the following characterization
of $Z_{i}$.

\begin{cor}\label{cor:DMZto}
Let $i\in \bZ/(p-1)$.  For any spectrum $X$, the natural map
\[
\epsilon_{i} E_{\infty}^{\chi}\otimes_{\Lambda}[X,\Sigma^{2i}L] \iso
[\Sigma^{2i}L,\Sigma^{-1}Z_{i+1}]\otimes_{[\Sigma^{2i}L,\Sigma^{2i}L]} [X,L]
\to [X,\Sigma^{-1}Z_{i+1}]
\]
is an isomorphism.
\end{cor}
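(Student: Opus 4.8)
The plan is to obtain Corollary~\ref{cor:DMZto} from the preceding corollary (the ``slightly less complicated'' one, stated for $KU\phat$) simply by splitting it with the idempotent $\epsilon_{i}$. Recall that the preceding corollary furnishes, for every spectrum $X$, a natural isomorphism
\[
E_{\infty}^{\chi}\otimes_{\Lambda'}[X,KU\phat]\overto{\iso}[X,\Sigma^{-1}Z],
\]
and that, by the discussion above, this is an isomorphism of $\Lambda'$-modules when $[X,\Sigma^{-1}Z]$ carries its $\Lambda'$-structure from $[\Sigma^{-1}Z,\Sigma^{-1}Z]$, which coincides with the structure pulled back from $[KU\phat,KU\phat]$ along any equivalence $KU\phat\htp\Sigma^{-1}Z$. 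Since $\epsilon_{i}$ is a central idempotent of $\Lambda'$ and the displayed map is $\Lambda'$-linear and natural in $X$, applying the (exact) endofunctor $M\mapsto\epsilon_{i}M$ on $\Lambda'$-modules — equivalently, passing to $\omega^{i}$ eigenspaces — produces a natural isomorphism of $\epsilon_{i}\Lambda'\iso\Lambda$-modules.

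Next I would identify the three eigenspace pieces, each of which has already been set up in the paragraphs preceding the corollary. On the source, commutativity of $\Lambda'$ gives $\epsilon_{i}\bigl(E_{\infty}^{\chi}\otimes_{\Lambda'}[X,KU\phat]\bigr)\iso\epsilon_{i}E_{\infty}^{\chi}\otimes_{\epsilon_{i}\Lambda'}\epsilon_{i}[X,KU\phat]\iso\epsilon_{i}E_{\infty}^{\chi}\otimes_{\Lambda}[X,\Sigma^{2i}L]$, using $\epsilon_{i}\Lambda'\iso\Lambda$ via $\Lambda\hookrightarrow\Lambda'$ and $\epsilon_{i}[X,KU\phat]\iso[X,\Sigma^{2i}L]$, the latter because $\epsilon_{i}$, regarded in $[KU\phat,KU\phat]$, is the composite $KU\phat\to\Sigma^{2i}L\to KU\phat$ of the Adams-summand projection and inclusion. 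On the target, the same description of $\epsilon_{i}$ inside $[\Sigma^{-1}Z,\Sigma^{-1}Z]$ as projection-inclusion onto a single wedge summand, combined with the vanishing $[\Sigma^{2i}L,\Sigma^{-1}Z_{j}]=0$ for $j\not\equiv i+1\pmod{p-1}$, shows $\epsilon_{i}[X,\Sigma^{-1}Z]\iso[X,\Sigma^{-1}Z_{i+1}]$ (indexing mod $p-1$). Feeding in the identifications $\epsilon_{i}E_{\infty}^{\chi}\iso[\Sigma^{2i}L,\Sigma^{-1}Z_{i+1}]$ and $\Lambda\iso[\Sigma^{2i}L,\Sigma^{2i}L]$ already recorded above, the map one obtains is exactly the composition map of the corollary, and it is an isomorphism; naturality in $X$ is inherited from each step.

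I do not expect a genuine obstacle here: the corollary is a formal $\epsilon_{i}$-splitting of the previous one, and all the required compatibilities of $\Lambda'$-module structures and of $\epsilon_{i}$ with the wedge decompositions are already in hand. The one point that deserves care, rather than pure formality, is the index shift $i\mapsto i+1$ — that is, confirming that under the non-canonical equivalence $KU\phat\htp\Sigma^{-1}Z$ used to transport endomorphism rings, the idempotent $\epsilon_{i}$ lands on the summand $\Sigma^{-1}Z_{i+1}$ and on no other $Z_{j}$. This is precisely where one invokes the concentration of the $L$-cohomology of $Z_{j}$ in degrees $\equiv 2j-1\pmod{2(p-1)}$, hence the vanishing $[\Sigma^{2i}L,\Sigma^{-1}Z_{j}]=0$ unless $j\equiv i+1$; everything else is bookkeeping with the commutative ring $\Lambda'$ and its idempotent decomposition.
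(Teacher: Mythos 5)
Your proposal is correct and is essentially the paper's own derivation: the corollary is obtained by applying the idempotent $\epsilon_{i}$ to the preceding isomorphism $E_{\infty}^{\chi}\otimes_{\Lambda'}[X,KU\phat]\to[X,\Sigma^{-1}Z]$ and identifying the eigenspace pieces via $\epsilon_{i}\Lambda'\iso\Lambda$, $\epsilon_{i}[X,KU\phat]\iso[X,\Sigma^{2i}L]$, and the vanishing $[\Sigma^{2i}L,\Sigma^{-1}Z_{j}]=0$ unless $j\equiv i+1\pmod{p-1}$. The one point you single out as needing care (pinning the idempotent to the summand $\Sigma^{-1}Z_{i+1}$ via the concentration of $L$-cohomology) is exactly the step the paper invokes.
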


More can be said about choosing weak equivalences $Z_{i}\simeq
\Sigma^{2i-1}L$.  From the discussion above, choosing such a weak
equivalence is equivalent to choosing a
generator of the free rank one $\Lambda$-module
$\epsilon_{i-1}E_{\infty}^{\chi}$, and this is equivalent to choosing a
generator of the free rank one $\Lambda$-module $\epsilon_{p-i}E_{\infty}$. 

We treat the case of $\epsilon_{0}E_{\infty}$ separately, but all
cases require a choice of a system of primitive $p$th roots of units
$\zeta_{n}\in \mu_{p^{n+1}}$ with $\zeta_{n}^{p}=\zeta_{n-1}$, which
we now fix. (For example, choosing an embedding of $\bQp$ in $\bC$,
one could take $\zeta_{n}=e^{2\pi i/p^{n+1}}$.)  Then $\zeta_{n}-1$ is
a uniformizer for $\NR_{F_{n}}$ for each $n$: we have
$N_{F_{n+1}/F_{n}}(\zeta_{n+1}-1)=\zeta_{n}-1$ and
$N_{F_{0}/\bQ_{p}}(\zeta_{0}-1)=p$. This argument also shows that the
system $(\!(\zeta_{n}-1)\!)$ specifies an element of $E_{\infty}$,
and we can consider its $\omega^{0}$
eigenfactor $\epsilon_{0}(\!(\zeta_{n}-1)\!)$.

\begin{prop}\label{prop:zerogen}
In the $\Lambda'$-module $E_{\infty}$, $\epsilon_{0}(\!(\zeta_{n}-1)\!)$ generates $\epsilon_{0}E_{\infty}$
\end{prop}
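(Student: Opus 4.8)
The plan is to reduce the statement about the $\omega^0$-eigenspace of the full Iwasawa module $E_\infty$ to a classical fact in Iwasawa theory about units in the cyclotomic $\mathbb Z_p$-extension of $\mathbb Q_p$. First I would pass from $\Lambda'=\bZp[[\bZpt]]$ to $\Lambda=\bZp[[U^1]]$ via the idempotent $\epsilon_0$: since $\epsilon_0\Lambda'\iso\Lambda$ and $\epsilon_0 E_\infty$ is a $\Lambda$-module known (from Dwyer–Mitchell, as recalled in Section~\ref{sec:Zi}) to be free of rank $1$, it suffices to show that $\epsilon_0(\!(\zeta_n-1)\!)$ is \emph{not} in $\mathfrak m\cdot\epsilon_0 E_\infty$, where $\mathfrak m$ is the maximal ideal of $\Lambda$; equivalently, after reducing mod $\mathfrak m$, that the image is nonzero. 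Concretely, reduction mod the augmentation ideal of $\Lambda$ corresponds to taking $U^1$-coinvariants, i.e.\ pushing $E_\infty$ down to $E_0 = (\bQp^\times/\text{torsion})\phat$ along the norm-compatible tower and looking at the $\omega^0$-component there.

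The key computational input is the behavior of the norm maps on the chosen uniformizers. As the paper already notes just before the statement, $N_{F_{n+1}/F_n}(\zeta_{n+1}-1)=\zeta_n-1$ and $N_{F_0/\bQp}(\zeta_0-1)=p$. So the system $(\!(\zeta_n-1)\!)\in E_\infty$ maps, under the projection $E_\infty\to E_0$, to the class of $p$ in $(\bQp^\times/\text{torsion})\phat$. Now $\bQp^\times/\text{torsion}\iso U^1\times p^{\bZ}$, and $p$-completing gives $(\bQp^\times/\text{torsion})\phat\iso U^1_{\,\phat}\times\bZp$ with $p$ generating the $\bZp$-factor; crucially the $\bF_p^\times=\Gal(F_0/\bQp)$-action is trivial on this quotient (the Teichmüller torsion has been killed), so $\epsilon_0$ acts as the identity and the image of $\epsilon_0(\!(\zeta_n-1)\!)$ in $\epsilon_0 E_0=E_0$ is exactly the class of $p$, which is nonzero and in fact a generator of the free rank-one $\bZp$-summand it lives in. Since $\epsilon_0 E_\infty$ is $\Lambda$-free of rank $1$ and its reduction mod $\mathfrak m$ is $\epsilon_0 E_0$ (the $U^1$-coinvariants, which is where Iwasawa's control theorem / the explicit structure of local units enters), Nakayama's lemma then forces $\epsilon_0(\!(\zeta_n-1)\!)$ to be a $\Lambda$-module generator of $\epsilon_0 E_\infty$.

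The step I expect to be the main obstacle is pinning down precisely that the $U^1$-coinvariants of $\epsilon_0 E_\infty$ is canonically $\epsilon_0 E_0$ (rather than a proper quotient or something with extra torsion), i.e.\ a control-type statement for this particular Iwasawa module of semi-local units. One must check that the norm maps $E_{n}\to E_{n-1}$ are surjective after the relevant completions in the $\omega^0$-component and that no $\mu$- or $\lambda$-invariant pathology obstructs the identification; this is where one invokes the structure theory of $E_\infty$ as the local analogue of the global cyclotomic units, together with the fact (used repeatedly in Section~\ref{sec:Zi}) that $E_\infty$ is $\Lambda'$-free of rank $1$. Once freeness is in hand the Nakayama argument is formal, so the real content is matching the explicit norm-compatible element $(\!(\zeta_n-1)\!)$ against the $\Lambda$-module generator, and the computation $N_{F_0/\bQp}(\zeta_0-1)=p$ is exactly what makes the reduction nonzero. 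I would close by remarking that this also explains the normalization: the $\omega^0$-factor of $E_\infty$ is, up to the chosen generator, identified with $\Lambda$ itself, which is consistent with Corollary~\ref{cor:DMZto} for $i=p-1$ (i.e.\ $Z_{p-1}$, equivalently $Z_0$ under the mod-$(p-1)$ indexing) and with $X_1\simeq *$, $Z_0\to\Sigma X_0$ being an equivalence.
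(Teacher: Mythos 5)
Your overall strategy (Nakayama's lemma applied to the free rank-one $\Lambda$-module $\epsilon_{0}E_{\infty}$, with the norm computation $N_{F_{0}/\bQp}(\zeta_{0}-1)=p$ as the nondegeneracy input) is the same as the paper's, but the step you yourself flag as the main obstacle is a genuine gap, and moreover the identification you propose there is false as stated. You assert that the $U^{1}$-coinvariants of $\epsilon_{0}E_{\infty}$ should be canonically the bottom-level module $(\bQp^{\times}/\text{torsion})\phat$ (note also an index slip: in the paper's notation this is $E_{-1}$, not $E_{0}$, since $F_{0}=\bQp(\mu_{p})$). This cannot hold: since $E_{\infty}$ is $\Lambda'$-free of rank one, $\epsilon_{0}E_{\infty}\iso\Lambda$ and its coinvariants modulo the augmentation ideal have $\bZp$-rank $1$, whereas $(\bQp^{\times}/\text{torsion})\phat\iso \bZp\times U^{1}$ has rank $2$. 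Concretely, local class field theory gives that the norm group of $F_{n}/\bQp$ is $\langle p\rangle\cdot U^{n+1}$ up to torsion, so the image of the push-down map $E_{\infty}\to(\bQp^{\times}/\text{torsion})\phat$ is only the closed $\bZp$-submodule generated by the class of $p$; no control-theorem statement of the kind you invoke is available, and your proof as written does not close.

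The good news is that the step is unnecessary, and your own computation already contains the fix. For Nakayama you only need \emph{some} $\Lambda'$-linear map $\phi$ from $E_{\infty}$ to a module with trivial $U^{1}$-action under which $\epsilon_{0}(\!(\zeta_{n}-1)\!)$ is visibly not in the image of $\mathfrak m\,\epsilon_{0}E_{\infty}$: such a $\phi$ kills $T=\gamma-1$, so $\phi(\mathfrak m\,\epsilon_{0}E_{\infty})\subseteq p\cdot\phi(\epsilon_{0}E_{\infty})$, and it suffices that $\phi$ of your element is not divisible by $p$. Your push-down map works for this (the class of $p$ is not a $p$-th power in $(\bQp^{\times}/\text{torsion})\phat$, by valuation), without any claim that it induces an isomorphism on coinvariants. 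The paper streamlines exactly this: the valuation $F_{n}^{\times}\to\bZ$ kills roots of unity, commutes with the norms in the totally ramified tower, and assembles to a $\Lambda'$-linear homomorphism $E_{\infty}\to\bZp$ (trivial action) sending $(\!(\zeta_{n}-1)\!)$, hence $\epsilon_{0}(\!(\zeta_{n}-1)\!)$, to $1$; together with freeness of $\epsilon_{0}E_{\infty}$ and locality of $\Lambda$, Nakayama finishes. If you replace your coinvariants claim by this weaker detection statement, your argument becomes correct and essentially coincides with the paper's proof.
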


\begin{proof}
The valuation $F_{n}^{\times}\to \bZ$ is a homomorphism that sends
roots of unity to zero and so extends to a homomorphism $E_{n}\to
\bZp$.  It commutes with the norm and so assembles to a homomorphism
$E_{\infty}\to \bZp$.  Giving $\bZp$ the trivial $\Lambda'$-action,
the homomorphism $E_{\infty}\to \bZp$
is $\Lambda'$-linear and factors through $\epsilon_{0}E_{\infty}$.
Because $\zeta_{n}-1\in F_{n}^{\times}$ has valuation $1$, $\Lambda$ is a local ring,
and $\epsilon_{0}E_{\infty}$ is a free rank one $\Lambda$-module, it
follows by Nakayama's Lemma
that $\epsilon_{0}(\!(\zeta_{n}-1)\!)$ is a generator of $\epsilon_{0}E_{\infty}$.
\end{proof}

\begin{rem}\label{rem:Y1}
We note that the element $\epsilon_{0}(\!(\zeta_{n}-1)\!)$ of
$\epsilon_{0}E_{\infty}$ is in the image of the corresponding
$\Lambda'$-module
defined in terms of $\bZ[1/p]$ in place of $\bQp$, that is, the
inverse limit over norm maps of the $p$-completion of the units of
$\bZ[\zeta_{n},1/p]$ modulo torsion. This element can be
used to construct a weak equivalence $\Sigma^{-1}Y_{1}\simeq L$ by an argument
analogous to the one for Theorem~\ref{thm:DMZ} (but for the
$\epsilon_{0}$ piece only).  Since
the resulting weak equivalence is just the composite of the weak
equivalence $\Sigma^{-1}Y_{1}\to \Sigma^{-1}Z_{1}$ induced by the
cyclotomic trace and the weak equivalence $\Sigma^{-1}Z_{1}\simeq L$
coming from the previous proposition (and Corollary~\ref{cor:DMZto}),
we omit the details.
\end{rem}

For other eigenspaces, $\epsilon_{i}(\!(\zeta_{n}-1)\!)$ is generally not a
generator.  If $i\not\equiv 0$ mod $p-1$,
then $\epsilon_{i}(\zeta_{n}-1)\in F_{n}^{\times}$ is a
\term{cyclotomic unit}~\cite[3\S5]{Lang-CyclotomicIandII}. In
particular, it is a real multiple of a root of
unity~\cite[p.~84]{Lang-CyclotomicIandII}, and as such,
$\epsilon_{i}(\zeta_{n}-1)$ becomes the 
identity element in $\epsilon_{i}E_{n}$ for $i$ odd.  For $i$
even, $\epsilon_{i}(\!(\zeta_{n}-1)\!)$ is a generator of
$\epsilon_{i}E_{\infty}$ if and only if the
Bernoulli number $B_{i}$ is relatively prime to $p$ (see the argument
for Theorem~1.4 in Chapter~7 of Lang~\cite{Lang-CyclotomicIandII}).

The authors know of two (for $i=1$) or three (for $i=2,\dotsc,p-2$)
distinct ways of producing generators for the other eigenspaces.  For
the constructions, let 
$U^{1}_{n}$ denote the subgroup of $F^{\times}_{n}$ congruent to $1$ mod
$\zeta_{n}-1$ and let $U^{1}_{\infty}=\lim U^{1}_{n}$ under the norm
maps. Since $U^{1}_{n}$ is $p$-complete, the Galois action on $U^{1}_{\infty}$
makes it a $\Lambda'$-module.  Let $T_{p}(\mu)=\lim \mu_{p^{n+1}}$,
a $\Lambda'$-submodule of $U^{1}_{\infty}$.  Since $\mu_{p^{n+1}}$ is the
torsion of $U^{1}_{n}$ and $\mu_{(p-1)p^{n+1}}$ is the torsion in
$F^{\times}$, we have an exact sequence of $\Lambda'$-modules 
\[
0\to T_{p}(\mu)\to U^{1}_{\infty}\to E_{\infty}\to \bZp\to 0
\]
where $\bZp$ has the trivial Galois action and the map $E_{\infty}\to
\bZp$ is induced by the valuation as in the proof of
Proposition~\ref{prop:zerogen} above.  In particular
$U^{1}_{\infty}\to E_{\infty}$ is an isomorphism on $\omega^{i}$
eigenspaces for $i\neq 0,1$ and an epimorphism on $\omega^{1}$. 

Given an element $(\!(u_{n})\!)\in U^{1}_{\infty}$, we can detect whether
$\epsilon_{i}(\!(u_{n})\!)$ maps to a generator of $\epsilon_{i}E_{\infty}$
using the \term{Kummer homomorphisms} (see
\cite[7\S1--2]{Lang-CyclotomicIandII}) $\phi_{i}\colon U^{1}_{0}\to
\bF_{p}$. Let $D\colon \bZp[[X]]\to \bZp[[X]]$ be the homomorphism of
$\bZp$-modules $Df=(1+X)f'(X)$.  Given $u\in U_{0}^{1}$,
$u=f_{u}(\zeta_{0}-1)$ for some (non-unique) $f_{u}\in \bZp[[X]]$ with
leading coefficient congruent to $1$ mod $p$; for $i=1,\ldots,p-2$,
define  
\[
\phi_{i}(u)=D^{i}(\log(f_{u}))|_{X=0} \pmod p \in \bF_{p}
\]
where $\log(f)=(f-1)-(f-1)^{2}/2+\cdots$ (or more generally, for any
power series $f$ with constant term a unit in $\bZp$, we can equivalently
interpret $D^{i}\log(f)$ as $D^{i-1}((1+X)f'/f)$).
The power series $f_{u}$ is well defined mod $(p,X^{p-1})$ since
$\NR_{F_{0}}$ has ramification index $p-1$ over $\bZp$.  As a consequence,
$f_{u}(X)f_{v}(X)\equiv f_{uv}(X)\mod{(p,X^{p-1})}$.  The formal
power series identity for $\log(1+X)$ (or the product rule applied to
$(gh)'/(gh)$) implies 
\[
D^{i}\log(f_{u}f_{v})=D^{i}\log(f_{u})+D^{i}\log(f_{v})
\]
and $\phi_{i}$ is a well-defined homomorphism.  The (easily checked)
formula
\[
D(f((X+1)^{a}-1))=a(X+1)^{a}f'((X+1)^{a}-1)=aDf|_{(X+1)^{a}-1}
\]
shows that
$\phi_{i}(\psi^{a}u)=a^{i}\phi_{i}(u)$ for any $u\in U_{0}^{1}$, $a\in
\bZpt$.  It follows that $\phi_{i}(\epsilon_{i}u)=\phi_{i}(u)$. Since
$\epsilon_{i}E_{\infty}$ is a free rank one $\Lambda$-module and $\Lambda$ is a
local ring, if $\phi_{i}(u_{0})\neq 0$ for some
$(\!(u_{n})\!)\in U_{\infty}^{1}$, and either $i\in \{2,\ldots,p-2\}$ or $i=1$ and
$\epsilon_{1}u_{0}\notin \mu_{p}$, then the image of $\epsilon_{i}(\!(u_{n})\!)$ generates
$\epsilon_{i}E_{\infty}$.  For the first construction of generators,
we use the following proposition
from~\cite[7\S3]{Lang-CyclotomicIandII}.

\begin{prop}\label{prop:LangUnits}
Fix $i\in 1,\ldots,p-2$.  There exists $\lambda \in \mu_{p-1}, \lambda
\neq 1$ such that $\phi_{i}(\omega(\lambda -1)^{-1}(\lambda -\zeta_{0}))\neq 0$.
\end{prop}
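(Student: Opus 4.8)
The plan is to compute the Kummer homomorphism $\phi_i$ explicitly on the proposed one--parameter family $\{\omega(\lambda-1)^{-1}(\lambda-\zeta_0)\}_{\lambda}$, observe that the answer is (the value of) a polynomial in one variable over $\bF_p$ of controlled degree, and then deduce the existence statement by a trivial root count.

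First I would record the setup. For $\lambda\in\mu_{p-1}$ with $\lambda\neq 1$, write $\bar\lambda\in\bF_p^\times\setminus\{1\}$ for the reduction, so that $\omega(\lambda-1)$ is the Teichm\"uller representative of $\bar\lambda-1$ and in particular $\omega(\lambda-1)\equiv\lambda-1\pmod p$ is a unit of $\bZp$. Since $\lambda-\zeta_0=(\lambda-1)-(\zeta_0-1)\equiv\bar\lambda-1\pmod{\zeta_0-1}$, the element $u_\lambda:=\omega(\lambda-1)^{-1}(\lambda-\zeta_0)$ is $\equiv 1$ modulo the maximal ideal of $\NR_{F_0}$, hence lies in $U^1_0$; and it is represented by the degree--one power series $f_{u_\lambda}(X)=\omega(\lambda-1)^{-1}\bigl((\lambda-1)-X\bigr)$, whose constant term $\omega(\lambda-1)^{-1}(\lambda-1)$ is $\equiv 1\pmod p$, as required in the definition of $\phi_i$.

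Next I would carry out the computation. Put $\mu=(\lambda-1)^{-1}\in\bZp^\times$. Then $f_{u_\lambda}'/f_{u_\lambda}=-1/\bigl((\lambda-1)-X\bigr)=-\mu/(1-\mu X)\in\bZp[[X]]$, so the reformulation $D^i\log f=D^{i-1}\bigl((1+X)f'/f\bigr)$ recalled above gives
\[
\phi_i(u_\lambda)\equiv D^{i-1}\!\bigl(-\mu(1+X)/(1-\mu X)\bigr)\big|_{X=0}\pmod p .
\]
Expanding $-\mu(1+X)/(1-\mu X)=-\mu-\sum_{n\ge 1}(\mu^n+\mu^{n+1})X^n$ in $\bZp[[X]]$ and using the elementary facts that $D^{i-1}(X^n)|_{X=0}=0$ for $n\ge i$ while $D^{i-1}(X^{i-1})|_{X=0}=(i-1)!$ (both immediate once one notes that the substitution $X=e^t-1$ turns $D$ into $d/dt$), one sees that $\phi_i(u_\lambda)$ is the value at $\mu$ of a polynomial $P\in\bF_p[\mu]$ with $P(0)=0$, $\deg P=i$, and leading term $-(i-1)!\,\mu^i$; here $i\le p-2$ is exactly what makes the leading coefficient a unit. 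Finally, as $\lambda$ ranges over $\mu_{p-1}\setminus\{1\}$, the quantity $\mu=(\bar\lambda-1)^{-1}$ ranges over all of $\bF_p^\times\setminus\{-1\}$.

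The existence claim is then immediate: if $\phi_i(u_\lambda)$ were $0$ for every $\lambda\in\mu_{p-1}\setminus\{1\}$, then $P$ would vanish at the $p-2$ points of $\bF_p^\times\setminus\{-1\}$ and also at $\mu=0$, hence have at least $p-1$ roots in $\bF_p$, contradicting $1\le\deg P=i\le p-2$ with $P\neq 0$. So some $\lambda$ works. I expect the only real work to be the middle step---unwinding the definitions of $\phi_i$ and $D$ on the explicit element $u_\lambda$, and the bookkeeping that $f_{u_\lambda}$ is a legitimate representative of $u_\lambda$ modulo $(p,X^{p-1})$ (which is again where $i\le p-2$ is used)---but this is structural rather than delicate, since all that the argument needs about $P$ is that it has zero constant term and a unit leading coefficient. (One can also check, though it is not needed for the statement, that $N_{F_{n+1}/F_n}(\lambda-\zeta_{n+1})=\lambda^p-\zeta_n=\lambda-\zeta_n$ because $\lambda^{p}=\lambda$, so $(\!(\omega(\lambda-1)^{-1}(\lambda-\zeta_n))\!)$ is norm--coherent; this is why this family is the natural one to feed into the generation criterion preceding the proposition.)
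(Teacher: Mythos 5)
Your proof is correct and is essentially the paper's argument: both compute $\phi_i$ on the explicit family $\omega(\lambda-1)^{-1}(\lambda-\zeta_0)$ via $D^{i}\log f_{u}$ and conclude by counting roots of a nonzero polynomial of degree at most $p-2$ against the $p-2$ admissible values of $\lambda$. Your substitution $\mu=(\lambda-1)^{-1}$ (giving a degree-$i$ polynomial with unit leading coefficient $-(i-1)!$ and vanishing constant term) is just a reparametrization of the paper's rational function of $\lambda$ with monic numerator of degree $i-1$, so the approaches coincide.
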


The proof is an analysis of $\phi_{i}(u_{0}(\lambda))$ for
$u_{0}(\lambda)=\omega(\lambda -1)^{-1}(\lambda -\zeta_{0})$.
Clearly, we can take 
$f_{u_{0}(\lambda)}=\omega(\lambda -1)^{-1}(\lambda -1-X)$.
We then have $D(\log(f_{u_{0}}))|_{X=0}=1/(1-\lambda)$, and in general, 
a little bit of algebra (q.v.~\cite[p.~182]{Lang-CyclotomicIandII})
shows that $D^{i}(\log f_{u_{0}(\lambda)})|_{X=0}$ is a
rational polynomial in $\lambda$ whose numerator is a monic polynomial
of degree $i-1<p-2$; thus, for at least one of the $p-2$ values of
$\lambda\in \mu_{p-1}\setminus\{1\}$, $\phi_{i}(u_{0}(\lambda))$ must
be non-zero mod $p$.  

For the $\omega^{1}$ eigenspace, we also need the following observation.

\begin{prop}
Let $\lambda \in\mu_{p-1}\setminus\{1\}$ and let $u_{0}(\lambda)=\omega(\lambda
-1)^{-1}(\lambda -\zeta_{0})$.  Then $\epsilon_{1}(u_{0}(\lambda))$ is not a pth
root of unity. 
\end{prop}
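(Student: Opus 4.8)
The plan is to reduce the claim to a nonvanishing in the Iwasawa module $E_{\infty}$ and then to extract that from the Coleman power series of the norm‑coherent refinement of $u_{0}(\lambda)$. First I would make the reduction. The torsion subgroup of $U^{1}_{0}$ is $\mu_{p}$ (since $\mu_{p^{2}}\not\subset F_{0}$), and $\bF_{p}^{\times}$ acts on $\mu_{p}$ through $\omega$, so $\epsilon_{1}\mu_{p}=\mu_{p}$ and the kernel of $\epsilon_{1}U^{1}_{0}\to\epsilon_{1}E_{0}$ is exactly $\mu_{p}$; hence $\epsilon_{1}u_{0}(\lambda)$ is a $p$th root of unity if and only if its image in $\epsilon_{1}E_{0}$ vanishes, and it suffices to show that image is nonzero. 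For bookkeeping I would record that, via the Galois‑equivariant $p$‑adic logarithm and the identification $F_{0}\iso\bQp[\Gal(F_{0}/\bQp)]$ on the permutation basis $\{\zeta_{0}^{a}\}_{a\in\bF_{p}^{\times}}$, every $\omega^{i}$‑eigenspace of $U^{1}_{0}\otimes\bQp$ is one‑dimensional, so $\epsilon_{1}U^{1}_{0}\iso\bZp\oplus\mu_{p}$ and $\epsilon_{1}E_{0}\iso\epsilon_{1}U^{1}_{0}/\mu_{p}\iso\bZp$.

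Next I would pass to the $p$‑cyclotomic tower. Since $\lambda\in\mu_{p-1}$ satisfies $\lambda^{p}=\lambda$, the identity $\prod_{\zeta\in\mu_{p}}(X-\zeta\zeta_{n})=X^{p}-\zeta_{n-1}$ (a consequence of $\zeta_{n}^{p}=\zeta_{n-1}$) gives $N_{F_{n}/F_{n-1}}(\lambda-\zeta_{n})=\lambda^{p}-\zeta_{n-1}=\lambda-\zeta_{n-1}$, so (together with $N_{F_{n}/F_{n-1}}(c)=c^{p}$ for $c\in F_{n-1}^{\times}$ and $\omega(\lambda-1)^{p}=\omega(\lambda-1)$) the elements $u_{n}(\lambda)=\omega(\lambda-1)^{-1}(\lambda-\zeta_{n})\in U^{1}_{n}$ form a norm‑coherent system $(\!(u_{n}(\lambda))\!)\in U^{1}_{\infty}$ restricting to $u_{0}(\lambda)$ at level $0$. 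A direct check (again reducing to $\lambda^{p}=\lambda$ for the coherence relation $\prod_{\zeta\in\mu_{p}}g((1+T)\zeta-1)=g((1+T)^{p}-1)$) identifies its Coleman power series as $g_{\lambda}(T)=\omega(\lambda-1)^{-1}\bigl((\lambda-1)-T\bigr)$, which is a unit of $\bZp[[T]]$ since its constant term $\omega(\lambda-1)^{-1}(\lambda-1)$ lies in $1+p\bZp$.

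Then I would run the descent. Under the Dwyer--Mitchell/Coleman description of $E_{\infty}$ as a free rank‑one $\Lambda'$‑module (the input behind Theorem~\ref{thm:DMZ}), the class of a norm‑coherent unit in $\epsilon_{1}E_{\infty}=\epsilon_{1}U^{1}_{\infty}/T_{p}(\mu)$ is computed from its Coleman power series --- the kernel of the relevant logarithmic‑derivative map being precisely $T_{p}(\mu)$ --- and, $g_{\lambda}$ being a unit power series, one gets that $\epsilon_{1}(\!(u_{n}(\lambda))\!)$ maps to a generator of the free rank‑one $\Lambda$‑module $\epsilon_{1}E_{\infty}$. Using the exact sequence $0\to T_{p}(\mu)\to U^{1}_{\infty}\to E_{\infty}\to\bZp\to0$ recalled above, $\epsilon_{1}U^{1}_{\infty}$ is then generated over $\Lambda$ by $\epsilon_{1}(\!(u_{n}(\lambda))\!)$ and a generator $(\!(\zeta_{n})\!)$ of $\epsilon_{1}T_{p}(\mu)=T_{p}(\mu)$. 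Finally I would descend to level $0$: the projection $\epsilon_{1}U^{1}_{\infty}\to\epsilon_{1}U^{1}_{0}$ is surjective, because its image is the group of universal norms and the local Artin map $U^{1}_{0}\to\Gal(F_{\infty}/F_{0})$ factors through the $\omega^{0}$‑eigenspace (as $\Gal(F_{0}/\bQp)$ acts trivially on the abelian group $\Gal(F_{\infty}/F_{0})$), so every element of $\epsilon_{1}U^{1}_{0}$ is a universal norm; projecting the two $\Lambda$‑generators to level $0$, where $\Gal(F_{\infty}/F_{0})$ acts trivially so that $\Lambda$ acts through its augmentation to $\bZp$, shows that $\bZp\oplus\mu_{p}\iso\epsilon_{1}U^{1}_{0}$ is generated over $\bZp$ by $\epsilon_{1}u_{0}(\lambda)$ and $\zeta_{0}$. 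Since $\zeta_{0}$ generates the torsion summand $\mu_{p}$, the element $\epsilon_{1}u_{0}(\lambda)$ must have unit image in $\epsilon_{1}E_{0}$, and in particular is not a $p$th root of unity.

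The hard part --- and the reason the argument has to go through the tower at all --- is the penultimate step: in the $\omega^{1}$‑eigenspace the kernel of $U^{1}_{\infty}\to E_{\infty}$ is the whole Tate module $T_{p}(\mu)$ rather than merely $\mu_{p}$, which defeats a naive level‑$0$ computation. Indeed, writing out the $\omega^{1}$‑component of $\log u_{0}(\lambda)=\log(\lambda-\zeta_{0})$ one finds the expected leading term (from $\log(1-\tfrac{\zeta_{0}-1}{\lambda-1})$) cancels modulo $p$ against the $(\zeta_{0}-1)^{p}$‑contribution, so no term‑by‑term valuation estimate suffices; only the unit Coleman power series $g_{\lambda}$, interpreted through the Coleman--Iwasawa formalism (cf. Lang, \emph{Cyclotomic Fields}, Ch. 7), makes the nonvanishing visible, and this is the point where I expect the real work to lie.
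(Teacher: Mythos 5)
The crux of your argument is the step ``$g_{\lambda}$ being a unit power series, one gets that $\epsilon_{1}(\!(u_{n}(\lambda))\!)$ maps to a generator of $\epsilon_{1}E_{\infty}$,'' and this is exactly where the proof breaks down. Being a unit of $\bZp[[T]]$ is not sufficient: the norm-coherent system $(\!(\zeta_{n})\!)$ has Coleman power series $1+T$, also a unit (and with $\phi_{1}=1\neq 0$), yet it lies in $T_{p}(\mu)=\ker(U^{1}_{\infty}\to E_{\infty})$ and so maps to $0$, not to a generator. The $\omega^{1}$ eigenspace is precisely the bad one for this kind of argument: there the kernel of $U^{1}_{\infty}\to E_{\infty}$ is all of $T_{p}(\mu)$ and the Coleman map $\oL$ has cokernel $\bZp(1)$, so neither unit-ness of the Coleman series nor nonvanishing of the Kummer homomorphism distinguishes $\epsilon_{1}(\!(u_{n}(\lambda))\!)$ from an element of $T_{p}(\mu)$. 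In fact the criterion recorded in the paper for $i=1$ (generation of $\epsilon_{1}E_{\infty}$ from $\phi_{1}(u_{0})\neq 0$) requires in addition exactly the hypothesis $\epsilon_{1}u_{0}\notin\mu_{p}$ --- i.e.\ the statement you are trying to prove --- so deducing the statement from generation of $\epsilon_{1}E_{\infty}$ without an independent proof of that generation is circular. To repair your route you would have to actually compute $\oL$ of $(\!(u_{n}(\lambda))\!)$ (the measure attached to $(1-\tfrac{\varphi}{p})\log g_{\lambda}$) and show its $\omega^{1}$-component generates the image of $\epsilon_{1}E_{\infty}$ in $\Lambda$, which is substantially more work than the statement itself. (Your reduction to nonvanishing in $\epsilon_{1}E_{0}$, the norm-coherence of $u_{n}(\lambda)$, and the identification of $g_{\lambda}$ are all fine.)

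Your closing claim that ``no term-by-term valuation estimate suffices'' is also not right, and the paper's proof is exactly such an estimate: writing $\pi=\zeta_{0}-1$, one has $\epsilon_{1}(u_{0}(\lambda))=\epsilon_{1}\bigl(1-\tfrac{\pi}{\lambda-1}\bigr)$ because the scalar $\omega(\lambda-1)^{-1}(\lambda-1)\in 1+p\bZp$ lies in the $\omega^{0}$ eigenspace, and then $\bigl(1-\tfrac{\pi}{\lambda-1}\bigr)^{p}\equiv 1-\tfrac{p\pi}{\lambda-1}\pmod{\pi^{p+1}}$, a class on which the idempotent $\epsilon_{1}$ acts trivially modulo $\pi^{p+1}$ since $\psi^{a}$ sends it to its $a$th power there; hence $\epsilon_{1}(u_{0}(\lambda))^{p}\equiv 1-\tfrac{p\pi}{\lambda-1}\not\equiv 1\pmod{\pi^{p+1}}$, so $\epsilon_{1}(u_{0}(\lambda))$ is not a $p$th root of unity. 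No passage through the tower or the Coleman formalism is needed.
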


\begin{proof}
Let $\pi=\zeta_{0}-1\in \NR_{F_{0}}$; we note $(p)=(\pi^{p-1})$.
It suffices to check that 
\[
\epsilon_{1}(u_{0}(\lambda))^{p}\not\equiv
1\pmod{\pi^{p+1}}.
\]
Since $\omega(\lambda -1)^{-1}(\lambda-1)$ is in $\bZp$, and is by
construction congruent to $1$ mod $p$ (\textit{a fortiori} congruent
to 1 mod $\pi$ in $\NR_{F_{0}}$), it is in $\epsilon_{0}U^{1}_{0}$ and $\epsilon_{1}$
takes it to the identity.  We then have
\[
\epsilon_{1}(u_{0}(\lambda))=
\epsilon_{1}\left(\frac{u_{0}(\lambda)}{\omega(\lambda -1)^{-1}(\lambda-1)}\right)
=\epsilon_{1}\left(1-\frac{\pi}{\lambda -1}\right).
\]
and 
\[
\def\mystrut{\vbox to 1.25em{\vss}}
\left(1-\frac{\pi}{\lambda -1}\right)\mystrut^{p}\equiv
1-\frac{p\pi}{\lambda -1}\pmod {\pi^{p+1}}. 
\]
Moreover, since
\[
\def\mystrut{\vbox to 1.25em{\vss}}
\psi^{a}\left(1-\frac{p\pi}{\lambda -1}\right)\equiv
1-\frac{ap\pi}{\lambda -1}
\equiv
\left(1-\frac{p\pi}{\lambda -1}\right)\mystrut^{a}
\pmod {\pi^{p+1}},
\]
it follows that
$\epsilon_{1}(u_{0}(\lambda))^{p}\equiv 1-p\pi/(1-\lambda)\not\equiv 1\pmod{\pi^{p+1}}$.
\end{proof}

Taking $\lambda$ as in Proposition~\ref{prop:LangUnits}, let
$u_{n}=\omega(\lambda -1)^{-1}(\lambda -\zeta_{n})$.  Because $\lambda
\in \mu_{p-1}$, we have $N_{F_{n+1}/F_{n}}(u_{n+1})=u_{n}$, and
the system $\epsilon_{i}(\!(u_{n})\!)$ maps to a generator of
$\epsilon_{i}E_{\infty}$.

\begin{prop}
Let $i\in 1,\dotsc,p-2$, and choose
$\lambda \in \mu_{p-1}\setminus\{1\}$ such that\break
$\phi_{i}(\omega(\lambda -1)^{-1}(\lambda -\zeta_{0}))\neq 0$.  Then
$\epsilon_{i}(\omega(\lambda -1)^{-1}(\lambda -\zeta_{n}))\in
\epsilon_{i}U^{1}_{\infty}$ maps to a generator of $\epsilon_{i}E_{\infty}$.
\end{prop}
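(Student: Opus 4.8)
The plan is to reduce everything to the criterion recorded in the discussion immediately preceding the statement: once we know that the sequence $(\!(u_{n})\!)$ with $u_{n}=\omega(\lambda-1)^{-1}(\lambda-\zeta_{n})$ genuinely defines an element of $U^{1}_{\infty}$, the hypothesis $\phi_{i}(u_{0})\neq 0$, together with the proposition immediately above (which supplies $\epsilon_{1}u_{0}\notin\mu_{p}$ in the remaining case $i=1$), yields at once that $\epsilon_{i}(\!(u_{n})\!)$ maps to a generator of $\epsilon_{i}E_{\infty}$. So the only real work is to check that the $u_{n}$ lie in $U^{1}_{n}$ and are compatible under the norm maps. (Such $\lambda$ exists by Proposition~\ref{prop:LangUnits}; note also that $\lambda\not\equiv 1\pmod p$, since distinct $(p-1)$st roots of unity remain distinct mod $p$, so that $\omega(\lambda-1)$ is defined and is a unit.)

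For membership in $U^{1}_{n}$ I would reduce modulo $\zeta_{n}-1$ in $\NR_{F_{n}}$: since $\zeta_{n}\equiv 1$ there, $\lambda-\zeta_{n}\equiv\lambda-1$, while $\omega(\lambda-1)\equiv\lambda-1\pmod p$ and hence mod $\zeta_{n}-1$ because $(p)\subseteq(\zeta_{n}-1)$; therefore $u_{n}\equiv\omega(\lambda-1)^{-1}(\lambda-1)\equiv 1\pmod{\zeta_{n}-1}$. For the norm compatibility $N_{F_{n+1}/F_{n}}(u_{n+1})=u_{n}$ — the one computational point — I would use that the Galois conjugates of $\zeta_{n+1}$ over $F_{n}$ are its $p$ distinct $p$-th roots of $\zeta_{n}$, hence exactly the roots of $T^{p}-\zeta_{n}$, so that $N_{F_{n+1}/F_{n}}(\lambda-\zeta_{n+1})=\prod_{\eta^{p}=\zeta_{n}}(\lambda-\eta)=\lambda^{p}-\zeta_{n}=\lambda-\zeta_{n}$, using $\lambda^{p}=\lambda$; and $N_{F_{n+1}/F_{n}}(\omega(\lambda-1)^{-1})=\omega(\lambda-1)^{-p}=\omega(\lambda-1)^{-1}$, again because $\omega(\lambda-1)\in\mu_{p-1}$. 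Multiplying the two gives $N_{F_{n+1}/F_{n}}(u_{n+1})=u_{n}$, so $(\!(u_{n})\!)\in U^{1}_{\infty}$.

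With $(\!(u_{n})\!)\in U^{1}_{\infty}$ established, the proof concludes by invoking the criterion: $\phi_{i}(u_{0})\neq 0$ holds by the choice of $\lambda$, and when $i=1$ the preceding proposition gives $\epsilon_{1}u_{0}\notin\mu_{p}$, so $\epsilon_{i}(\!(u_{n})\!)$ generates $\epsilon_{i}E_{\infty}$ (via $U^{1}_{\infty}\to E_{\infty}$, which is an isomorphism on the $\omega^{i}$ eigenspace for $i\neq 0,1$ and an epimorphism for $i=1$). The only step with any content is the norm computation, and even there the two ingredients — that the conjugates of $\zeta_{n+1}$ over $F_{n}$ exhaust the roots of $T^{p}-\zeta_{n}$, and that the twisting factor $\omega(\lambda-1)^{\pm 1}$ is fixed under $p$-th powering because it lies in $\mu_{p-1}$ — are elementary; everything else is bookkeeping.
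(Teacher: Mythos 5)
Your proposal is correct and takes essentially the same route as the paper: the paper's own argument is precisely the observation that $N_{F_{n+1}/F_{n}}(u_{n+1})=u_{n}$ because $\lambda\in\mu_{p-1}$, combined with the Kummer-homomorphism/Nakayama criterion stated just before (and, for $i=1$, the preceding proposition giving $\epsilon_{1}u_{0}\notin\mu_{p}$). You merely make explicit the membership $u_{n}\in U^{1}_{n}$ and the norm computation via the roots of $T^{p}-\zeta_{n}$, which the paper leaves implicit.
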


The next construction of generators, due to
Coates-Wiles~\cite[Theorem~4]{CoatesWiles-EllipticUnits}, avoids the
indefiniteness of the unspecified choice $\lambda$ in the previous
construction.  Let  
$\beta\in \bZp$ denote the unique $(p-1)$th root of $1-p$ which is
congruent to $1$ mod $p$.  Let $\Gamma$ be the unique Lubin-Tate formal
group law over $\bZp$ with $[p]_{\Gamma}[X]=X^{p}+pX$, and let
$\theta$ be the unique strict isomorphism from the multiplicative
formal group law $G_{m}$ to $\Gamma$.  (For an introduction to this
Lubin-Tate theory, see for example~\cite[8\S1]{Lang-CyclotomicIandII},
particularly Theorems~1.1 and~1.2.)  Because $\pi_{n}=\zeta_{n}-1$
satisfies 
\[
[p]_{G_{m}}(\pi_{n+1})=(\pi_{n+1}+1)^{p}-1=\pi_{n},
\]
for $n\geq 0$, $x_{n}=\theta(\pi_{n})$ satisfies
\[
x_{n+1}^{p}+px_{n+1}=x_{n}
\]
for $n\geq 0$, and $x_{0}^{p}+px_{0}=0$.  Let $u_{n}=\beta-x_{n}$.
We have $N_{F_{n+1}/F_{n}}(u_{n+1})=u_{n}$, and it is easy to
calculate $\phi_{i}(u_{0})$ as follows.  Because $\Gamma$ agrees with
the additive formal group law to order $p-1$, $\theta(X)\equiv
\log(1+X)\pmod {X^{p}}$.  Calculating
\[
D^{i}(\beta - \log(1+X))=\frac{-(i-1)!}{(\beta -\log(1+X))^{i}},
\]
we get $\phi_{i}(u_{0})=-(i-1)!\neq 0 \in \bF_{p}$.
In the case $i=1$, $\epsilon_{1}u_{0}$ is not a $p$th root of unity
(see~\cite[2.5]{Saikia-Colman}).  This proves that
$\epsilon_{i}(\!(u_{n})\!)$ maps to a generator of $\epsilon_{i}E_{\infty}$
for all $i=1,\ldots,p-2$.

\begin{prop}
Let $i\in 1,\dotsc,p-2$.  In the notation of the previous paragraph,
$\epsilon_{i}(\!(\beta -\theta(\zeta_{n}-1))\!)\in
\epsilon_{i}U^{1}_{\infty}$ maps to a generator of
$\epsilon_{i}E_{\infty}$.
\end{prop}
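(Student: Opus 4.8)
The plan is to show that $((u_n)) := ((\beta-\theta(\zeta_n-1)))$ is a well-defined element of $U^1_\infty$ and then to apply the Kummer-homomorphism generation criterion recorded just before Proposition~\ref{prop:LangUnits}, using the value $\phi_i(u_0)$ already computed in the paragraph preceding the statement, with one extra input in the case $i=1$.

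First I would check that each $u_n = \beta - x_n$ (with $x_n=\theta(\pi_n)$, $\pi_n=\zeta_n-1$) lies in $U^1_n$: since $\beta\equiv 1\pmod p$ and $\theta(X)=X+\cdots$ forces $x_n\equiv 0\pmod{\pi_n}$, we get $u_n\equiv 1\pmod{\pi_n}$, and $u_n$ is a unit because $v(\beta)=0<v(x_n)$. Next I would establish the norm compatibility $N_{F_{n+1}/F_n}(u_{n+1})=u_n$ for $n\geq 0$. Since $\theta$ intertwines $[p]_{G_m}$ and $[p]_{\Gamma}$ we have $[p]_{\Gamma}(x_{n+1})=x_n$, and $\Gal(F_{n+1}/F_n)$ acts on $x_{n+1}$ by translation by the $p$-torsion $\Gamma[p]$; hence the conjugates of $x_{n+1}$ over $F_n$ are exactly the $p$ roots $y$ of the monic polynomial $[p]_{\Gamma}(Y)-x_n = Y^p+pY-x_n$, so that $N_{F_{n+1}/F_n}(\beta-x_{n+1})=\prod_{[p]_{\Gamma}(y)=x_n}(\beta-y)=[p]_{\Gamma}(\beta)-x_n$. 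The point, and the reason $\beta$ is the $(p-1)$th root of $1-p$ rather than anything else, is that $[p]_{\Gamma}(\beta)=\beta^p+p\beta=\beta(1-p)+p\beta=\beta$, so this product is $\beta-x_n=u_n$. Thus $((u_n))\in U^1_\infty$ and $\epsilon_i((u_n))$ makes sense.

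Then I would invoke the criterion: for $((u_n))\in U^1_\infty$, if $\phi_i(u_0)\neq 0$ and, in the case $i=1$, also $\epsilon_1 u_0\notin\mu_p$, then $\epsilon_i((u_n))$ maps to a generator of the free rank one $\Lambda$-module $\epsilon_i E_\infty$ (this uses $\Lambda$ local, $\phi_i(\epsilon_i u)=\phi_i(u)$, and that $U^1_\infty\to E_\infty$ is an isomorphism, resp.\ epimorphism, on the relevant eigenspace). The value $\phi_i(u_0)=-(i-1)!\not\equiv 0\pmod p$ is the one computed above: taking $f_{u_0}(X)=\beta-\theta(X)$, whose constant term is $\beta\equiv 1\pmod p$, and using $\theta(X)\equiv\log(1+X)\pmod{X^p}$ together with the fact that $\phi_i$ depends only on $f_{u_0}$ mod $(p,X^{p-1})$, one reduces to $\log(\beta-\log(1+X))$; since $D\log(1+X)=1$ for $D=(1+X)\tfrac{d}{dX}$, an induction gives $D^i\log(\beta-\log(1+X))=-(i-1)!(\beta-\log(1+X))^{-i}$, and evaluating at $X=0$ gives $-(i-1)!\,\beta^{-i}\equiv-(i-1)!\pmod p$, which is nonzero since $1\leq i\leq p-2$. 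For $i=1$ the condition $\epsilon_1 u_0\notin\mu_p$ is exactly \cite[2.5]{Saikia-Colman}. Hence the criterion applies for every $i\in\{1,\dots,p-2\}$, which is the claim. I expect the main obstacle to be the norm-compatibility step: it is the genuine Coates--Wiles content, resting on the fixed-point identity $[p]_{\Gamma}(\beta)=\beta$ together with the translation description of the Galois action on the $x_n$; everything else is the displayed $\phi_i$-computation and the cited fact in the case $i=1$.
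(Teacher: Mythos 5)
Your argument is correct and is essentially the paper's: the paper's "proof" is the preceding paragraph, which sets up the norm-compatible system $u_n=\beta-\theta(\zeta_n-1)$, computes $\phi_i(u_0)=-(i-1)!\neq 0$ via $\theta(X)\equiv\log(1+X)\pmod{X^p}$, cites \cite[2.5]{Saikia-Colman} for $\epsilon_1 u_0\notin\mu_p$, and then applies the Kummer-homomorphism criterion stated earlier in the section. The only difference is that you spell out the norm compatibility (conjugates of $x_{n+1}$ are the roots of $Y^p+pY-x_n$, and $[p]_\Gamma(\beta)=\beta$), which the paper simply asserts with a citation to Coates--Wiles; this is a welcome but not essential addition.
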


Finally, 
Coleman~\cite{Coleman-DivisionValues,Coleman-LubinTateTowers}
constructs an extremely nice isomorphism $\epsilon_{i}E_{\infty}\to
\Lambda$ for $i\neq 0,1$.  
The Coleman map $\oL\colon U^{1}_{\infty}\to \Lambda'$ fits into an exact
sequence of $\Lambda'$-modules
\[
0\to T_{p}(\mu)\to U^{1}_{\infty}\overto{\oL}\Lambda'\to \bZp(1)\to 0
\]
(see~\cite[3.5.1]{CoatesSujatha} or \cite[Theorem~2]{Saikia-Colman}),
where $\bZp(1)$ is the $\Lambda'$-module with underlying $\bZp$-module
$\bZp$ where $\bZpt$ acts by
multiplication (it is isomorphic to $T_{p}(\mu)$ under the isomorphism
$a\mapsto (\!(\zeta_{n}^{a})\!)$).  In particular, the Coleman map is an
isomorphism on $\omega^{i}$ eigenspaces for $i\neq 1$.  Choosing a
generator of $\epsilon_{i}\Lambda'$ for $i\neq 0,1$, $\oL^{-1}$ gives
a generator of $\epsilon_{i}U^{1}_{\infty}$ which maps to a generator
of $\epsilon_{i}E_{\infty}$.  (This generator has $\phi_{i}$ equal to
$1$ by~\cite[3.5.2]{CoatesSujatha}.)


\bibliographystyle{plain}
\bibliography{bluman}

\end{document}